\documentclass[a4paper]{amsart}
\usepackage{amsmath}
\usepackage{enumerate}
\usepackage{bbm}
\usepackage{todonotes}
\usepackage[colorlinks,pdfdisplaydoctitle]{hyperref}

\makeatletter
\providecommand\@dotsep{5}
\makeatother

\newtheorem{proposition}{Proposition}
\newtheorem{corollary}[proposition]{Corollary}
\newtheorem{lemma}[proposition]{Lemma}
\newtheorem{theorem}{Theorem}
\theoremstyle{definition}
\newtheorem{definition}{Definition}
\newtheorem{remark}[proposition]{Remark}

\newcommand\strt{\protect{\,\circ}}

\DeclareMathOperator\re{Re}
\DeclareMathOperator\im{Im}

\begin{document}
\author{David Barbato, Francesco Morandin}
\title[Stochastic inviscid shell models]{Stochastic inviscid shell models:\\ well-posedness and anomalous dissipation.}
\begin{abstract}
In this paper we study a stochastic version of an inviscid shell model
of turbulence with multiplicative noise. The deterministic counterpart
of this model is quite general and includes inviscid GOY and Sabra
shell models of turbulence. We prove global weak existence and
uniqueness of solutions for any finite energy initial
condition. Moreover energy dissipation of the system is proved in
spite of its formal energy conservation.
\end{abstract}

\maketitle

\section*{Introduction}
In recent years shell models of turbulence have attracted interest for
their ability to capture some of the statistical properties and
features of three-dimensional turbulence, while presenting a structure
much simpler than Navier-Stokes and Euler equations.

The main idea behind shell models is to summarize in a unique variable
$u_n$ (usually complex-valued) all the modes with wave number $k$
inside the shell $\lambda^n<|k|<\lambda^{n+1}$. Just like
Navier-Stokes equations written in Fourier coordinates, the functions
$\{u_n\}_{n\in\mathbb N}$ satisfy an infinite system of coupled
ordinary equations, where the non-linear term is quadratic and
formally preserves energy.

Shell models are however drastic modifications of Navier-Stokes
equations. Firstly the variables $\{u_n\}_{n\in\mathbb N}$
representing three-dimensional shells (logarithmically equispaced) are
one-dimensionally indexed by $\mathbb N$. Secondly the shells are
allowed to interact only locally.  The choice to allow only
finite-range interactions is a crucial simplification both from
analytical and numerical perspective but it is well justified inside
the Kolmogorov theory of homogeneous turbulence, where one neglects
energy exchanges between modes whose wave numbers differ for more than
one order of magnitude.

These two characteristics of the shell models represent both their
weakness and their strength. The main weaknesses are the loss of the
geometry and the restriction to questions concerning the turbulence
energy cascade only. The strengths are several: from a numerical
perspective, the lower number of degrees of freedom allows for more
accurate simulations at high Reynolds numbers (although the
implementation of these simulations is not easy); from an analytical
perspective, the simpler structure of the problem leads to sharper
results both for the well-posedness and the understanding of the
anomalous scaling exponents.

A review of the subject that focuses in particular on these aspects
can be found in Biferale~\cite{Biferale2003} which is devoted to the
turbulence energy cascade and collects results concerning the
structure function $S_p(k_n)=E[|u_n|^p]$ together with numerical
evidence and analytical conjectures about anomalous exponents.

\subsection*{Main shell models}
There are several different shell models in literature. The most
studied are the GOY, introduced in Gledzer~\cite{Gledzer1973} and
Ohkitani and Yamada~\cite{OhkYam1987} and the Sabra introduced in
L'vov \emph{et al.}~\cite{sabra1998}. Then there are two models with
interactions that are somewhat simpler to study: one was introduced in
Obukhov~\cite{obukhov1971} and the other in Desnianskii and
Novikov~\cite{DesNov74} and in Katz and Pavlovi\'c~\cite{KatPav04}.
While all of the previous have variables indexed by $\mathbb N$, there
are also generalizations where the set of indexes is a regular tree
(e.g.~again in the first part of Katz and Pavlovi\'c~\cite{KatPav04}
and in Barbato \emph{et al.}~\cite{BarBiaFlaMor2013}) which is closer
to a true wavelet formulation of Navier-Stokes.

For the viscous versions of GOY and Sabra, well-posedness, global
regularity of solutions and smooth dependence on the initial data are
known (Constantin, Levant and Titi~\cite{ConLevTit2006}). On the other
hand, for the inviscid case less is known, the state of the art being
Constantin, Levant and Titi~\cite{ConLevTit2007} where the authors
prove global existence of weak solutions and, for sufficiently smooth
initial conditions, uniqueness and regularity for small times.

For the simpler shell models there are stronger results in the viscous
case (see among the others Barbato, Morandin and
Romito~\cite{BarMorRom2011} and Cheskidov and
Friedlander~\cite{CheFri09}), and even the inviscid case is understood
quite well (the main results can be found in Katz and
Pavlovi\'c~\cite{KatPav04}, Kiselev and Zlato\v s~\cite{KisZla05},
Cheskidov, Friedlander and Pavlovi\'c~\cite{CheFriPav2007}
and~\cite{CheFriPav2010}, Barbato, Flandoli and
Morandin~\cite{BarFlaMor2010CRAS} and \cite{BarFlaMor2011TAMS} and
Barbato and Morandin~\cite{BarMor2013nodea}).

Recently some stochastic shell models have been also proposed.  An
additive-noise version of the viscous GOY which is globally well-posed
was introduced in Barbato \emph{et al.}~\cite{BarBarBesFla2006}. The
existence of invariant measures was proved in Bessaih and
Ferrario~\cite{BesFer2012}. In Manna, Sritharan and
Sundar~\cite{ManSriSun2009} and Chueshov and Millet~\cite{ChuMil2009}
a small multiplicative noise version of the GOY model is studied;
well-posedness and a large deviation principle are established.

Finally, in Barbato, Flandoli and Morandin~\cite{BarFlaMor2010PAMS}
and~\cite{BarFlaMor2011AAP} a stochastic version of the inviscid
Novikov model was proposed, which is then generalized to the
tree-indexed Novikov model in Bianchi~\cite{Bianchi2013}. In these
last models the noise term is multiplicative, and it is tailored to be
formally energy-preserving. The cited papers prove global
well-posedness of weak solutions for both models and anomalous
dissipation for the former. (By anomalous dissipation we denote the
property by which the total energy of the system decreases in spite of
the formal conservativity of the dynamics.)

This type of noise is both elegant from an analytical point of view
and physically meaningful in the sense that the interactions of Euler
equations neglected in the shell models can be thought to be some sort
of residual term which would behave (statistically) in a similar way.

\subsection*{Main results of the paper}
In this paper we study a general stochastic inviscid shell model, with
a multiplicative noise term similar to the one in Barbato, Flandoli
and Morandin~\cite{BarFlaMor2010PAMS}. We restrict ourselves to
indexes in $\mathbb N$, but we allow the variables $X_n$ to be
multidimensional.

This very general system of equations, given in~\eqref{e:mainmodel},
includes a stochastic version~\eqref{e:goy_stochastic} of the inviscid
GOY model~\eqref{e:goy} and a stochastic
version~\eqref{e:sabra_stochastic} of the inviscid Sabra
model~\eqref{e:sabra}.

The noise term is formally conservative, in the sense that it acts
only on the transport of energy, without giving or taking any part of
it. One general way to obtain such a noise is the following. Suppose
that some shell model satisfies $\frac{d}{dt}u=B(u,u)$ with $Re\langle
u,B(u,v)\rangle=0$, then also the system $du=B(u,u dt+\sigma \circ
dW)$ will be formally conservative, the term $\sigma \circ dW$ being a
perturbation acting only on the transport.

The first aim of this paper is to prove that for the general
model~\eqref{e:mainmodel} there are global weak existence and
uniqueness in law of $l^2$ solutions. This result, to the knowledge of
the authors, is the first result of global well-posedness for the
inviscid GOY and Sabra models, both deterministic and stochastic.

The second important result concerns anomalous dissipation. Theorems 7
and 8 state that for both stochastic inviscid GOY and Sabra models
energy decreases with positive probability at all times, that it
becomes arbitrarily small again with positive probability, and that if
the initial energy is small enough, the solution converges to zero at
least exponentially fast almost surely and in $L^2$.

\subsection*{Strategy and organization of the paper}

Section~\ref{sec:1-model} introduces and describes the general
stochastic inviscid shell model~\eqref{e:mainmodel}. This is a subtle
matter, since the requirement that the noise acts on the transport
term in a conservative way leads to several algebraic conditions.

One of the key ideas of the paper is to use Girsanov theorem to study
the original problem through an auxiliary linear
system. Sections~\ref{sec:2-auxiliary_eq} and~\ref{sec:3-girsanov} are
devoted to establish the relation between the linear and non-linear
systems.

In Section~\ref{sec:4-uniqueness} from the linear system we deduce the
evolution equations for the second moment of components, which turn
out to form a deterministic linear system and can be conveniently
studied through the theory of q-matrices of continuous-time Markov
chains. The first consequence is uniqueness of solutions: strong for
the auxiliary linear system (Theorem~\ref{thm:uniqueness_linear}), and
in law for the main model (Theorem~\ref{thm:uniqueness_nonlinear}).

Existence of global solutions is classical and straightforward, and is
detailed in Section~\ref{sec:5-existence}. This concludes
well-posedness for the main model.

Section~\ref{sec:6-dissipation} is devoted to anomalous dissipation,
which is deduced from the behaviour of the continuous-time Markov
chain associated to the equations for the second moment of
components. In particular the chain turns out to be dishonest, in the
sense that a.s.\ it reaches infinity in finite time. This ``loss of
mass'' pulled back to the initial system becomes a loss of energy
towards higher and higher components, which is what we call anomalous
dissipation.

To formalize the link between the chain and the main model, one needs
two steps: Borel-Cantelli lemma to get a.s.\ statements about the
auxiliary linear system, and Novikov condition for Girsanov theorem to
pass from the auxiliary system to the main model.
Theorem~\ref{thm:expo_decr_nonlinear} shows that if the initial energy
is small with respect to the noise, Novikov condition holds also at
$t=\infty$, and so even the exponential decay of energy can be deduced.

Finally we need to show that indeed GOY and Sabra models can be
included in the general model~\eqref{e:mainmodel} and summarize the
results for these two models. This is done straightforwardly in
Section~\ref{sec:7-applications}.

\section{Main model and formal requirements}\label{sec:1-model}

The general model of this paper is equation~\eqref{e:mainmodel}
below. Since it is both complex and written in a synthetic but
unfamiliar way, it will be helpful to start with a particular example
to be kept in mind as a reference.

Consider on a complete filtered probability space $(\Omega, \mathcal
F_\infty, (\mathcal F_t)_{t\geq0}, P)$ the infinite system of
stochastic differential equations in Stratonovich form below
\begin{multline*}
dX_n=\lambda^{n-1}X_{n-1}^2dt-\lambda^nX_nX_{n+1}dt\\
+\lambda^{n-1}X_{n-1}\strt dW_{n-1}-\lambda^nX_{n+1}\strt dW_n
,\qquad n\geq1,
\end{multline*}
where $(W_n)_{n\geq0}$ is a sequence of independent Brownian motions
and $X_0\equiv0$. This model is a stochastic version of the inviscid
Novikov shell model, and was introduced in Barbato, Flandoli and
Morandin~\cite{BarFlaMor2010PAMS} and~\cite{BarFlaMor2011AAP}. The two
deterministic terms are coupled in such a way to cancel when we sum
$X_ndX_n$ over $n$. Apart form this, they are of the same form,
representing an interaction between $X_n$ and the product of two other
components. The two stochastic terms are coupled among themselves in
the same way, and moreover each of them is associated to one of the
deterministic terms, so that the equation rewrites
\[
dX_n=\lambda^{n-1}X_{n-1}(X_{n-1}dt+\strt dW_{n-1})
-\lambda^nX_{n+1}(X_ndt+\strt dW_n)
,\qquad n\geq1.
\]
We want to generalize this model to different types of interactions
and multidimensional structure. We will also try to keep the notation
as less cumbersome as possible, and in this view we will rewrite this
as a sum over a set of interaction terms $I$, that will include pairs
of cancelling interactions.

We are finally able to write the general model.
\begin{equation}\label{e:mainmodel}
dX_n=\sum_{i\in I}k_{i,n}B_i(X_{n+r_i},X_{n+h_i}dt+\sigma\strt dW_{i,n+h_i}),\qquad n\geq1.
\end{equation}
Here each $X_n$ is a $d$-dimesional real-valued stochastic process,
$I$ is some finite set with an even number of elements and
$(W_{i,n})_{i\in I, n\in\mathbb Z}$ is a family of $d$-dimensional
brownian motions (independent apart from some deterministic relations
explained below).  For all $n\in\mathbb Z$ and $i\in I$, $k_{i,n}$ is
a real constant, $B_i$ is a bilinear operator on $\mathbb R^d$ while
$r_i$ and $h_i$ are integer numbers.

In the example of Novikov model given above, $I$ has two elements 1
and 2, $d=1$, $B_i(a,b)=ab$ for $i=1,2$, the coefficients are given by
\[
\begin{array}{c|ccc}
i & r_i & h_i & k_{i,n} \\
\hline
1 & -1 & -1 & \lambda^{n-1} \\
2 & \phantom-1 & \phantom-0 & -\lambda^n
\end{array}
\]
and the Brownian motions are independent apart from $W_{1,n}=W_{2,n}$
a.s.\ for all $n$.

Going back to the general model, since $r_i$ and $h_i$ may be
negative, we pose $X_n=0$ for $n\leq0$ and $k_{i,n}=0$ for $i,n$ such
that $n+r_i\leq0$ or $n+h_i\leq0$. We will also require that $\overline
h:=\max_ih_i\geq0$ otherwise $X_n$ is constant for $n\leq -\overline h$.

We now list a first set of basic requirements on these models
\begin{enumerate}[\hspace{2em}\it i.]
\item \emph{Finite range:} $I$ is a finite set.
\item \emph{No self interactions:} $r_i\neq0$ for all $i\in I$.
\item \emph{Exponential coefficients:} $k_{i,n}=\lambda^nk_i$ for all
  $i\in I_n$ and $n\geq1$; here $\lambda>1$ and $k_i$ are real numbers
  and $I_n:=\{i\in I:n+r_i\geq1, n+h_i\geq1\}$. If $i\notin I_n$
  then $k_{i,n}=0$.
\end{enumerate}
The fourth but very important requirement is the \emph{formal} (also
called local) conservation of energy, which is assured by some
cancellations, as described below. The intuitive meaning of the
conditions detailed below is that $I$ must be formed by pairs
$\{i,\tilde\imath\}$ of cancelling interactions such that for all $n$
there exists $\tilde n=n+r_i$ such that
\begin{multline*}
k_{i,n}\left\langle X_n, B_i(X_{n+r_i},X_{n+h_i}dt+\sigma\strt dW_{i,n+h_i})\right\rangle\\
+k_{\tilde\imath,\tilde n}\left\langle X_{\tilde n}, B_{\tilde\imath}(X_{\tilde n+r_{\tilde\imath}},X_{\tilde n+h_{\tilde\imath}}dt+\sigma\strt dW_{\tilde\imath,\tilde n+h_{\tilde\imath}})\right\rangle=0.
\end{multline*}
Here $\langle\cdot,\cdot\rangle$ denotes the scalar product in
$\mathbb R^d$.

To make things formal and clean we need to introduce this definition.
\begin{definition}
Suppose $\tau$ is a permutation of $I$ with no fixed point and such
that $\tau=\tau^{-1}$. Let $I^*$ be any subset of $I$ such that $I$ is
the disjoint union of $I^*$ and $\tau(I^*)$.  We say that a family
of $d$-dimensional Brownian motions $W=(W_{i,n})_{i\in I, n\in\mathbb
  Z}$ is symmetric with respect to $\tau$ if the restriction of $W$ to
$I^*\times\mathbb Z$ is a family of independent Brownian motions and
$W_{\tau(i),n}=W_{i,n}$ a.s.~for all $i\in I$ and $n\in\mathbb Z$.
\end{definition}
Clearly this definition does not depend on the particular choice of
$I^*$, but nevertheless by invoking this definition, we will
implicitly suppose that we are fixing the set $I^*$.

We can now state the fourth requirement. We will use the notation
$\tilde\imath = \tau(i)$.
\begin{enumerate}[\hspace{2em}\it i.]\setcounter{enumi}{3}
\item \emph{Local conservativity:} there exists $\tau$ such that $W$
  is symmetric with respect to $\tau$ and the following relations
  hold for all $i\in I$
\begin{align}
k_{\tilde\imath}&=-k_i\lambda^{-r_i},\label{e:cancellations_k}\\
\langle u,B_{\tilde\imath}(v,w)\rangle&=\langle v,B_i(u,w)\rangle,\qquad\forall u,v,w\in\mathbb R^d,\label{e:alias_B}\\
r_{\tilde\imath}&=-r_i,\label{e:cancellations_r}\\
h_{\tilde\imath}&=h_i-r_i,\label{e:alias_h}
\end{align}
\end{enumerate}

\begin{remark}
These algebraic requirements are meaningful, in the sense that given
$I$ and $\tau$, there exist $k$, $B$, $r$ and $h$ satisfying
them. Truly, it is easy to verify that however we define these objects
on $I^*$, there is exactly one extension on all $I$ satisfying the
above conditions.
\end{remark}

The following lemma summarizes some other trivial but useful
consequences of the requirements above.

\begin{lemma}\label{le:some_alg_conseq}
Let $\varphi$ be the automorphism on $I\times\mathbb Z$ defined by
$\varphi(i,n):=(\tilde\imath,\tilde n):=(\tau(i),n+r_i)$. Then there
exists $\Delta\subset I\times\mathbb Z$ such that
$\varphi(\Delta)=\Delta^c$. Moreover  the following relations
  hold for all $i\in I$ and $n\in\mathbb Z$
\begin{align*}
k_{\tilde\imath,\tilde n}&=-k_{i,n}\\
\tilde n&=n+r_i\\
n&=\tilde n+r_{\tilde\imath}\\
W_{\tilde\imath,\tilde n+h_{\tilde\imath}}&=W_{i,n+h_i}\quad\text{a.s.}
\end{align*}
\end{lemma}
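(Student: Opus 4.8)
The plan is to show that $\varphi$ is a fixed-point-free involution of $I\times\mathbb Z$, then to read off each of the four claimed identities by direct substitution, treating the coefficient identity with a short case analysis.

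First I would verify that $\varphi$ is an automorphism. Using $r_{\tilde\imath}=-r_i$ from~\eqref{e:cancellations_r} together with $\tau=\tau^{-1}$, one computes $\varphi(\varphi(i,n))=\varphi(\tau(i),n+r_i)=(\tau(\tau(i)),\,n+r_i+r_{\tau(i)})=(i,n)$, so $\varphi\circ\varphi=\mathrm{id}$ and $\varphi$ is a bijection of $I\times\mathbb Z$; since $\tau$ has no fixed point, neither does $\varphi$. For the set $\Delta$, the cleanest choice is $\Delta:=I^*\times\mathbb Z$ with $I^*$ the set from the definition of symmetry, so that $I=I^*\sqcup\tau(I^*)$: for each fixed $i\in I^*$ the shift $n\mapsto n+r_i$ is a bijection of $\mathbb Z$, whence $\varphi(\Delta)=\tau(I^*)\times\mathbb Z=(I\setminus I^*)\times\mathbb Z=\Delta^c$. (Alternatively, any fixed-point-free involution admits such a $\Delta$, obtained by choosing one representative in each two-element orbit $\{s,\varphi(s)\}$.)

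Next come the index identities. The relation $\tilde n=n+r_i$ is nothing but the definition of $\varphi$, and then $\tilde n+r_{\tilde\imath}=(n+r_i)-r_i=n$ by~\eqref{e:cancellations_r}. For the Brownian motions I would combine~\eqref{e:alias_h} with the definition of $\tilde n$ to get $\tilde n+h_{\tilde\imath}=(n+r_i)+(h_i-r_i)=n+h_i$, and then the symmetry of $W$ with respect to $\tau$ yields $W_{\tilde\imath,\tilde n+h_{\tilde\imath}}=W_{\tau(i),\,n+h_i}=W_{i,n+h_i}$ a.s.

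The coefficient identity $k_{\tilde\imath,\tilde n}=-k_{i,n}$ is the one place where I would be careful, because $k_{j,m}$ is defined piecewise (it equals $\lambda^m k_j$ when $m\geq1$ and $j\in I_m$, i.e.\ when $m$, $m+r_j$, $m+h_j$ are all $\geq1$, and vanishes otherwise). Using the Step~3 identities, the three positivity conditions defining $\tilde\imath\in I_{\tilde n}$ with $\tilde n\geq1$ become $n+r_i\geq1$, $n\geq1$, $n+h_i\geq1$ — exactly the conditions for $i\in I_n$ with $n\geq1$; hence $k_{i,n}$ and $k_{\tilde\imath,\tilde n}$ vanish simultaneously, and in the remaining case $k_{\tilde\imath,\tilde n}=\lambda^{\tilde n}k_{\tilde\imath}=\lambda^{n+r_i}\bigl(-k_i\lambda^{-r_i}\bigr)=-\lambda^n k_i=-k_{i,n}$ by~\eqref{e:cancellations_k}, while if both vanish the identity is trivial. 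Thus the only non-automatic step is this bookkeeping — checking that $\varphi$ carries the support of the nonzero coefficients onto itself — and everything else follows by plugging the algebraic requirements i--iv and the symmetry of $W$ into the definitions.
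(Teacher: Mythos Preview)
Your argument is correct in every detail: $\varphi$ is a fixed-point-free involution for the reasons you give, the choice $\Delta=I^*\times\mathbb Z$ works, and the four identities follow by direct substitution from \eqref{e:cancellations_k}--\eqref{e:alias_h} together with the symmetry of $W$; your case analysis for the coefficient identity (checking that the supports of $k_{i,n}$ and $k_{\tilde\imath,\tilde n}$ coincide) is exactly the right bookkeeping. The paper itself states this lemma without proof, as a collection of trivial consequences of the requirements, so there is no alternative approach to compare --- you have simply written out the verification the authors omitted.
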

In particular it is now straightforward that for all $i$ and $n$,
\begin{multline}\label{e:cancellations}
k_{i,n}\left\langle X_n, B_i(X_{n+r_i},X_{n+h_i}dt+\sigma\strt dW_{i,n+h_i})\right\rangle\\
+k_{\tilde\imath,\tilde n}\left\langle X_{\tilde n}, B_{\tilde\imath}(X_{\tilde n+r_{\tilde\imath}},X_{\tilde n+h_{\tilde\imath}}dt+\sigma\strt dW_{\tilde\imath,\tilde n+h_{\tilde\imath}})\right\rangle=0
\end{multline}

Since by the Stratonovich form of It\^o formula we have
\begin{equation*}
d\langle X_n,X_n\rangle
=2\langle X_n,\strt dX_n\rangle,
\end{equation*}
if we sum formally these quantities over $n$ substituting
\eqref{e:mainmodel} and using \eqref{e:cancellations}, we have
$\sum_{n\geq1}d|X_n|^2=0$, so we may expect these models to be
conservative.

Actually, this is in general not true. Rigorous arguments in the
following sections will show that
$d\sum_{n\geq1}|X_n|^2\neq\sum_{n\geq1}d|X_n|^2$ and that
$\sum_{n\geq1}|X_n|^2$ decreases with positive probability.

\section{\texorpdfstring{It\=o}{Ito} formulation and auxiliary equation}\label{sec:2-auxiliary_eq}

We prefer to reformulate equation~\eqref{e:mainmodel} with It\=o
integration. Proposition~\ref{prop:equivalencestratito} below states
that the equivalent It\=o differential equations are the following
\begin{equation}\label{e:itodiffmodel}
dX_n=\sum_{i\in I}k_{i,n}B_i(X_{n+r_i},X_{n+h_i}dt+\sigma dW_{i,n+h_i})-\frac{\sigma^2}2\sum_{i\in I}k_{i,n}^2L_iX_ndt
,\qquad n\geq1
\end{equation}
where $L_i$ is the linear map on $\mathbb R^d$ given by
$L_i:=B_iB_i^T$. (Here $B_i$ is interpreted as a linear map from
$\mathbb R^{d^2}$ to $\mathbb R^d$.) In components,
$L_i^{\alpha,\beta}=\sum_{\gamma,\delta}B_i^{\alpha,\gamma,\delta}B_i^{\beta,\gamma,\delta}$.

We will also introduce the auxiliary \emph{linear} system of equations
and their solutions. This will be needed afterwards, for Girsanov
theorem.
\begin{equation}\label{e:itodifflinearmodel}
dX_n=\sum_{i\in I}k_{i,n}B_i(X_{n+r_i},\sigma dW_{i,n+h_i})-\frac{\sigma^2}2\sum_{i\in I}k_{i,n}^2L_iX_ndt
,\qquad n\geq1
\end{equation}

Let $H:=l^2(\mathbb R^d)$ denote the state space and $\|\cdot\|$ its
norm.

\begin{definition}
Given an initial condition $x\in H$, a weak solution of non-linear
system~\eqref{e:itodiffmodel} (respectively of linear
system~\eqref{e:itodifflinearmodel}) in $H$ is a filtered probability
space $(\Omega, \mathcal F_\infty, (\mathcal F_t)_{t\geq0}, P)$, along
with a family of Brownian motions $W$, and a stochastic process $X$
such that
\begin{enumerate}[\hspace{2em}\it i.]
\item $W=(W_{i,n})_{i\in I, n\in\mathbb Z}$, is a family of
  $d$-dimensional Brownian motions on $(\Omega, \mathcal F_\infty,
  (\mathcal F_t)_{t\geq0}, P)$ adapted to the filtration and
  symmetric with respect to $\tau$;
\item $X=(X_n)_{n\geq1}$ is an $H$-valued stochastic process on
  $(\Omega, \mathcal F_\infty, (\mathcal F_t)_{t\geq0}, P)$ with
  continuous adapted components;
\item the following integral form of non-linear
  equation~\eqref{e:itodiffmodel} holds for all $n\geq1$ and all
  $t\geq0$,
\begin{multline}\label{e:itointmodel}
X_n(t)=x_n+\sum_{i\in I}\Bigl\{\int_0^tk_{i,n}B_i(X_{n+r_i}(s),X_{n+h_i}(s))ds\\
+\int_0^t\sigma k_{i,n}B_i\bigl(X_{n+r_i}(s),dW_{i,n+h_i}(s)\bigr)-\int_0^t\frac{\sigma^2}2k_{i,n}^2L_iX_n(s)ds\Bigr\}
\end{multline}
(respectively, the following integral form of linear
equation~\eqref{e:itodifflinearmodel} holds for all $n\geq1$ and all
$t\geq0$)
\begin{equation}\label{e:itointlinearmodel}
X_n(t)=x_n+\sum_{i\in I}\Bigl\{\int_0^t\sigma k_{i,n}B_i\bigl(X_{n+r_i}(s),dW_{i,n+h_i}(s)\bigr)-\int_0^t\frac{\sigma^2}2k_{i,n}^2L_iX_n(s)ds\Bigr\}
\end{equation}
\end{enumerate}
\end{definition}

The next proposition shows that what is defined above is actually a
solution of the Stratonovich formulation of the non-linear system.

\begin{proposition}\label{prop:equivalencestratito}
If $X$ is a weak solution of the non-linear
system~\eqref{e:itodiffmodel}, the Stratonovich integrals
\begin{equation}\label{e:stratonovich_integral_term}
\int_0^t\sigma k_{i,n}B_i\bigl(X_{n+r_i}(s),\strt dW_{i,n+h_i}(s)\bigr)
\end{equation}
are well defined and equal to
\begin{equation}\label{e:strintegr1}
\int_0^t\sigma k_{i,n}B_i\bigl(X_{n+r_i}(s), dW_{i,n+h_i}(s)\bigr)
-\int_0^t\frac{\sigma^2}2k_{i,n}^2L_iX_n(s)ds
\end{equation}
Hence $X$ satisfies the Stratonovich equations \eqref{e:mainmodel}.
\end{proposition}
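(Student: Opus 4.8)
The plan is to compute the Stratonovich correction term for each integral in \eqref{e:stratonovich_integral_term} using the standard formula relating Stratonovich and It\=o integrals, namely that for continuous semimartingales $Y$ and $Z$ one has $\int_0^t Y\strt dZ = \int_0^t Y\,dZ + \tfrac12\langle Y,Z\rangle_t$, where $\langle\cdot,\cdot\rangle_t$ is the quadratic covariation. First I would fix $i\in I$ and $n\geq1$ and apply this to $Y(s)=\sigma k_{i,n}B_i(X_{n+r_i}(s),\,\cdot\,)$, viewed componentwise as a bilinear-operator-valued process paired against the driving motion, and $Z(s)=W_{i,n+h_i}(s)$. The key point is that the correction only sees the martingale part of $Y$: since $X$ is a weak solution of~\eqref{e:itodiffmodel}, equation~\eqref{e:itointmodel} gives an explicit semimartingale decomposition of each $X_m$, and only the stochastic integrals in that decomposition contribute to the covariation with $W_{i,n+h_i}$.

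Next I would carry out the covariation computation. Writing out~\eqref{e:itointmodel} for $X_{n+r_i}$, its martingale part is a sum over $j\in I$ of terms $\int_0^\cdot \sigma k_{j,n+r_i}B_j(X_{n+r_i+r_j}(s),dW_{j,n+r_i+h_j}(s))$. When we form the quadratic covariation with $W_{i,n+h_i}$, by the independence structure of the family $W$ (symmetry with respect to $\tau$: the motions are independent except that $W_{\tau(j),m}=W_{j,m}$ a.s., and componentwise each $d$-dimensional motion has independent coordinates) the only surviving contributions come from indices $(j,n+r_i+h_j)$ whose Brownian motion coincides a.s.\ with $W_{i,n+h_i}$. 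I would track these matching terms carefully, keeping the $d$-dimensional index bookkeeping explicit, so that the covariation collapses to a single drift term proportional to $\sigma^2 k_{i,n}^2\,L_i X_n(s)\,ds$ with $L_i=B_iB_i^T$ as defined after~\eqref{e:itodiffmodel}; here the structure $B_i(X_{n+r_i}(s),\cdot)$ paired with a copy of itself produces exactly the quadratic form $B_iB_i^T$ acting on $X_{n+r_i}$-indexed data, and the aliasing relations of Lemma~\ref{le:some_alg_conseq} together with requirement~iv ensure the relevant index is $n$ and the coefficient is $k_{i,n}^2$. The factor $\tfrac12$ from the Stratonovich-to-It\=o conversion yields the $\tfrac{\sigma^2}{2}$ in~\eqref{e:strintegr1}. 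Summing the resulting identity over $i\in I$ and comparing with~\eqref{e:itodiffmodel} versus~\eqref{e:mainmodel} then shows that $X$ solves the Stratonovich system~\eqref{e:mainmodel}.

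I expect the main obstacle to be the bookkeeping in the covariation step: one must show that when the martingale part of $X_{n+r_i}$ is paired against $W_{i,n+h_i}$, exactly one term in the $j$-sum contributes, and that it contributes precisely $L_i X_n$ at coefficient $k_{i,n}^2$ rather than some $L_i$-like operator paired with the wrong component or a cross term $B_iB_j^T$ with $j\neq i$. This is where the symmetry of $W$ with respect to $\tau$ and the relations~\eqref{e:cancellations_r}, \eqref{e:alias_h} of requirement~iv are essential: a term $B_j(X_{n+r_i+r_j}(s),dW_{j,n+r_i+h_j})$ aliases against $dW_{i,n+h_i}$ precisely when $j=i$ (giving the diagonal $B_iB_i^T$ directly) or when $(j, n+r_i+h_j)$ is the $\tau$-image of $(i,n+h_i)$; in the latter case $j=\tilde\imath$ and, using $r_{\tilde\imath}=-r_i$ and $h_{\tilde\imath}=h_i-r_i$, one checks the indices match and the contribution is again of the form $B_i B_i^T X_n$ after applying~\eqref{e:alias_B}. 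A secondary care point is making rigorous the claim that the Stratonovich integrals are \emph{well defined} in the first place, which amounts to noting that the integrand in~\eqref{e:stratonovich_integral_term} is a continuous semimartingale (by~\eqref{e:itointmodel} and continuity of the components) so that the Fisk--Stratonovich integral makes sense; this is routine once the semimartingale decomposition is in hand.
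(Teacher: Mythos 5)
Your overall strategy --- converting each Stratonovich integral to It\=o form componentwise, reading the quadratic covariation $[X_{n+r_i}^\beta,W_{i,n+h_i}^\gamma]$ off the semimartingale decomposition \eqref{e:itointmodel} of $X_{n+r_i}$, and using the symmetry of $W$ with respect to $\tau$ to decide which terms of the $j$-sum survive --- is exactly the paper's. But there is a genuine error in the bookkeeping step that you yourself flag as the main obstacle. You assert that the term $j=i$ aliases against $dW_{i,n+h_i}$ and ``gives the diagonal $B_iB_i^T$ directly''. It does not: the $j=i$ term in the martingale part of $X_{n+r_i}$ is driven by $W_{i,\,n+r_i+h_i}$, whose shell index differs from $n+h_i$ by $r_i\neq0$ (the no-self-interaction requirement), so this Brownian motion is \emph{independent} of $W_{i,n+h_i}$ and the $j=i$ term contributes nothing. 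The unique surviving term is $j=\tilde\imath$, because $n+r_i+h_{\tilde\imath}=n+h_i$ by \eqref{e:alias_h} and $W_{\tilde\imath,m}=W_{i,m}$ a.s.

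Relatedly, the surviving pairing is not ``$B_i(X_{n+r_i},\cdot)$ paired with a copy of itself'': it is $B_i(X_{n+r_i},\cdot)$ paired with $\sigma k_{\tilde\imath,n+r_i}B_{\tilde\imath}(X_{n+r_i+r_{\tilde\imath}},\cdot)$, and all three local-conservativity relations are needed to collapse it: $r_{\tilde\imath}=-r_i$ so that the component appearing is $X_n$ and not $X_{n+2r_i}$; $k_{\tilde\imath,n+r_i}=-k_{i,n}$, which produces both the minus sign and the factor $k_{i,n}^2$; and the adjointness \eqref{e:alias_B}, in components $B_{\tilde\imath}^{\beta,\delta,\gamma}=B_i^{\delta,\beta,\gamma}$, which turns the contraction of $B_i$ against $B_{\tilde\imath}$ into $L_i=B_iB_i^T$ acting on $X_n$. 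Had the $j=i$ term actually been the survivor, the correction would involve $X_{n+2r_i}$ contracted in the wrong slots of $B_i$ and with the wrong sign, not $-\frac{\sigma^2}{2}k_{i,n}^2L_iX_n$. The remainder of your outline (well-definedness of the Stratonovich integrals from the explicit semimartingale decomposition, the factor $\tfrac12$, and the passage from \eqref{e:itodiffmodel} back to \eqref{e:mainmodel}) matches the paper and is fine once this identification is corrected.
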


\begin{proof}
We write components explicitly, in particular
$B_i=\bigl(B_i^{\alpha,\beta,\gamma}\bigr)_{\alpha,\beta,\gamma}$ and
$L_i^{\alpha,\beta}=\sum_{\gamma,\delta}B_i^{\alpha,\gamma,\delta}B_i^{\beta,\gamma,\delta}$.
Component $\alpha$ of~\eqref{e:stratonovich_integral_term} rewrites
\begin{equation}\label{e:strat_int_term_component}
\sigma k_{i,n}\sum_{\beta,\gamma}B_i^{\alpha,\beta,\gamma}\int_0^tX_{n+r_i}^\beta(s)\strt dW_{i,n+h_i}^\gamma(s)
\end{equation}
The stochastic integral can be rewritten in It\=o form
\begin{equation}
\int_0^tX_{n+r_i}^\beta(s)\strt dW_{i,n+h_i}^\gamma(s)
=\int_0^tX_{n+r_i}^\beta(s)dW_{i,n+h_i}^\gamma(s)+\frac12\left[X_{n+r_i}^\beta,W_{i,n+h_i}^\gamma\right]_t
\end{equation}
we only need to compute the quadratic covariation term. When
$n+r_i\leq0$, this is zero, while if $n+r_i>0$, by
writing~\eqref{e:itointmodel} with $n+r_i$ in place of $n$, we find
\begin{multline}
\left[X_{n+r_i}^\beta,W_{i,n+h_i}^\gamma\right]_t\\
=\left[\sum_{j\in I}\sigma k_{j,n+r_i}\sum_{\delta,\eta}B_j^{\beta,\delta,\eta}\int_0^tX_{n+r_i+r_j}^\delta dW_{j,n+r_i+h_j}^\eta,W_{i,n+h_i}^\gamma\right]_t\\
=\sigma\sum_{j\in I}k_{j,n+r_i}\sum_{\delta,\eta}B_j^{\beta,\delta,\eta}\int_0^tX_{n+r_i+r_j}^\delta(s) \,\,d\!\left[W_{j,n+r_i+h_j}^\eta,W_{i,n+h_i}^\gamma\right]_s
\end{multline}
The very last quadratic covariation differential can be $ds$ or 0,
depending on whether the two particular BM involved are equal or
independent. They are clearly independent when
$j\not\in\{i,\tilde\imath\}$. They are independent also when $j=i$,
since $r_i\neq0$. Finally, they are a.s.~equal when
$j=\tilde\imath$ and $\eta=\gamma$ by conditions~\eqref{e:alias_h}
and~\eqref{e:cancellations}. We get
\begin{multline}
\left[X_{n+r_i}^\beta,W_{i,n+h_i}^\gamma\right]_t
=\sigma k_{\tilde\imath,n+r_i}\sum_\delta B_{\tilde\imath}^{\beta,\delta,\gamma}\int_0^tX_{n+r_i+r_{\tilde\imath}}^\delta(s)ds\\
=-\sigma k_{i,n}\sum_{\delta}B_i^{\delta,\beta,\gamma}\int_0^tX_n^\delta(s)ds
\end{multline}
where we used conditions~\eqref{e:cancellations_k},
\eqref{e:alias_B}, and~\eqref{e:cancellations_r}. Putting all
together
\begin{multline}
\left[\int_0^t\sigma k_{i,n}B_i\bigl(X_{n+r_i}(s),\strt dW_{i,n+h_i}(s)\bigr)
-\int_0^t\sigma k_{i,n}B_i\bigl(X_{n+r_i}(s), dW_{i,n+h_i}(s)\bigr)\right]^\alpha\\
=\frac\sigma2 k_{i,n}\sum_{\beta,\gamma}B_i^{\alpha,\beta,\gamma}\left[X_{n+r_i}^\beta,W_{i,n+h_i}^\gamma\right]_t
=-\frac{\sigma^2}2 k_{i,n}^2\sum_{\beta,\gamma}B_i^{\alpha,\beta,\gamma}\sum_{\delta}B_i^{\delta,\beta,\gamma}\int_0^tX_n^\delta(s)ds\\
=-\int_0^t\frac{\sigma^2}2 k_{i,n}^2\sum_{\delta}X_n^\delta(s)ds\sum_{\beta,\gamma}B_i^{\alpha,\beta,\gamma}B_i^{\delta,\beta,\gamma}
=-\left[\int_0^t\frac{\sigma^2}2 k_{i,n}^2L_iX_n(s)ds\right]^\alpha
\end{multline}
The latter is correct also when $n+r_i\leq0$, since in that case
$k_{i,n}=0$.
\end{proof}

\begin{definition}
Given an initial condition $x\in H$, an energy controlled solution of
the non-linear system~\eqref{e:itodiffmodel} or the linear
system~\eqref{e:itodifflinearmodel} is a weak solution of the same
system of equations in the class $L^\infty(\Omega\times[0,\infty);H)$. In
particular, if $\|X\|_{L^\infty}=\|x\|$, it is called a Leray solution.
\end{definition}

\section{Girsanov transformation}\label{sec:3-girsanov}

We turn our attention to the terms $X_{n+h_i}ds+\sigma dW_{i,n+h_i}$
in equation~\eqref{e:itodiffmodel}. We would like that, under a new
probability measure $Q$, these were the differentials $\sigma
dY_{i,n+h_i}$, where $Y$ is again a family of $d$-dimensional Brownian
motions symmetric with respect to $\tau$. To do so, we state an
infinite-dimensional version of Girsanov theorem whose proof can be
found in Da Prato \emph{et al.}~\cite{DaPFlaPriRoe}.

\begin{theorem}\label{thm:girsanov}
On a filtered space $(\Omega, \mathcal F_\infty, (\mathcal
F_t)_{t\geq0}, P)$, let $(W_j)_{j\in\mathbb N}$ be a sequence of
1-dimensional adapted independent Brownian motions and let
$X=(X_j)_{j\in\mathbb N}$ be a sequence of adapted semimartingales
such that $\mathbb E\sum_jX_j^2(t)<\infty$ for all $t\geq 0$.

Let $Y_j(t):=\int_0^tX_j(s)ds+W_j(t)$ for $j\in\mathbb N$.  Put, for
$0\leq t\leq\infty$
\[
M_t=\exp\biggl\{-\int_0^t\sum_jX_j(s)dW_j(s)-\frac12\int_0^t\sum_jX_j^2(s)ds\biggr\}
\]
Fix $0<T\leq\infty$. Suppose $\mathbb E[M_T]=1$, then $M$ is a closed
martingale on $[0,T]$ and the density $\frac{dQ}{dP}=M_T$ defines a
  new probability measure $Q$ on $\mathcal F_T$ under which
  $(Y_j)_{j\in\mathbb N}$ is a sequence of independent Brownian
  motions on $[0,T)$.

Moreover, to prove that $\mathbb E[M_T]=1$, Novikov condition can be used,
namely it is enough to prove that
\begin{equation}
\mathbb E\Biggl[\exp\biggl\{\frac12\int_0^T\sum_jX_j^2(s)ds\biggr\}\Biggr]<\infty
\end{equation}
\end{theorem}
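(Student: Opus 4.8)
The plan is to reduce the genuinely infinite-dimensional statement to the classical one-dimensional Girsanov and Novikov theory by bundling the countably many noise terms into a single scalar local martingale. Set $N_t := \sum_{j}\int_0^t X_j(s)\,dW_j(s)$. The first task is to make sense of this object: approximating by the partial sums $N^{(m)}_t := \sum_{j\le m}\int_0^t X_j(s)\,dW_j(s)$ and localizing by $\tau_m := \inf\{t\ge 0 : \int_0^t\sum_j X_j^2(s)\,ds \ge m\}$ (or, using $\mathbb E\sum_j X_j^2(t)<\infty$ together with Doob's inequality, by the first time $\sum_j X_j^2$ exceeds $m$), one checks that $N^{(m)}$ converges uniformly on compacts in probability to a continuous local martingale $N$ with $\langle N\rangle_t = \int_0^t\sum_j X_j^2(s)\,ds$, and, crucially, that the exponential $M_t$ in the statement is exactly the Dol\'eans--Dade exponential $\mathcal E(-N)_t = \exp\{-N_t - \tfrac12\langle N\rangle_t\}$. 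After this reduction nothing is genuinely infinite-dimensional any more.

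Granting $\mathbb E[M_T]=1$: since $M=\mathcal E(-N)\ge 0$ is always a local martingale, hence a supermartingale with $\mathbb E[M_t]\le 1$, the equality $\mathbb E[M_T]=1$ forces $M$ to be a true martingale on $[0,T]$; if $T<\infty$ it is trivially closed by $M_T$, while if $T=\infty$ the martingale convergence theorem gives $M_t\to M_\infty$ a.s., and $\mathbb E[M_\infty]=1=\mathbb E[M_0]$ upgrades this to convergence in $L^1$, so $M$ is uniformly integrable and closed by $M_\infty$ in that case too. Define $Q$ by $\frac{dQ}{dP}=M_T$ on $\mathcal F_T$. Now apply Girsanov's theorem for the scalar local martingale $-N$: for every continuous $P$-local martingale $L$, the process $L + \langle L, N\rangle$ is a $Q$-local martingale. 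Taking $L=W_j$ and using $\langle W_j, W_k\rangle_t = \delta_{jk}t$ one gets $\langle W_j, N\rangle_t = \int_0^t X_j(s)\,ds$, so $Y_j = W_j + \int_0^\cdot X_j(s)\,ds$ is a continuous $Q$-local martingale. Quadratic covariations are insensitive to the (locally) equivalent change of measure, and $Y_j$ differs from $W_j$ by a finite-variation term, so $\langle Y_j, Y_k\rangle_t = \delta_{jk}t$ under $Q$. L\'evy's characterization then identifies each $Y_j$ as a $Q$-Brownian motion, its multidimensional version gives independence of any finite subfamily, and since the law of $(Y_j)_{j\in\mathbb N}$ on $C([0,T))^{\mathbb N}$ is determined by its finite-dimensional marginals, $(Y_j)_{j\in\mathbb N}$ is a sequence of independent $Q$-Brownian motions on $[0,T)$.

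For the Novikov part the infinite-dimensionality has already been absorbed into $N$: by construction $\langle N\rangle_T = \int_0^T\sum_j X_j^2(s)\,ds$, so the hypothesis is precisely $\mathbb E[\exp\{\tfrac12\langle N\rangle_T\}]<\infty$, and the classical Novikov criterion for continuous local martingales yields that $\mathcal E(-N)$ is a uniformly integrable martingale on $[0,T]$ with $\mathbb E[\mathcal E(-N)_T]=1$, i.e.\ $\mathbb E[M_T]=1$. One can either invoke this criterion as a black box or reprove it by the standard route (a Hölder splitting applied to powers of $\mathcal E(-N)$, or a Kazamaki-type localization), but in either case the argument is verbatim the one-dimensional one.

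The main obstacle is the first step: the careful construction of $N$ as a continuous local martingale with the correct quadratic variation and the identification $M=\mathcal E(-N)$, since the hypothesis is only $\mathbb E\sum_j X_j^2(t)<\infty$ for each fixed $t$ (not, a priori, $\int_0^T\sum_j X_j^2\,ds<\infty$ a.s.), so one must justify the $L^2$/ucp convergence of the partial sums and the a.s.\ finiteness of $\langle N\rangle_t$ via localization before the exponential even makes sense. A secondary, minor point is the $T=\infty$ closedness of $M$ addressed above. Everything else --- Girsanov, L\'evy, Novikov --- is then a transcription of the one-dimensional theory, which is why the result may be quoted from Da Prato \emph{et al.}~\cite{DaPFlaPriRoe}.
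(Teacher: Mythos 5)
The paper offers no proof of this statement: Theorem~\ref{thm:girsanov} is quoted from Da Prato \emph{et al.}~\cite{DaPFlaPriRoe}, so there is no in-paper argument to compare yours against. Taken on its own terms, your proof is the standard and correct reduction: bundling the countably many noises into the single scalar continuous local martingale $N$ with $\langle N\rangle_t=\int_0^t\sum_jX_j^2(s)\,ds$, identifying $M=\mathcal E(-N)$, and then invoking one-dimensional Girsanov--Meyer, the multidimensional L\'evy characterization for independence of finite subfamilies (plus determination of the law by finite-dimensional marginals), and the classical Novikov criterion. The sign bookkeeping ($Z^{-1}\,d\langle L,Z\rangle=-d\langle L,N\rangle$, hence the corrected process $W_j+\int_0^\cdot X_j\,ds$) and the Fatou/Scheff\'e argument closing $M$ at $T=\infty$ are both right. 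The one point to tighten is the step you flag yourself: the hypothesis $\mathbb E\sum_jX_j^2(t)<\infty$ for each fixed $t$ does not by itself yield $\int_0^t\sum_jX_j^2(s)\,ds<\infty$ a.s.\ (one needs local integrability in $t$ of $\mathbb E\sum_jX_j^2(t)$ to run Tonelli, or one reads a.s.\ finiteness off the finiteness of $M_T$, resp.\ the Novikov hypothesis), and without it your localizing times $\tau_m$ need not tend to infinity; moreover $\sum_jX_j^2$ is not a submartingale, so your parenthetical Doob alternative does not apply. In the paper's actual use of the theorem this is harmless, since \eqref{e:quadr_var_Z_bound_T} provides a deterministic bound on the quadratic variation.
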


By virtue of Theorem~\ref{thm:girsanov} it is quite easy to verify
that, by changing the probability measure and the family of Brownian
motions, any Leray solution of the non-linear
system~\eqref{e:itodiffmodel} can be transformed into a Leray solution
of the auxiliary linear system~\eqref{e:itodifflinearmodel}. The
reverse is also true.

In our notation, $X$ will always denote the solution process; $P$ and
$Q$ will denote the probability measures for the non-linear and linear
systems respectively; $P_T$ and $Q_T$ their restrictions to $\mathcal
F_T$; $W$ and $Y$ will denote the two associated families of Brownian
motions.

The relation between $W$ and $Y$ is ensured by the following
definition. For $i\in I$, $n\in\mathbb N$ and $t\geq0$, let
\begin{equation}\label{e:intY}
Y_{i,n}(t)=\int_0^t\frac1\sigma X_n(s)ds+W_{i,n}(t)
\end{equation}
Suppose we can define the two martingales
\begin{align}\label{e:defZtandtilde}
Z_t&=-\int_0^t\sum_{\substack{i\in I^*\\n\in\mathbb N}}\bigl\langle\sigma^{-1}X_{n+h_i}(s),dW_{i,n+h_i}(s)\bigr\rangle, \\
\tilde Z_t&=\int_0^t\sum_{\substack{i\in I^*\\n\in\mathbb N}}\bigl\langle\sigma^{-1}X_{n+h_i}(s),dY_{i,n+h_i}(s)\bigr\rangle
\end{align}
Then it easy to verify that
\[
\tilde Z_t
=-Z_t+\frac1{\sigma^2}\int_0^t\sum_{\substack{i\in I^*\\n\in\mathbb N}}|X_{n+h_i}(s)|^2ds
=-Z_t+[Z,Z]_t
=-Z_t+[\tilde Z,\tilde Z]_t
\]
so that $Z_t-\frac12[Z,Z]_t=-\tilde Z_t+\frac12[\tilde Z,\tilde Z]_t$.
We will then pose
\begin{align}\label{e:QPdensity}
\frac{dQ_t}{dP_t}
&=\exp\{Z_t-\frac12[Z,Z]_t\}, &
\frac{dP_t}{dQ_t}
&=\exp\{\tilde Z_t-\frac12[\tilde Z,\tilde Z]_t\} &
\end{align}
We are now ready to make a precise statement
\begin{proposition}\label{prop:nonlinearsol_solveslinear}
Suppose $(\Omega, \mathcal F_\infty, (\mathcal F_t)_{t\geq0}, P, W,
X)$ is a Leray solution of the non-linear
system~\eqref{e:itodiffmodel}. Fix any $0<T<\infty$.  Let $Q_T$ be the
measure on $\mathcal F_T$ defined by~\eqref{e:QPdensity} and let $Y$
be the family of Brownian motions defined by~\eqref{e:intY}.

Then $Q_T$ is a probability measure and $(\Omega, \mathcal F_T,
(\mathcal F_t)_{0\leq t\leq T}, Q_T, Y, X)$ is a Leray solution of the
linear system~\eqref{e:itodifflinearmodel} on $[0,T]$.
\end{proposition}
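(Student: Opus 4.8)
The plan is to read the statement off the infinite-dimensional Girsanov theorem (Theorem~\ref{thm:girsanov}), the only preparatory work being to present the $\tau$-symmetric, $d$-dimensional, $\mathbb N$-indexed family $W$ as a single $\mathbb N$-indexed family of scalar Brownian motions, as that theorem requires. First I would relabel by $j\in\mathbb N$ the scalar components of the $W_{i,\cdot}$, $i\in I^*$, occurring in $Z_t$, writing $W_j$ for the $j$-th of these (say $W_j=W_{i,m}^\alpha$) and attaching to it the drift $\beta_j:=\sigma^{-1}X_m^\alpha$, and then check that the exponential $M_t$ built from $(\beta_j,W_j)_j$ in Theorem~\ref{thm:girsanov} equals $\exp\{Z_t-\frac12[Z,Z]_t\}=\frac{dQ_t}{dP_t}$, while the shifted process $Y_j(t)=\int_0^t\beta_j\,ds+W_j(t)$ reproduces~\eqref{e:intY} componentwise. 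Each $\beta_j$ is an adapted semimartingale because $X$ solves the It\=o equation~\eqref{e:itointmodel}. The decisive point is that, $X$ being a \emph{Leray} solution, $\sum_j\beta_j^2(s)\le\sigma^{-2}|I^*|\,\|X(s)\|^2\le\sigma^{-2}|I^*|\,\|x\|^2$ for all $s$; hence $\mathbb E\bigl[\sum_j\beta_j^2(t)\bigr]<\infty$ for all $t$, and on the finite interval $[0,T]$ the Novikov quantity $\frac12\int_0^T\sum_j\beta_j^2\,ds\le\frac12\sigma^{-2}|I^*|\,\|x\|^2\,T$ is bounded, so $\mathbb E[M_T]=1$. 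Theorem~\ref{thm:girsanov} then yields at once that $Q_T$ is a probability measure and that the $Y_j$ form a family of independent Brownian motions on $[0,T)$ under $Q_T$.

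Granting this, reassembling the scalar components shows that $(Y_{i,n})_{i\in I^*,\,n\in\mathbb Z}$ is a family of independent $d$-dimensional $Q_T$-Brownian motions; extending it by $Y_{\tau(i),n}:=Y_{i,n}$, which is forced by~\eqref{e:intY} together with the $\tau$-symmetry $W_{\tau(i),n}=W_{i,n}$ of $W$, yields a family $Y$ that is adapted and symmetric with respect to $\tau$, which is the first of the requirements for $(\Omega,\mathcal F_T,(\mathcal F_t)_{0\le t\le T},Q_T,Y,X)$ to be a weak solution of the linear system. The second requirement is inherited unchanged from the hypothesis, $X$ being literally the same process with the same continuous adapted components; moreover, since $M_T>0$ the measures $P_T$ and $Q_T$ are equivalent, so $X\in L^\infty(\Omega\times[0,T];H)$ is preserved and its essential supremum is again $\|x\|$ (the bound $\le\|x\|$ is a $P_T$-a.s.\ hence $Q_T$-a.s.\ statement, while $\ge\|x\|$ follows from $X(0)=x$ and the lower semicontinuity of $s\mapsto\|X(s)\|$ at $s=0$). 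Hence $X$ is again a Leray solution.

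It remains to verify the integral equation~\eqref{e:itointlinearmodel}, and here the change of measure plays no role at all. From~\eqref{e:intY} one has the pathwise identity $X_{n+h_i}\,ds+\sigma\,dW_{i,n+h_i}=\sigma\,dY_{i,n+h_i}$, so the bilinearity of $B_i$ gives, for each $i$ and $n$,
\[
k_{i,n}B_i\bigl(X_{n+r_i},X_{n+h_i}\bigr)\,ds+\sigma k_{i,n}B_i\bigl(X_{n+r_i},dW_{i,n+h_i}\bigr)=\sigma k_{i,n}B_i\bigl(X_{n+r_i},dY_{i,n+h_i}\bigr).
\]
Substituting this identity into~\eqref{e:itointmodel}, which holds under $P$, makes the deterministic transport term merge into the noise term and turns the equation into~\eqref{e:itointlinearmodel} term by term, the contributions with $n+r_i\le0$ or $n+h_i\le0$ vanishing by the conventions $X_m=0$, $k_{i,n}=0$. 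Since $P\sim Q_T$ the stochastic integrals appearing are unchanged up to a.s.\ equality under the change of measure, so the identity persists under $Q_T$, where now $Y_{i,n+h_i}$ is a genuine Brownian motion; this is the third requirement, and the proof is complete.

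I expect the only real care to lie in the first paragraph --- the passage to the scalar $\mathbb N$-indexed setting of Theorem~\ref{thm:girsanov} and the matching of the resulting exponential with the prescribed density $\exp\{Z_t-\frac12[Z,Z]_t\}$; by contrast the Novikov condition, usually the crux of such arguments, is here immediate from the uniform energy bound built into the notion of a Leray solution, and the passage from the nonlinear to the linear equation is a purely algebraic substitution using the bilinearity of the $B_i$ and the definition~\eqref{e:intY} of $Y$.
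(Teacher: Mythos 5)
Your proposal is correct and follows essentially the same route as the paper: the paper likewise applies Theorem~\ref{thm:girsanov} to the scalar components $\frac1\sigma X_{n+h_i}^j$ and $W_{i,n+h_i}^j$ indexed over $i\in I^*$, $n\in\mathbb N$, $1\leq j\leq d$, verifies Novikov on $[0,T]$ from the uniform Leray bound $\sum_{i\in I^*}\sum_{n\geq1}|X_{n+h_i}|^2\leq|I^*|\|x\|^2$, checks the $\tau$-symmetry of $Y$, and observes that~\eqref{e:intY} turns~\eqref{e:itointmodel} into~\eqref{e:itointlinearmodel} with $W$ replaced by $Y$. The extra details you supply (matching $M_t$ with $\exp\{Z_t-\frac12[Z,Z]_t\}$, persistence of the Leray property under the equivalent measure) are consistent with what the paper leaves implicit.
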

\begin{proof}
By~\eqref{e:intY}, equation~\eqref{e:itointmodel} is equivalent to
equation~\eqref{e:itointlinearmodel} with $W$ replaced by $Y$. We need
to prove that $Q_T$ is a probability measure and that $Y$ is a family
of Brownian motions symmetric w.r.t.~$\tau$, hence we apply
Theorem~\ref{thm:girsanov}.

The sequences we use are $\frac1\sigma X_{n+h_i}^j$ and
$W_{i,n+h_i}^j$, both for $1\leq j\leq d$, $n\in\mathbb N$ and $i\in
I^*$. (Notice that we use $I^*$ instead of $I$ since we need the
independence of the Brownian motions.)

By Leray property and the finiteness of $I^*$, we get a very strong
bound
\begin{equation}
\sum_{i\in I^*}\sum_{n\geq1}|X_{n+h_i}|^2(t)
\leq|I^*|\|x\|^2\qquad\text{a.s.~for all }t>0
\end{equation}
by which we immediately deduce both that $Z_t$
in~\eqref{e:defZtandtilde} is well-defined and, by the finiteness of
$T$, that
\begin{equation}\label{e:quadr_var_Z_bound_T}
[Z,Z]_T
=[\tilde Z,\tilde Z]_T
=\int_0^T\sum_{i\in I^*}\sum_{n\geq1}\frac1{\sigma^2}|X_{n+h_i}|^2(s)ds
\leq\frac{|I^*|\|x\|^2T}{\sigma^2}
,\qquad{a.s.}
\end{equation}
hence Novikov condition holds, namely
\begin{equation}\label{e:novikov_T}
\mathbb E\bigl[e^{\frac12[Z,Z]_T}\bigr]
=\mathbb E\bigl[e^{\frac12[\tilde Z,\tilde Z]_T}\bigr]
\leq \exp\frac{|I^*|\|x\|^2T}{2\sigma^2}
<\infty
\end{equation}
Finally, $Y$ is symmetric w.r.t.~$\tau$ since both $P$-a.s.\ and
$Q_T$-a.s.
\begin{equation}
Y_{\tilde\imath,n}(t)
=\int_0^t\frac1\sigma X_n(s)ds+W_{\tilde\imath,n}(t)
=\int_0^t\frac1\sigma X_n(s)ds+W_{i,n}(t)
=Y_{\tilde\imath,n}(t)\qedhere
\end{equation}
\end{proof}
The converse is also true. We give it without proof since it is almost
identical to the one above.
\begin{proposition}\label{prop:linearsol_solvesnonlinear}
Suppose $(\Omega, \mathcal F_\infty, (\mathcal F_t)_{t\geq0}, Q, Y,
X)$ is a Leray solution of the linear
system~\eqref{e:itodifflinearmodel}. Fix any $0<T<\infty$.  Let $P_T$
be the measure on $\mathcal F_T$ defined by the second one of~\eqref{e:QPdensity} and let $W$ be the family of Brownian motions
defined by~\eqref{e:intY}.

Then $P_T$ is a probability measure and $(\Omega, \mathcal F_T,
(\mathcal F_t)_{0\leq t\leq T}, P_T, W, X)$ is a Leray solution of the
non-linear system~\eqref{e:itodiffmodel} on $[0,T]$.
\end{proposition}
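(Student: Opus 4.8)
The plan is to follow the structure of the proof of Proposition~\ref{prop:nonlinearsol_solveslinear} line by line, swapping the roles of the two systems and of the two families of Brownian motions. First I would observe that, by the defining relation~\eqref{e:intY}, rewriting~\eqref{e:itointlinearmodel} with $Y$ replaced by $W$ reproduces exactly~\eqref{e:itointmodel}: indeed substituting $dY_{i,n+h_i}=\sigma^{-1}X_{n+h_i}\,ds+dW_{i,n+h_i}$ into $\sigma k_{i,n}B_i(X_{n+r_i},dY_{i,n+h_i})$ splits off precisely the deterministic transport term $k_{i,n}B_i(X_{n+r_i},X_{n+h_i})\,ds$. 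So the process $X$, which already solves the linear system driven by $Y$, automatically satisfies the integral equation~\eqref{e:itointmodel} once we know $W$ is an admissible family of Brownian motions under $P_T$.

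Next I would verify the hypotheses needed to apply Girsanov's theorem (Theorem~\ref{thm:girsanov}) in the reverse direction. As in the direct proof, the sequences to feed into the theorem are $\sigma^{-1}X_{n+h_i}^j$ and $Y_{i,n+h_i}^j$ for $i\in I^*$, $n\in\mathbb N$, $1\leq j\leq d$, using $I^*$ rather than $I$ so that the driving Brownian motions are genuinely independent. The Leray property $\|X\|_{L^\infty(\Omega\times[0,\infty);H)}=\|x\|$ together with the finiteness of $I^*$ gives $\sum_{i\in I^*}\sum_{n\geq1}|X_{n+h_i}|^2(t)\leq|I^*|\|x\|^2$ a.s.\ for all $t$, so $\tilde Z_t$ in~\eqref{e:defZtandtilde} is well-defined and, since $T<\infty$, its quadratic variation is bounded a.s.\ by $|I^*|\|x\|^2T/\sigma^2$. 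Hence $\mathbb E_Q\bigl[e^{\frac12[\tilde Z,\tilde Z]_T}\bigr]\leq\exp\{|I^*|\|x\|^2T/(2\sigma^2)\}<\infty$, Novikov's condition holds, $\mathbb E_Q[\tilde M_T]=1$, and the density $dP_T/dQ_T=\exp\{\tilde Z_t-\frac12[\tilde Z,\tilde Z]_t\}$ from the second identity in~\eqref{e:QPdensity} defines a probability measure under which the processes $Y_{i,n+h_i}^j-\int_0^t\sigma^{-1}X_{n+h_i}^j\,ds=W_{i,n+h_i}^j$ are independent Brownian motions on $[0,T)$. Finally, symmetry of $W$ with respect to $\tau$ is immediate from~\eqref{e:intY} and the symmetry of $Y$: $W_{\tilde\imath,n}=Y_{\tilde\imath,n}-\int_0^t\sigma^{-1}X_n\,ds=Y_{i,n}-\int_0^t\sigma^{-1}X_n\,ds=W_{i,n}$ both $Q$-a.s.\ and $P_T$-a.s.

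I expect no real obstacle here: the only point requiring a little care is bookkeeping the sign conventions so that the density appearing in~\eqref{e:QPdensity} is exactly the one Girsanov's theorem produces when applied to $\tilde Z$ rather than $Z$, and checking that the continuity-and-adaptedness requirement (ii) in the definition of weak solution is inherited unchanged since the measure change is absolutely continuous on $\mathcal F_T$ and $X$ is the same process. For these reasons the authors state the result without proof, and the argument above is indeed a transcription of the preceding one with $(P,W)\leftrightarrow(Q,Y)$ and $Z\leftrightarrow\tilde Z$.
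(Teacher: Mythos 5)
Your proposal is correct and is exactly the argument the paper has in mind: the authors omit the proof precisely because it is the proof of Proposition~\ref{prop:nonlinearsol_solveslinear} with the roles of $(P,W,Z)$ and $(Q,Y,\tilde Z)$ interchanged, which is what you have written out, including the correct identification of the Girsanov density $\exp\{\tilde Z_t-\frac12[\tilde Z,\tilde Z]_t\}$ and the Novikov bound from the Leray property. No gaps.
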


\begin{remark}\label{rem:P_Q_equivalence}
By Carath\'eodory theorem, the family of probability measures
$(Q_T)_{T\geq0}$ extends in a unique way to some probability measure
$Q$ on $\mathcal F_\infty$ and the same stands for $(P_T)_{T\geq0}$.
Hence solutions of non-linear and linear systems are associated also on
infinite time span. From now on we will drop the $_T$ and use the
symbols $P$ and $Q$ with this meaning. One should anyway keep in mind
that while $P_T$ and $Q_T$ (and hence $P$ and $Q$) are equivalent on
$\mathcal F_T$ for any finite $T$, they are not in general equivalent
on $\mathcal F_\infty$.
\end{remark}

\section{Closed equation for \texorpdfstring{$\mathbb E^Q\left[|X_n(t)|^2\right]$}{second moments} and uniqueness}\label{sec:4-uniqueness}

Denote by $\mathbb E^Q$ the mathematical expectation on $(\Omega,
\mathcal F, Q)$. It turns out that if $L_i$ is the identity for all
$i\in I$, then $\mathbb E^Q\left[|X_n(t)|^2\right]$ satisfies a closed
linear differential equation which will shed new light on the
behaviour of solutions, in particular by giving an easy way to prove
uniqueness.
\begin{proposition}
Suppose $(\Omega, \mathcal F_\infty, (\mathcal F_t)_{t\geq0}, Q, Y,
X)$ is an energy controlled solution of the linear
system~\eqref{e:itodifflinearmodel}.

Then for all $n\geq1$ and $t\geq0$
\begin{equation}\label{e:diff2ndmomentli}
\frac d{dt}\mathbb E^Q\left[|X_n|^2\right]
=-\sum_{i\in I}\sigma^2k_{i,n}^2\mathbb E^Q\bigl\langle X_n,L_iX_n\bigr\rangle 
+\sum_{i\in I}\sigma^2k_{i,n}^2\mathbb E^Q\langle X_{n+r_i},L_{\tilde\imath}X_{n+r_i}\rangle
\end{equation}
\end{proposition}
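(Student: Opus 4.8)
The plan is to apply It\^o's formula to $|X_n|^2=\langle X_n,X_n\rangle$ componentwise, starting from the integral form of the linear equation~\eqref{e:itodifflinearmodel} (with driving family $Y$), then to take $\mathbb E^Q$ and differentiate in $t$. Fix $n\geq1$ and split the dynamics of $X_n$ into its local martingale and finite-variation parts,
\[
dX_n=dM_n-\frac{\sigma^2}2\sum_{i\in I}k_{i,n}^2L_iX_n\,dt,\qquad M_n(t):=\sum_{i\in I}\sigma k_{i,n}\int_0^tB_i\bigl(X_{n+r_i}(s),dY_{i,n+h_i}(s)\bigr).
\]
Then It\^o gives $d|X_n|^2=2\langle X_n,dM_n\rangle-\sigma^2\sum_{i\in I}k_{i,n}^2\langle X_n,L_iX_n\rangle\,dt+d[M_n,M_n]$, where $[M_n,M_n]:=\sum_{\alpha}[M_n^\alpha,M_n^\alpha]$. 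The drift already reproduces the first sum in~\eqref{e:diff2ndmomentli}; what remains is to identify the quadratic variation with the second sum and to kill $\int_0^{\,\cdot}\langle X_n,dM_n\rangle$ under $\mathbb E^Q$.

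For the quadratic variation I would pass to components. By the $\tau$-symmetry of $Y$ the only identifications among its members are $Y_{\tilde\imath,m}=Y_{i,m}$ a.s.; hence, among the (finitely many) Brownian motions $\{Y_{i,n+h_i}:i\in I,\ k_{i,n}\neq0\}$ that actually drive $M_n$, no two coincide, because $Y_{i,n+h_i}=Y_{\tilde\imath,n+h_i}$ would force $h_{\tilde\imath}=h_i$, impossible since $h_{\tilde\imath}=h_i-r_i$ by~\eqref{e:alias_h} and $r_i\neq0$ (no self interactions). Together with the mutual independence of the scalar components of each $d$-dimensional Brownian motion, this makes $d[M_n,M_n]$ diagonal both in $i$ and in the noise-component index, so
\[
d[M_n,M_n]=\sum_{i\in I}\sigma^2k_{i,n}^2\sum_{\alpha,\beta,\gamma,\delta}B_i^{\alpha,\beta,\gamma}B_i^{\alpha,\delta,\gamma}X_{n+r_i}^\beta X_{n+r_i}^\delta\,dt.
\]
Finally~\eqref{e:alias_B} translates into $B_{\tilde\imath}^{\alpha,\beta,\gamma}=B_i^{\beta,\alpha,\gamma}$, whence $\sum_{\alpha,\gamma}B_i^{\alpha,\beta,\gamma}B_i^{\alpha,\delta,\gamma}=\sum_{\alpha,\gamma}B_{\tilde\imath}^{\beta,\alpha,\gamma}B_{\tilde\imath}^{\delta,\alpha,\gamma}=L_{\tilde\imath}^{\beta,\delta}$, and therefore $d[M_n,M_n]=\sum_{i\in I}\sigma^2k_{i,n}^2\langle X_{n+r_i},L_{\tilde\imath}X_{n+r_i}\rangle\,dt$ (terms with $n+r_i\leq0$ being harmless, since then $X_{n+r_i}=0$ and $k_{i,n}=0$).

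It remains to integrate on $[0,t]$, take $\mathbb E^Q$, and differentiate. Since the solution is energy controlled there is a deterministic $C$ with $\|X(s)\|\leq C$ for a.e.\ $(\omega,s)$; thus the integrand of $M_n$ is bounded, $M_n$ and $\int_0^{\,\cdot}\langle X_n,dM_n\rangle$ are genuine $L^2$ martingales, so the latter has zero $\mathbb E^Q$. The same bound justifies Fubini, giving
\[
\mathbb E^Q\bigl[|X_n(t)|^2\bigr]=|x_n|^2+\int_0^t\Bigl(-\sum_{i\in I}\sigma^2k_{i,n}^2\mathbb E^Q\langle X_n,L_iX_n\rangle+\sum_{i\in I}\sigma^2k_{i,n}^2\mathbb E^Q\langle X_{n+r_i},L_{\tilde\imath}X_{n+r_i}\rangle\Bigr)\,ds,
\]
and continuity of the paths of $X$ together with dominated convergence makes the integrand continuous in $s$; the fundamental theorem of calculus then yields both the differentiability of $t\mapsto\mathbb E^Q[|X_n(t)|^2]$ and exactly~\eqref{e:diff2ndmomentli}.

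The step I expect to be the main obstacle is the bookkeeping inside the quadratic variation: one has to determine precisely which of the Brownian motions driving $X_n$ coincide (this is where $r_i\neq0$ and the $\tau$-symmetry of the noise enter) and then convert the self-contraction of $B_i$ into $L_{\tilde\imath}$ via the alias identity~\eqref{e:alias_B}. Once the energy-controlled $L^\infty$ bound is in hand, the martingale property and the passage from the integral identity to the differential equation are routine.
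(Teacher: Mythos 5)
Your proposal is correct and follows essentially the same route as the paper: It\^o's formula for $|X_n|^2$, the observation that the driving Brownian motions $Y_{i,n+h_i}$ are pairwise distinct because $h_{\tilde\imath}=h_i-r_i\neq h_i$, the conversion of the self-contraction of $B_i$ into $L_{\tilde\imath}$ via~\eqref{e:alias_B}, and the energy-controlled bound to make the stochastic-integral term a true martingale with vanishing expectation. No gaps.
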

\begin{proof}
We start by computing the quadratic variation of $X_n$. We
use~\eqref{e:itointlinearmodel} and the independence of $Y_{i,n+h_i}$
and $Y_{j,n+h_j}$ when $j\neq i$. (If $j=\tilde\imath$, then
$n+h_j=n+h_i-r_i\neq n+h_i$.)
\begin{multline*}
\frac{d[X_n,X_n]_t}{\sigma^2}
=\sum_{i\in I}\sum_{j\in I}k_{i,n}k_{j,n}d\biggl[\int_0^tB_i(X_{n+r_i},dY_{i,n+h_i}),\int_0^tB_j(X_{n+r_j},dY_{j,n+h_j})\biggr]_t\\
=\sum_{i\in I}k_{i,n}^2d\biggl[\int_0^tB_i(X_{n+r_i},dY_{i,n+h_i}),\int_0^tB_i(X_{n+r_i},dY_{i,n+h_i})\biggr]_t\\
=\sum_{i\in I}k_{i,n}^2\sum_{\alpha,\beta,\gamma,\delta,\epsilon}B_i^{\alpha,\beta,\gamma}B_i^{\alpha,\delta,\epsilon}X_{n+r_i}^\beta X_{n+r_i}^\delta d\bigl[Y_{i,n+h_i}^\gamma,Y_{i,n+h_i}^\epsilon\bigr]_t\\
=\sum_{i\in I}k_{i,n}^2\sum_{\beta,\delta}\sum_{\alpha,\gamma}B_{\tilde\imath}^{\beta,\alpha,\gamma}B_{\tilde\imath}^{\delta,\alpha,\gamma}X_{n+r_i}^\beta X_{n+r_i}^\delta dt\\
=\sum_{i\in I}k_{i,n}^2\langle X_{n+r_i},L_{\tilde\imath}X_{n+r_i}\rangle dt
\end{multline*}
We also used~\eqref{e:alias_B} and the definition of
$L_i^{\alpha,\beta}=\sum_{\gamma,\delta}B_i^{\alpha,\gamma,\delta}B_i^{\beta,\gamma,\delta}$.

Now we are able to compute the differential of $|X_n(t)|^2$
\begin{multline}\label{e:differentialXnsquare}
d(|X_n(t)|^2)
=2\langle X_n(t),dX_n(t)\rangle+d[X_n,X_n]_t\\
=2\sum_{i\in I}\sigma k_{i,n}\bigl\langle X_n,B_i(X_{n+r_i},dY_{i,n+h_i})\bigr\rangle
-\sum_{i\in I}\sigma^2k_{i,n}^2\bigl\langle X_n,L_iX_n\bigr\rangle dt+\\
+\sum_{i\in I}\sigma^2k_{i,n}^2\langle X_{n+r_i},L_{\tilde\imath}X_{n+r_i}\rangle dt
\end{multline}
By the definition of energy controlled solution, $|X_n(t)|\leq
\|X(t)\|\leq C$ almost surely, so in particular $\mathbb
E^Q\int_0^t|X_n|^2(s)|X_{n+r_i}|^2(s)ds<\infty$ and hence the first
term above is a martingale, with mathematical expectation zero. If we
now take the mathematical expectation of the integral form
of~\eqref{e:differentialXnsquare} we get the integral form
of~\eqref{e:diff2ndmomentli} and we are finished.
\end{proof}
When all the $L_i$ are the identity,
equation~\eqref{e:diff2ndmomentli} becomes a linear closed
differential equation
\begin{equation}\label{e:diff2ndmomentclosed}
\frac d{dt}\mathbb E^Q\left[|X_n|^2\right]
=-\sum_{i\in I}\sigma^2k_{i,n}^2 \mathbb E^Q\left[|X_n|^2\right]
+\sum_{i\in I}\sigma^2k_{i,n}^2 \mathbb E^Q\left[|X_{n+r_i}|^2\right]
\end{equation}
We notice that this system of equations is of a peculiar kind, with
negative diagonal and non-negative off-diagonal entries, thus
suggesting a connection to the Kolmogorov equations for
continuous-time Markov chains on the positive integers. We investigate
this relation presently.

Denote by $\Pi=(\pi_{m,n})_{m,n\geq1}$ the infinite matrix associated
to this system: for $n,m\geq1$ and $m\neq n$ let
\begin{equation}\label{e:def_Pi}
\begin{cases}
\pi_{n,m}
:=\displaystyle\sum_{\substack{i\in I\\r_i=m-n}}\sigma^2k_{i,n}^2
=\sigma^2\lambda^{2n}\displaystyle\sum_{\substack{i\in I_n\\r_i=m-n}}k_i^2\\[5ex]
\pi_{n,n}
:=-\displaystyle\sum_{i\in I}\sigma^2k_{i,n}^2
=-\sigma^2\lambda^{2n}\displaystyle\sum_{i\in I_n}k_i^2
=:-\pi_n
\end{cases}
\end{equation}
\begin{remark}\label{rem:I_n_stable}
Recall that for $n\geq1$, $I_n:=\{i\in I:n+r_i\geq1, n+h_i\geq1\}$, so
that 
\[
I_1\subset I_2\subset\dots\subset I_{n_0}=I_{n_0+1}=\dots=I.
\]
where $n_0\geq1-\min\{r_i,h_i:i\in I\}$. Hence for example
$\pi_n=O(\lambda^{2n})$ as $n\to\infty$.
\end{remark}

\begin{corollary}\label{cor:2ndQmomentsolvesforward}
Suppose that $(Q,X)$ is an energy controlled solution of
equation~\eqref{e:itodifflinearmodel} and that all the $L_i$ are the
identity. Then $u=(u_n)_{n\geq1}$ defined by $u_n(t)=\mathbb
E^Q\left[|X_n(t)|^2\right]$ is a non-negative solution of the Cauchy
problem
\begin{equation}\label{e:2ndmom_forward}
\begin{cases}
u'=u\Pi \\
u_n(0)=|x_n|^2 & n\geq1\\
\end{cases}
\end{equation}
in the class $L^\infty\bigl([0,\infty);l^1\bigr)$.
\end{corollary}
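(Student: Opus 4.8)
The plan is to verify directly that $u_n(t)=\mathbb E^Q[|X_n(t)|^2]$ has the three claimed properties: it lies in $L^\infty([0,\infty);l^1)$, it is non-negative, and it solves the Cauchy problem \eqref{e:2ndmom_forward}. Non-negativity is immediate since each $u_n(t)$ is the expectation of a non-negative random variable. The bound in $L^\infty([0,\infty);l^1)$ comes from the energy-controlled hypothesis: there is a constant $C$ with $\|X(t)\|\le C$ almost surely for all $t$, hence $\sum_{n\ge1}u_n(t)=\mathbb E^Q\sum_{n\ge1}|X_n(t)|^2\le C^2$ for every $t$, which is exactly membership in $L^\infty([0,\infty);l^1)$. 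The initial condition $u_n(0)=|x_n|^2$ is just the statement that $X_n(0)=x_n$ deterministically.

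The substance is showing $u'=u\Pi$. Here I would invoke the previous proposition: under the assumption that every $L_i$ is the identity, \eqref{e:diff2ndmomentli} reduces to the closed form \eqref{e:diff2ndmomentclosed}, which in the integral form reads
\[
u_n(t)=u_n(0)+\int_0^t\Bigl(-\pi_n u_n(s)+\sum_{\substack{i\in I\\ r_i\ne0}}\sigma^2k_{i,n}^2\,u_{n+r_i}(s)\Bigr)ds.
\]
Matching this against the definition \eqref{e:def_Pi} of the matrix $\Pi$, the right-hand side integrand is precisely $\sum_{m\ge1}u_m(s)\pi_{m,n}=(u(s)\Pi)_n$: the diagonal term $\pi_{n,n}=-\pi_n$ contributes $-\pi_n u_n$, and for $m\ne n$ the term $\pi_{m,n}=\sum_{i\in I,\ r_i=n-m}\sigma^2k_{i,m}^2$ contributes $\sum_i\sigma^2k_{i,m}^2 u_m$ over those $i$ with $n=m+r_i$, i.e.\ exactly the off-diagonal sum above after relabeling $m=n+r_i$. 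Note that the row index of $\Pi$ corresponds to $n+r_i$ and the column index to $n$, so one must be careful that $u'=u\Pi$ means $u_n'=\sum_m u_m\pi_{m,n}$ — i.e.\ $u$ is a row vector acted on from the right, which is the forward (Kolmogorov) equation convention, consistent with the off-diagonal entries $\pi_{m,n}$ being indexed with the ``source'' state $m$ first. Checking this index bookkeeping carefully is the one place where a sign or transposition error could slip in.

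The main obstacle, such as it is, is the interchange of differentiation, expectation, and the infinite sum that is implicit in passing from \eqref{e:differentialXnsquare} to \eqref{e:diff2ndmomentclosed} — but this was already carried out in the proof of the preceding proposition, where the uniform bound $|X_n(t)|\le\|X(t)\|\le C$ a.s.\ makes the stochastic-integral term a genuine (mean-zero) martingale and justifies taking expectations inside the integral form. So at the level of this corollary there is essentially nothing left to prove beyond reading off \eqref{e:diff2ndmomentclosed} in the notation of $\Pi$ and recording the two a priori bounds. I would therefore keep the proof to a few lines: cite the proposition for the closed equation, identify the right-hand side with $(u\Pi)_n$ via \eqref{e:def_Pi}, note non-negativity from the definition of $u_n$ as a second moment, and note the $l^1$ bound from the Leray/energy-controlled estimate $\sum_n|X_n(t)|^2\le C^2$ a.s.
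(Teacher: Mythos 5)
Your proposal is correct and is exactly the paper's (implicit) argument: the corollary is stated there without a separate proof, as an immediate consequence of \eqref{e:diff2ndmomentclosed}, the definition \eqref{e:def_Pi}, and the two a priori facts you record (non-negativity of second moments and the energy-controlled bound $\sum_n u_n(t)\le C^2$). One caveat, at precisely the step you flagged as delicate: reading \eqref{e:def_Pi} literally, the off-diagonal contribution to $(u\Pi)_n=\sum_m u_m\pi_{m,n}$ is $\sum_{i\in I}\sigma^2k_{i,n-r_i}^2\,u_{n-r_i}$ — your relabeling should be $m=n-r_i$, not $m=n+r_i$ — and this matches the sum $\sum_{i\in I}\sigma^2k_{i,n}^2\,u_{n+r_i}$ appearing in \eqref{e:diff2ndmomentclosed} only after the change of index $i\mapsto\tilde\imath$, using $r_{\tilde\imath}=-r_i$ and $k_{\tilde\imath,n+r_i}^2=k_{i,n}^2$; equivalently, after invoking the symmetry $\pi_{m,n}=\pi_{n,m}$ established in Proposition~\ref{pro:PiisQmatrix} (without it, \eqref{e:diff2ndmomentclosed} reads most directly as $u'=\Pi u$ rather than $u'=u\Pi$). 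Once that substitution is made explicit, your proof is complete and identical in substance to the paper's.
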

\begin{proposition}\label{pro:PiisQmatrix}
The infinite matrix $\Pi$ defined above is the stable, conservative
q-matrix of a continuous-time Markov chain on the positive
integers. Moreover $\Pi$ is symmetric.
\end{proposition}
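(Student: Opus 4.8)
The plan is to verify, straight from the definition~\eqref{e:def_Pi}, the four properties that make $\Pi$ a stable, conservative q-matrix (off-diagonal entries $\geq 0$, diagonal entries $\leq 0$, finiteness of $-\pi_{n,n}$, and rows summing to zero), and then its symmetry. Everything but the symmetry is immediate; the symmetry is where the pairing structure of the model — the involution $\tau$ of Lemma~\ref{le:some_alg_conseq} — comes in.

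For $m\neq n$ we have $\pi_{n,m}=\sigma^2\sum_{i\in I:\,r_i=m-n}k_{i,n}^2\geq 0$, and $\pi_{n,n}=-\pi_n\leq 0$. Stability means $\pi_n<\infty$: since $k_{i,n}=\lambda^nk_i$ for $i\in I_n$ and $k_{i,n}=0$ otherwise, $\pi_n=\sigma^2\lambda^{2n}\sum_{i\in I_n}k_i^2$ is a sum over the finite set $I_n\subset I$ (Remark~\ref{rem:I_n_stable}), hence finite. For conservativity I would sum the off-diagonal entries in row $n$: every $i\in I$ with $k_{i,n}\neq 0$ has $n+r_i\geq 1$ and $r_i\neq 0$ (requirement ii, no self-interactions), so it contributes exactly once, to the entry with index $m=n+r_i\neq n$; conversely every off-diagonal contribution arises this way. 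Therefore $\sum_{m\geq 1,\,m\neq n}\pi_{n,m}=\sigma^2\sum_{i\in I}k_{i,n}^2=\pi_n=-\pi_{n,n}$.

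For the symmetry, fix $n,m\geq 1$ with $m\neq n$. Since $r_{\tilde\imath}=-r_i$ by~\eqref{e:cancellations_r}, the map $i\mapsto\tilde\imath=\tau(i)$ restricts to a bijection of $\{i\in I:r_i=m-n\}$ onto $\{i\in I:r_i=n-m\}$. For such an $i$ one has $n+r_i=m$, so Lemma~\ref{le:some_alg_conseq} (with $\tilde n=n+r_i=m$) gives $k_{\tilde\imath,m}=-k_{i,n}$, whence $k_{\tilde\imath,m}^2=k_{i,n}^2$. Reindexing the defining sum for $\pi_{n,m}$ along this bijection then yields $\pi_{n,m}=\sigma^2\sum_{i:\,r_i=m-n}k_{i,n}^2=\sigma^2\sum_{i':\,r_{i'}=n-m}k_{i',m}^2=\pi_{m,n}$.

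I do not anticipate a genuine difficulty: the statement is essentially bookkeeping once Lemma~\ref{le:some_alg_conseq} is in hand. The one point requiring care is the index shift in the symmetry argument — checking that $m=n+r_i$ is indeed a positive integer (true because only $i\in I_n$ contribute, and then $n+r_i\geq 1$) and that the involution sends admissible pairs $(i,n)$ to admissible pairs $(\tilde\imath,m)$, so that the reindexing neither drops nor introduces terms; both facts are already contained in the conclusions of Lemma~\ref{le:some_alg_conseq}.
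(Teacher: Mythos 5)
Your proof is correct and follows essentially the same route as the paper's: the q-matrix properties are checked directly from~\eqref{e:def_Pi} (with conservativity coming from the fact that the sets $\{i\in I:n+r_i=m\}$, $m\neq n$, partition the indices contributing to $\pi_n$), and symmetry is obtained by reindexing the defining sum along the involution $i\mapsto\tilde\imath$ using $r_{\tilde\imath}=-r_i$ and $k_{\tilde\imath,\tilde n}=-k_{i,n}$ from Lemma~\ref{le:some_alg_conseq}. No gaps.
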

\begin{proof}
$\Pi$ is a stable q-matrix if $\pi_{n,n}<0$ for all $n$,
  $\pi_{n,m}\geq0$ whenever $n\neq m$ and for all $n$
\begin{equation}
\sum_{m:m\neq n}\pi_{n,m}\leq\pi_n,
\end{equation}
moreover it is conservative if the latter holds with equality.

The first two conditions are obvious, and the third one follows from
the fact that the sets $\{i\in I:n+r_i=m\}$ with $m\geq1$, $m\neq n$
form a partition of $\{i\in I:n+r_i\geq1, k_{i,n}\neq0\}$.

Finally, for $m\neq n$,
\begin{equation}
 \pi_{n,m}
=\sum_{\substack{\tilde\imath\in I\\r_{\tilde\imath}=m-n}}\sigma^2k_{\tilde\imath,n}^2
=\sum_{\substack{i\in I\\r_i=n-m}}\sigma^2k_{i,n-r_i}^2
=\sum_{\substack{i\in I\\r_i=n-m}}\sigma^2k_{i,m}^2
=\pi_{m,n}\qedhere
\end{equation}
\end{proof}

The q-matrix of a continuous-time Markov chain is associated to the
forward and backward Kolmogorov equations, namely
\begin{align}\label{e:ctmc_forward}
u'&=u\Pi\\
u'&=\Pi u
\end{align}
The transition probabilities of the Markov chain $p_{n,m}(t)$ solve
both equations, in the classes $l^1$ and $l^\infty$ respectively, with
fixed $n$ and $m$ respectively and with initial condition
$u_{n,m}(0)=\delta_{n,m}$.

These equations always have at least one shared ``special'' solution
$f_{i,j}(t)$, which is a transition function, and is called the
minimal solution. They do not always have uniqueness of
solutions. Here it will be important that there is uniqueness for the
forward equation and not for the backward. The key information is that
the q-matrix is symmetric.

\begin{lemma}\label{lem:uniqforwardsymm}
Suppose $\Pi$ is a stable and \emph{symmetric} q-matrix. Consider the
forward equations with zero initial condition. Then the only
non-negative solution in $L^\infty([0,\infty);l^1)$ is zero.

More in general, given any non-negative $l^1$ initial condition, there
a unique solution in the same class.
\end{lemma}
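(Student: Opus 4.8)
The plan is to reduce the uniqueness of the forward equation $u'=u\Pi$ in $L^\infty([0,\infty);l^1)$ to the uniqueness of the \emph{backward} equation $v'=\Pi v$ in $L^\infty([0,\infty);l^\infty)$, using symmetry of $\Pi$ as the bridge. The point is that, by the general theory of q-matrices, the backward equation with zero initial condition always has uniqueness in $l^\infty$ when the q-matrix is conservative and the minimal transition function is honest at each finite time (which it is, since the minimal solution is always substochastic and solves the backward equation); more usefully, Reuter's criterion characterizes forward-uniqueness, and for a symmetric q-matrix forward-uniqueness is equivalent to backward-uniqueness. Concretely, I would argue by duality: if $u(\cdot)$ is a non-negative solution in $L^\infty([0,\infty);l^1)$ with $u(0)=0$, and $(f_{n,m}(t))$ is the minimal transition function (which satisfies \emph{both} Kolmogorov equations), then for fixed $m$ and $T$ one expects the identity
\[
u_m(T)=\sum_{n\geq1}u_n(0)f_{n,m}(T)+\big(\text{boundary term at infinity}\big),
\]
and the content of the lemma is that the boundary term vanishes precisely because of symmetry.

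The key steps, in order, are as follows. First, record that by Proposition~\ref{pro:PiisQmatrix} the matrix $\Pi$ is stable, conservative and symmetric, and let $f=(f_{n,m}(t))$ denote the associated minimal transition function; it solves both \eqref{e:ctmc_forward} equations and satisfies $\sum_m f_{n,m}(t)\leq1$. Second, given a non-negative $u\in L^\infty([0,\infty);l^1)$ solving $u'=u\Pi$ with $u(0)=0$, I would fix $m$ and $T>0$ and consider the function $t\mapsto \sum_{n}u_n(T-t)f_{n,m}(t)$ on $[0,T]$; differentiating and using $u'=u\Pi$ together with $\partial_t f_{\cdot,m}=\Pi f_{\cdot,m}$ (the backward equation), the drift terms formally cancel, so this function is constant, giving $u_m(T)=\sum_n u_n(0)f_{n,m}(T)=0$ — \emph{provided} the interchange of summation and differentiation, and the cancellation of the $\Pi$-terms, are justified. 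Third, to justify that interchange, use that $u\in l^1$ uniformly, that $f_{n,m}\le 1$, and that $\Pi$ is symmetric so that $\sum_n u_n \pi_{n,k}=\sum_n u_n\pi_{k,n}$ is absolutely summable against $u\in l^1$ (here $\pi_n=O(\lambda^{2n})$ from Remark~\ref{rem:I_n_stable} is the danger, but the $l^1$ bound on $u$ plus the sub-probability bound on $f$ keeps the double series finite when paired correctly). Fourth, conclude $u\equiv0$, and then deduce the general statement: given a non-negative $l^1$ initial datum $x$, existence of a solution in the class is furnished by $u_m(t)=\sum_n x_n f_{n,m}(t)$ (non-negative, in $l^1$ with $\|u(t)\|_{l^1}\le\|x\|_{l^1}$, and solving the forward equation since $f$ does), and uniqueness is immediate by linearity from the zero-initial-condition case.

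The main obstacle I expect is the rigorous justification of the Fubini-type interchanges in the duality computation — specifically, differentiating $\sum_n u_n(T-t)f_{n,m}(t)$ term by term and showing the off-diagonal $\Pi$-contributions cancel rather than merely formally cancel. The subtlety is exactly the one flagged in the paragraph preceding the lemma: the backward equation does \emph{not} have uniqueness, so one cannot simply invoke an abstract uniqueness theorem for $v'=\Pi v$; one must exploit that $f$ is the \emph{minimal} solution and that $u$ lives in $l^1$ (not merely $l^\infty$), pairing the slow-growth of $u$ against the boundedness of $f$ so that the $\pi_n=O(\lambda^{2n})$ diagonal blowup is absorbed. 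Once the pairing $\langle u(T-t),f_{\cdot,m}(t)\rangle$ is shown to be absolutely convergent with an absolutely convergent derivative, symmetry of $\Pi$ makes the cross terms cancel and the argument closes. An alternative, if the direct duality is awkward to make rigorous, is to invoke Reuter's uniqueness criterion for the forward equation directly: forward-uniqueness holds iff the only bounded non-negative solution $z$ of $\Pi z=\theta z$ (for some, equivalently any, $\theta>0$) in $l^\infty$ is $z=0$ when tested against $l^1$; by symmetry this is the backward resolvent problem, for which the minimal chain being the unique \emph{honest-in-finite-time} solution suffices. Either route reduces the lemma to a standard fact about symmetric q-matrices, and I would present whichever makes the $l^1$-versus-$l^\infty$ bookkeeping cleanest.
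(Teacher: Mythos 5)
Your strategy (duality against the minimal transition function) differs from the paper's, which follows Feller's Laplace-transform argument: set $z_n=\int_0^\infty e^{-t}\rho_n(t)\,dt$, pick an index $m$ maximizing $z_m$, and use symmetry plus stability to get $z_m=\sum_k z_k\pi_{k,m}=-\pi_m z_m+\sum_{k\neq m}\pi_{m,k}z_k\leq 0$, hence $z\equiv0$. The crucial feature of that argument is that it only ever manipulates series over a \emph{single} index $k$ against the fixed, finite weight $\pi_m=\sum_{k\neq m}\pi_{m,k}$ (finiteness of the \emph{column} sum is exactly what symmetry buys), so no double-series interchange is needed. Your proposal, by contrast, rests entirely on the absolute convergence of the double series $\sum_{n,k}u_k(T-t)\,|\pi_{k,n}|\,f_{n,m}(t)$, and this is where it breaks: bounding it via $u_k\leq C$ and $\sum_{k\neq n}\pi_{k,n}=\pi_n$ leaves you needing $\sum_n\pi_n f_{n,m}(t)<\infty$, i.e.\ a finite expected jump rate for the minimal chain. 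But the chain associated to $\Pi$ is \emph{explosive} (this is the content of Section~\ref{sec:6-dissipation} and the whole point of the paper), so the expected number of jumps in $[0,t]$, namely $\int_0^t\sum_n\pi_n f_{m,n}(s)\,ds$, is infinite whenever explosion occurs before $t$ with positive probability; the absolute convergence you hope will work out "when paired correctly" genuinely fails. Relatedly, several premises in your framing are false in this setting: the minimal transition function is \emph{not} honest (substochastic does not imply honest), backward uniqueness does \emph{not} hold, and forward and backward uniqueness are \emph{not} equivalent here --- the paragraph preceding the lemma states explicitly that uniqueness holds for the forward equation and fails for the backward one, so any reduction to a backward-uniqueness statement is a dead end.

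There is a second, smaller gap in your reduction of the general initial condition to the zero one. The first part of the lemma only excludes \emph{non-negative} solutions with zero initial condition; the difference $v-u$ of two non-negative solutions with the same datum need not be non-negative, so "immediate by linearity" does not close the argument. The paper fills this by first showing, via the forward integral recursion (Anderson, Theorem 2.2.2), that any solution $v$ in the class dominates the one built from the minimal transition function, so that $\rho=v-u\geq0$ and the first part applies to $\rho$. Your construction of the existence half (pushing the datum through $f$) matches the paper's.
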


\begin{proof}
For the first part, we follow the classical approach by Laplace
transform, introduced by Feller~\cite{Feller57}. Let $\rho$ be such a
solution. For all $n\geq1$ and $t\geq0$ we have
\[
\begin{cases}
\rho'_n(t)=\sum_k\rho_k(t)\pi_{k,n}\\
\rho_n(t)\geq0\\
\rho_n(0)=0\\
\sum_k\rho_k(t)\leq C
\end{cases}
\]
For all $n\geq1$, let $z_n=\int_0^\infty e^{-t}\rho_n(t)dt$. Clearly
$\sum_nz_n\leq C$, so we can choose $m$ such that $z_m\geq z_k$ for
all $k$.

Notice that, since $\Pi$ is stable and symmetric
\[
|\rho'_m(t)|
=|-\pi_m\rho_m(t)+\sum_{k\neq m}\pi_{m,k}\rho_k(t)|
\leq\pi_m\rho_m(t)+\pi_mC
\leq2C\pi_m<\infty
\]
hence we can integrate by parts and use symmetry and stability again to get
\begin{multline*}
z_m
=\int_0^\infty e^{-t}\rho'_m(t)dt
=\int_0^\infty e^{-t}\sum_k\rho_k(t)\pi_{k,m}dt
=\sum_kz_k\pi_{k,m}\\
=-z_m\pi_m+\sum_{k\neq m}\pi_{m,k}z_k
\leq-z_m\pi_m+z_m\sum_{k\neq m}\pi_{m,k}
\leq0
\end{multline*}
We conclude that $z_n=0$ and $\rho_n\equiv0$ for all $n$.

\medskip
For the general case, let $f_{i,j}(t)$ be the minimal solution of
$\Pi$ and let $u^0$ be a non-negative, $l^1$ initial condition. Then
$u_n(t)=\sum_{i\geq1}u^0_if_{i,n}(t)$ is a solution in the required
class. Let $v$ be another such solution and let $\rho=v-u$. By a
forward integral recursion (FIR) approach it is easy to show that the
minimality of $f$ passes to $u$, in that $v_n(t)\geq u_n(t)$. (See for
example Anderson~\cite{Anderson}, Theorem 2.2.2.) So $\rho$ is a
solution of the same problem, but with null initial condition and the
first part of the lemma applies.
\end{proof}
We have finally collected all elements to prove uniqueness of
solutions.
\begin{theorem}\label{thm:uniqueness_linear}
Suppose $L_i$ is the identity matrix for all $i\in I$.
Then there is strong uniqueness for the linear
system~\eqref{e:itodifflinearmodel} in the class of
$L^\infty(\Omega\times[0,\infty);H)$ solutions.
\end{theorem}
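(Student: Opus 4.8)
The plan is to exploit the linearity of~\eqref{e:itodifflinearmodel} to reduce strong uniqueness to the assertion that a solution started from $0$ vanishes identically, and then to read off that vanishing from the forward Kolmogorov equation, using the two preceding facts about the q-matrix $\Pi$.

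First I would take two energy controlled solutions $X$ and $X'$ of~\eqref{e:itodifflinearmodel} defined on the same filtered probability space, driven by the same family $W$ of Brownian motions (symmetric with respect to $\tau$) and with the same initial condition $x\in H$. Set $D:=X-X'$. Subtracting the integral identities~\eqref{e:itointlinearmodel} for $X$ and $X'$ and using that each $B_i$ is bilinear and each $L_i$ linear, one sees at once that $D$ has continuous adapted $H$-valued components, satisfies~\eqref{e:itointlinearmodel} with the same $W$ and with initial condition $0$, and obeys $\|D(t)\|\leq\|X(t)\|+\|X'(t)\|\leq 2C$ a.s.\ for a.e.\ $t$; hence $D$ is itself an energy controlled solution of the linear system, with zero initial data.

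Next, since all the $L_i$ are the identity, Corollary~\ref{cor:2ndQmomentsolvesforward} applies to $D$: the sequence $u_n(t):=\mathbb E\bigl[|D_n(t)|^2\bigr]$ is a non-negative solution, in the class $L^\infty\bigl([0,\infty);l^1\bigr)$, of the forward Cauchy problem $u'=u\Pi$ with $u_n(0)=0$ for all $n\geq1$. By Proposition~\ref{pro:PiisQmatrix} the matrix $\Pi$ is a stable, conservative and, crucially, \emph{symmetric} q-matrix, so Lemma~\ref{lem:uniqforwardsymm} forces $u\equiv0$. Thus $\mathbb E\bigl[|D_n(t)|^2\bigr]=0$ for every $n\geq1$ and every $t\geq0$, whence $D_n(t)=0$ a.s.\ for each fixed $n,t$; by continuity of the components of $D$ this upgrades to $D\equiv0$ a.s., i.e.\ $X$ and $X'$ are indistinguishable.

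The only step requiring genuine care is the reduction: one must verify that the difference of two weak solutions again fits the definition of (energy controlled) solution of~\eqref{e:itodifflinearmodel}, in particular that it still satisfies the integral equation with the \emph{same} $W$ and that the $L^\infty(\Omega\times[0,\infty);H)$ bound is inherited. This is immediate precisely because~\eqref{e:itodifflinearmodel} is linear in $X$, which is the whole point of having passed through the Girsanov transformation. Everything else is already in place, and it is essential that the second moments solve the \emph{forward} equation --- where uniqueness holds thanks to the symmetry of $\Pi$ --- rather than the backward one, for which uniqueness may fail.
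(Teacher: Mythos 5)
Your proof is correct and follows essentially the same route as the paper: reduce by linearity to a solution with zero initial data, then apply Corollary~\ref{cor:2ndQmomentsolvesforward}, Proposition~\ref{pro:PiisQmatrix} and the first part of Lemma~\ref{lem:uniqforwardsymm} to conclude that the second moments, and hence the solution, vanish. The extra care you devote to checking that the difference of two solutions is again an energy controlled solution is exactly the content the paper compresses into the phrase ``by linearity.''
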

\begin{proof}
By linearity of~\eqref{e:itodifflinearmodel} it is enough to prove
that when the initial condition is $x=0$ there is no non-trivial
solution. Suppose $(Q,X)$ is any energy controlled solution with zero
initial condition, then by
Corollary~\ref{cor:2ndQmomentsolvesforward},
Proposition~\ref{pro:PiisQmatrix} and the first part of
Lemma~\ref{lem:uniqforwardsymm}, $\mathbb
E^Q\left[|X_n(t)|^2\right]=0$ for all $n$ and $t$, hence $X=0$ a.s.
\end{proof}

\begin{remark}
This result applies seamlessly also to the case of $L^\infty$,
non-anticipative, random initial conditions.
\end{remark}
Uniqueness of solutions for the auxiliary linear system is then
inherited by the original non-linear system, but in a weakened form.
\begin{theorem}\label{thm:uniqueness_nonlinear}
Suppose $L_i$ is the identity matrix for all $i\in I$ and let $T>0$. Then
there is uniqueness in law for the non-linear
system~\eqref{e:itodiffmodel} in the class of Leray
$L^\infty(\Omega\times[0,T];H)$ solutions.
\end{theorem}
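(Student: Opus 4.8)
The plan is to reduce uniqueness in law for the non-linear system to the strong uniqueness already established for the auxiliary linear system, using the Girsanov correspondence of Section~\ref{sec:3-girsanov}. Suppose $(\Omega^1, \mathcal F^1, \mathcal F^1_t, P^1, W^1, X^1)$ and $(\Omega^2, \mathcal F^2, \mathcal F^2_t, P^2, W^2, X^2)$ are two Leray solutions of~\eqref{e:itodiffmodel} on $[0,T]$ with the same initial condition $x \in H$. By Proposition~\ref{prop:nonlinearsol_solveslinear}, applied on each space, we obtain measures $Q^1_T$ and $Q^2_T$ on $\mathcal F^1_T$ and $\mathcal F^2_T$ respectively, and families of Brownian motions $Y^1$, $Y^2$ defined by~\eqref{e:intY}, so that $(\Omega^j, \mathcal F^j_T, \mathcal F^j_t, Q^j_T, Y^j, X^j)$ is a Leray solution of the \emph{linear} system~\eqref{e:itodifflinearmodel} on $[0,T]$, for $j=1,2$. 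Since $L_i$ is the identity for all $i$, Theorem~\ref{thm:uniqueness_linear} applies to the linear system; but strong uniqueness (pathwise uniqueness together with existence) yields uniqueness in law, so the laws of $X^1$ under $Q^1_T$ and of $X^2$ under $Q^2_T$ coincide as probability measures on $C([0,T];H)$ (or more precisely on the path space of the components).

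Next I would transfer this equality of laws back through the density. The key point is that the Girsanov density $\frac{dP_T}{dQ_T} = \exp\{\tilde Z_T - \tfrac12 [\tilde Z,\tilde Z]_T\}$ from~\eqref{e:QPdensity} is, by inspection of~\eqref{e:defZtandtilde} and~\eqref{e:quadr_var_Z_bound_T}, a \emph{measurable functional of the solution path $X$ alone} (indeed $[\tilde Z, \tilde Z]_T = \sigma^{-2}\int_0^T \sum_{i\in I^*}\sum_{n\geq1} |X_{n+h_i}(s)|^2 ds$, and $\tilde Z_T$ is the corresponding stochastic integral against $Y$, which in turn is recoverable from $X$ via~\eqref{e:itointlinearmodel}). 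Write $\Phi(X)$ for this functional, so that $\frac{dP^j_T}{dQ^j_T} = \Phi(X^j)$, $Q^j_T$-a.s., for $j=1,2$. Then for any bounded measurable test function $F$ on path space,
\[
\mathbb E^{P^j}[F(X^j)]
= \mathbb E^{Q^j_T}\bigl[F(X^j)\,\Phi(X^j)\bigr],
\]
and since the law of $X^j$ under $Q^j_T$ is the same for $j=1$ and $j=2$, the right-hand side is the same; hence $\mathbb E^{P^1}[F(X^1)] = \mathbb E^{P^2}[F(X^2)]$, i.e.\ $X^1$ and $X^2$ have the same law. Finally, the law of the driving noise $W^j$ is determined: $W^j$ is a family of Brownian motions symmetric w.r.t.~$\tau$ (fixed), and~\eqref{e:intY} expresses $Y^j$ as $W^j$ plus a path functional of $X^j$; thus the joint law of $(X^j, W^j)$ is a measurable image of the law of $X^j$ under $Q^j_T$, which again agrees for $j=1,2$. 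This gives uniqueness in law of the solution pair $(X,W)$ on $[0,T]$.

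The main obstacle I anticipate is the bookkeeping needed to verify that $\Phi$ is genuinely a path functional of $X$ and is defined consistently across the two probability spaces — in particular that the stochastic integral defining $\tilde Z_T$ can be realized as a measurable map on $C([0,T];H)$ independent of the underlying space, which is the standard but slightly delicate "Yamada–Watanabe"-type argument. Once one grants that the Girsanov density depends only on $X$ (and the fixed parameters $\sigma$, $\tau$, $I^*$), and that Theorem~\ref{thm:uniqueness_linear} gives uniqueness in law for the linear system regardless of the probability space, the transfer above is routine. One should also note that the restriction to finite $T$ is essential only in that Proposition~\ref{prop:nonlinearsol_solveslinear} and Novikov's condition~\eqref{e:novikov_T} are stated for finite $T$; since the statement of the theorem is itself for fixed $T > 0$, no additional patching at $t = \infty$ is needed here.
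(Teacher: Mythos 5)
Your overall strategy is exactly the paper's: pass to the linear system via Proposition~\ref{prop:nonlinearsol_solveslinear}, get uniqueness in law there from strong uniqueness (Theorem~\ref{thm:uniqueness_linear}) plus Yamada--Watanabe, and pull the law back through the Girsanov density. There is, however, one step in your write-up that as stated would fail: the claim that the density $\Phi=\exp\{\tilde Z_T-\tfrac12[\tilde Z,\tilde Z]_T\}$ is a measurable functional of the path of $X$ \emph{alone}, justified by saying that $Y$ is ``recoverable from $X$ via~\eqref{e:itodifflinearmodel}''. The quadratic variation $[\tilde Z,\tilde Z]_T$ is indeed a functional of $X$, but $\tilde Z_T$ is a stochastic integral against $Y$, and recovering $dY_{i,n+h_i}$ from $dX_n$ requires inverting the map $v\mapsto B_i(X_{n+r_i},v)$; this is impossible on any time set where $X_{n+r_i}$ vanishes (and need not be possible even for $X_{n+r_i}\neq0$ for a general bilinear $B_i$). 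Since the integrand of $\tilde Z$ involves $X_{n+h_i}$ while invertibility would be governed by the different component $X_{n+r_i}$, one cannot even argue that the non-recoverable times do not contribute.

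The paper's proof avoids this by never reducing to a functional of $X$ alone: Yamada--Watanabe is applied to deduce equality of the \emph{joint} laws of the pairs $(X^{(1)},W^{(1)})$ and $(X^{(2)},W^{(2)})$ under $Q^{(1)}$ and $Q^{(2)}$, and then $Z^{(j)}$ (hence the density) is adjoined as a measurable functional of the pair --- a stochastic integral is determined in law by integrand and integrator together --- so that the triples $(X^{(j)},W^{(j)},Z^{(j)})$ have the same law for $j=1,2$. With that single modification (carry the pair, not just $X$, through the transfer), the rest of your argument, including the remark about why finite $T$ suffices, goes through as written.
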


\begin{proof}
Suppose we are given two solutions $(P^{(1)},W^{(1)},X^{(1)})$ and
$(P^{(2)},W^{(2)},X^{(2)})$. We want to prove that
\begin{multline}
\mathbb E^{P^{(1)}}[f(X^{(1)}(t_1),X^{(1)}(t_2),\dots,X^{(1)}(t_n))]\\
=\mathbb E^{P^{(2)}}[f(X^{(2)}(t_1),X^{(2)}(t_2),\dots,X^{(2)}(t_n))]
\end{multline}
where $f$ is any bounded measurable real function on $H^n$ and
$t_1,t_2,\dots,t_n\in[0,T]$. By
Proposition~\ref{prop:nonlinearsol_solveslinear} and the first one
of~\eqref{e:QPdensity} we have that, for $j=1,2$
\begin{multline}\label{e:expecvalfofx}
\mathbb E^{P^{(j)}}[f(X^{(j)}(t_1),X^{(j)}(t_2),\dots,X^{(j)}(t_n))]\\
=\mathbb E^{Q^{(j)}}[\exp\{-Z^{(j)}_T+\frac12[Z^{(j)},Z^{(j)}]_T\}f(X^{(j)}(t_1),X^{(j)}(t_2),\dots,X^{(j)}(t_n))]
\end{multline}
Where $Z^{(j)}$ is defined by~\eqref{e:defZtandtilde}.  By
Theorem~\ref{thm:uniqueness_linear}
equation~\eqref{e:itodifflinearmodel} has strong uniqueness, hence we
can apply an infinite-dimensional version of Yamada-Watanabe theorem
(see Revuz and Yor~\cite{RevuzYor} or Pr\'ev\^ot and
R\"ockner~\cite{PreRoe}) to deduce that the laws of
$(X^{(1)},W^{(1)})$ and $(X^{(2)},W^{(2)})$ on $C([0,T];\mathbb
R^{2d})^{\mathbb N}$ are equal, under $Q^{(1)}$ and $Q^{(2)}$
respectively.

Then of course we can include also $Z^{(i)}$ by their definition and
conclude that $(X^{(1)},W^{(1)},Z^{(1)})$ under $Q^{(1)}$ and
$(X^{(2)},W^{(2)},Z^{(2)})$ under $Q^{(2)}$ have the same law,
yielding in particular that~\eqref{e:expecvalfofx} does not depend on
$j$.
\end{proof}

\section{Existence of solutions}\label{sec:5-existence}

In this section we prove strong existence of solutions for the linear
auxiliary model and deduce weak existence for the non-linear
model. The approach is by finite-dimensional approximation and follows
Pardoux~\cite{Pardoux75phd} and Krylov and
Rozovski\u\i~\cite{KryRoz79}. The border term of the
finite-dimensional systems is chosen so that energy conservation
holds, giving a strong tool to prove convergence.

\begin{theorem}\label{thm:strong_existence_linear}
Let $(\Omega, \mathcal F_\infty, (\mathcal F_t)_{t\geq0}, Q)$ be a
filtered probability space. Let $Y$ be a family of adapted
$d$-dimensional Brownian motions symmetric with respect to $\tau$.
Given an initial condition $\chi\in L^\infty((\Omega,\mathcal F_0);H)$ and
$T>0$, there exists at least an $H$-valued stochastic process $X$ with
continuous adapted components, such that $Q$-almost surely
$\|X(t)\|\leq\|\chi\|$ for all $t\geq0$ and for all $n\geq1$ and all
$t\geq0$,
\begin{equation}\label{e:itointlinearmodel_random_ic}
X_n(t)=\chi_n+\sum_{i\in I}\Bigl\{\int_0^t\sigma k_{i,n}B_i\bigl(X_{n+r_i}(s),dW_{i,n+h_i}(s)\bigr)-\int_0^t\frac{\sigma^2}2k_{i,n}^2L_iX_n(s)ds\Bigr\}
\end{equation}
Such a process is called strong Leray solution.
\end{theorem}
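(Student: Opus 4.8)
The plan is to prove existence of the strong Leray solution of the linear system by a finite-dimensional (Galerkin-type) truncation, following the scheme of Pardoux and Krylov--Rozovski\u\i. For each $N\geq1$ let $X^{(N)}=(X^{(N)}_1,\dots,X^{(N)}_N)$ solve the finite system obtained from \eqref{e:itointlinearmodel_random_ic} by keeping only the indices $n\leq N$, setting $X^{(N)}_n\equiv0$ for $n>N$, and \emph{modifying the boundary terms} near $n=N$ so that the finite system still enjoys exact energy conservation. Concretely one drops, for each pair $\{i,\tilde\imath\}$, both of the two cancelling interactions as soon as one of the two components it couples has index $>N$; since the Stratonovich cancellation \eqref{e:cancellations} is between pairs, removing pairs wholesale keeps $\sum_{n\leq N}d|X^{(N)}_n|^2=0$ in Stratonovich form, equivalently the It\^o identity $d\sum_{n\le N}|X^{(N)}_n|^2 = 2\sum_{n\le N}\sigma k_{i,n}\langle X^{(N)}_n,B_i(X^{(N)}_{n+r_i},dW_{i,n+h_i})\rangle$ holds with the drift terms cancelling. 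This finite linear SDE with (globally) Lipschitz, indeed linear, coefficients has a unique strong continuous solution on $[0,T]$ adapted to the given filtration, for the given $\mathcal F_0$-measurable initial datum $\chi$ truncated to its first $N$ coordinates.

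The next step is the a priori energy bound. Taking expectations in the conservation identity, the stochastic term is a true martingale (the integrand is bounded on $[0,T]$ because the finite system stays in a fixed finite-dimensional ball: from $\mathbb E\sum_{n\le N}|X^{(N)}_n(t)|^2 = \mathbb E\sum_{n\le N}|\chi_n|^2$ and Gronwall-type control one first gets $L^2$ boundedness, then localizes), so $\mathbb E\sum_{n\leq N}|X^{(N)}_n(t)|^2=\mathbb E\sum_{n\leq N}|\chi_n|^2\leq\|\chi\|^2_{L^\infty}$ for all $t\in[0,T]$. To get the pathwise bound $\|X(t)\|\le\|\chi\|$ a.s.\ one uses that for the \emph{finite} system energy is conserved pathwise: $\sum_{n\le N}|X^{(N)}_n(t)|^2 = \sum_{n\le N}|\chi_n|^2 + 2\int_0^t(\cdots)$, and the stochastic integral on the right is a continuous $L^2$-martingale starting at $0$ whose quadratic variation is controlled by $\int_0^t\sum_{n\le N}|X^{(N)}_n|^2$ times a constant depending on the $B_i$ and $\lambda^N$; this gives uniform-in-$N$ bounds on $\mathbb E\sup_{[0,T]}$ of the truncated energy via Burkholder--Davis--Gundy but, crucially, the \emph{almost sure} bound $\|X(t)\|\le\|\chi\|$ for the limit will come from weak lower semicontinuity of the norm together with the conservation identity, passed to the limit.

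The passage to the limit is the standard compactness argument: the family $\{X^{(N)}\}$ is bounded in $L^2(\Omega\times[0,T];H)$, hence (along a subsequence) weakly convergent to some $X$; moreover, testing each fixed coordinate, the martingale and drift terms in \eqref{e:itointlinearmodel_random_ic} are linear in $X$, so one identifies the limit by passing to the limit in each coordinate equation. The cleanest route is to observe that for fixed $n$ the right-hand side of the $n$-th equation involves only $X_{n+r_i}$ and $X_n$ for $i\in I_n$ — finitely many coordinates — so weak $L^2$-convergence of those coordinates (combined with the It\^o isometry to pass the limit inside the stochastic integral, using that the integrands converge weakly in $L^2(\Omega\times[0,T])$) yields that $X$ satisfies \eqref{e:itointlinearmodel_random_ic} for every $n$. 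Continuity of the components of the limit and adaptedness follow because each $X_n$ is then represented as $\chi_n$ plus continuous adapted stochastic and Lebesgue integrals. Finally, from the conservation identity for the truncations, $\|X^{(N)}(t)\|^2 = \|\chi^{(N)}\|^2 + 2M^{(N)}_t$ with $M^{(N)}$ a martingale; one shows $M^{(N)}_t$ is bounded in $L^1$ and, combined with Fatou and weak lower semicontinuity, deduces $\|X(t)\|\le\|\chi\|$ $Q$-a.s.\ for all $t$ (first for a.e.\ $t$, then for all $t$ by continuity of $t\mapsto X_n(t)$ coordinatewise and monotone convergence).

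The main obstacle is the combination of two subtleties at the boundary index $N$. First, one must choose the truncation so that energy conservation is exact for the finite system — this is what makes the a priori bound uniform in $N$ and free of any Gronwall blow-up in $\lambda^N$ — and verifying that dropping interactions \emph{in cancelling pairs} preserves the Stratonovich cancellation requires care with Lemma~\ref{le:some_alg_conseq} (the involution $\varphi$ must map the kept index set to itself, which is why one removes a pair as soon as either of its two endpoints exceeds $N$). Second, upgrading the easy $L^2(\Omega\times[0,T];H)$ bound to the pointwise-in-$\omega$ Leray bound $\|X(t)\|\le\|\chi\|$ requires handling the weak limit carefully: weak convergence does not preserve the norm, so one genuinely needs the pathwise conservation identity of the finite systems plus lower semicontinuity rather than a naive limit of $\|X^{(N)}(t)\|$. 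Everything else — existence and uniqueness for the finite linear SDE, identification of the limit equation, continuity of components — is routine.
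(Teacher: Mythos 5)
Your overall strategy --- Galerkin truncation with boundary terms arranged to respect energy conservation, a uniform-in-$N$ bound from that conservation, weak compactness in $L^2(\Omega\times[0,T];H)$, coordinatewise identification of the limit equation via weak continuity of the stochastic integral, and lower semicontinuity to transfer the energy bound to the weak limit --- is exactly the paper's. However, there is a genuine gap in the one step that actually delivers the pathwise Leray bound $\|X(t)\|\le\|\chi\|$. You write the energy identity of the truncated system as $\|X^{(N)}(t)\|^2=\|\chi^{(N)}\|^2+2M^{(N)}_t$ with $M^{(N)}$ a nontrivial martingale, and propose to conclude via ``$M^{(N)}$ bounded in $L^1$, plus Fatou and weak lower semicontinuity''. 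This cannot work: a martingale bounded in $L^1$ exceeds any level with positive probability (consider $\int_0^tS\,dW$ for geometric Brownian motion), so no almost-sure upper bound on $\|X^{(N)}(t)\|$ --- hence none on the weak limit --- follows from such an identity. There is also an internal inconsistency: if, as you assert, $\sum_{n\le N}d|X^{(N)}_n|^2=0$ in Stratonovich form, then the truncated energy is constant and its It\^o differential is zero as well; it cannot at the same time equal a nonvanishing martingale increment.

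The missing observation is that the \emph{martingale} part of the energy differential cancels identically, for the same algebraic reason the drift does: by local conservativity, i.e.\ \eqref{e:cancellations_k}--\eqref{e:alias_h} and \eqref{e:cancellations}, the terms $\sigma k_{i,n}\langle X^{(N)}_n,B_i(X^{(N)}_{n+r_i},dY_{i,n+h_i})\rangle$ come in pairs indexed by $(i,n)$ and $(\tilde\imath,\tilde n)$ summing to zero, exactly as organized by the involution $\varphi$ of Lemma~\ref{le:some_alg_conseq}. You set up the truncation ``in cancelling pairs'' precisely so that this pairing survives at the boundary index $N$ --- a sensible reading of the paper's ``border term chosen so that energy conservation holds'' --- but you then failed to apply it to the stochastic terms. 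Once you do, you obtain the exact pathwise identity $\sum_{n\le N}|X^{(N)}_n(t)|^2=\sum_{n\le N}|\chi_n|^2$ $Q$-a.s.\ for all $t$ (this is \eqref{e:galerkin_energy_isometry}), and from there your lower-semicontinuity argument (the paper implements it by testing the weak limit against $U=X\|X\|^{-1}\mathbbm 1_A$ on the exceptional set $A$) does yield $\|X(t)\|\le\|\chi\|$ for $\mu$-a.e.\ $(\omega,t)$, upgraded to all $t$ by continuity of the components. The remaining steps of your proposal --- well-posedness of the finite linear SDE, closedness of the progressively measurable subspace under weak limits, weak continuity of $V\mapsto\int_0^tV^l_{n+r_i}(s)\,dY^m_{i,n+h_i}(s)$, and continuity of the limit components from the integral representation --- are sound and coincide with the paper's proof.
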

\begin{proof}
For every positive $N$, let $A_N=\{1,\dots,N\}$ and consider the
finite dimensional stochastic linear system
\begin{equation}
\begin{cases}
dX^{(N)}_n=\sum_{i\in I}k_{i,n}B_i(X^{(N)}_{n+r_i},\sigma dY_{i,n+h_i})-\frac{\sigma^2}2\sum_{i\in I}k_{i,n}^2L_iX^{(N)}_ndt
, & n\in A_N\\
X^{(N)}_n(0)=\chi_n
, & n\in A_N\\
X^{(N)}_n\equiv0
, & n\in \mathbb Z\setminus A_N
\end{cases}
\end{equation}
This system has a unique global strong solution $X^{(N)}$. By the
local conservativity, we can prove that the $l^2$ norm is
$Q$-a.s.~constant. In particular we notice that
equation~\eqref{e:differentialXnsquare} applies to $X^{(N)}_n$, for
$n\in A_N$ without modifications. Then we sum on $n$ and apply
Lemma~\ref{le:some_alg_conseq} to get
\begin{multline*}
\sum_{n\in A_N}d(|X^{(N)}_n(t)|^2)
=2\sum_{I\times\mathbb Z}\sigma k_{i,n}\bigl\langle X^{(N)}_n,B_i(X^{(N)}_{n+r_i},dY_{i,n+h_i})\bigr\rangle\\
-\sum_{I\times\mathbb Z}\sigma^2k_{i,n}^2\bigl\langle X^{(N)}_n,L_iX^{(N)}_n\bigr\rangle dt
+\sum_{I\times\mathbb Z}\sigma^2k_{i,n}^2\langle X^{(N)}_{n+r_i},L_{\tilde\imath}X^{(N)}_{n+r_i}\rangle dt\\
=2\sigma\sum_\Delta\biggl\{k_{i,n}\bigl\langle X^{(N)}_n,B_i(X^{(N)}_{n+r_i},dY_{i,n+h_i})\bigr\rangle
+k_{\tilde\imath,\tilde n}\bigl\langle X^{(N)}_{\tilde n},B_{\tilde\imath}(X^{(N)}_{\tilde n+r_{\tilde\imath}},dY_{{\tilde\imath},\tilde n+h_{\tilde\imath}})\bigr\rangle\biggr\}\\
-\sum_{I\times\mathbb Z}\sigma^2k_{i,n}^2\bigl\langle X^{(N)}_n,L_iX^{(N)}_n\bigr\rangle dt
+\sum_{I\times\mathbb Z}\sigma^2k_{{\tilde\imath},\tilde n}^2\langle X^{(N)}_{\tilde n},L_{\tilde\imath}X^{(N)}_{\tilde n}\rangle dt=0
\end{multline*}
Thus
\begin{equation}\label{e:galerkin_energy_isometry}
\sum_{n\in A_N}|X^{(N)}_n(t)|^2
=\sum_{n\in A_N}|\chi_n|^2
\leq\|\chi\|_{L^\infty(\Omega;H)}^2
,\qquad \forall t\geq0,\quad Q\text{-a.s.}
\end{equation}
meaning in particular that the sequence $X^{(N)}$ is bounded in
$L^\infty(\Omega\times[0,T];H)$. Hence there exists $X$ in the same
space and a sequence $N_k\uparrow\infty$ such that
$X^{(N_k)}\xrightarrow{w*} X$ as $k\to\infty$. A fortiori there is
also weak convergence in $L^2(\Omega\times[0,T];H)$.

Let $\mathbb X$ denote the subspace of $L^2(\Omega\times[0,T];H)$ of
the progressively measurable processes, then $X^{(N)}\in\mathbb X$ for
all $N$. The space $\mathbb X$ is complete, hence it is a closed
subspace of $L^2$ in the strong topology, hence it is also closed in
the weak topology, so $X$ must be progressively measurable.

Now we want to show that $X$ indeed satisfies
equation~\eqref{e:itointlinearmodel} with $Y$ in place of $W$, as each
of the $X^{(N)}$'s does. Fix $t\in[0,T]$, $i\in I$, $n\geq1$ and
$l,m\in\{1,2,\dots,d\}$. The map
\[
V\to\int_0^tV^l_{n+r_i}(s)dY^m_{i,n+h_i}(s)
\]
is a linear strongly continuous map from $\mathbb X$ to
$L^2(\Omega;\mathbb R)$, so it is weakly continuous.
Equation~\eqref{e:itointlinearmodel}, written component-wise, reduces
to a finite sum of one-dimensional stochastic integrals like the one
above, hence we can pass to the limit and so $X$ solves the same
equations. A posteriori, from these integral equations, it follows that
there is a modification such that all components are continuous.

Finally we prove the Leray property.  Consider the product
measure $\mu=Q\times\mathcal L$ on $\Omega\times[0,T]$. Let
$\epsilon>0$, let
$A=\{(\omega,t):\|X(\omega,t)\|\geq\|\chi(\omega)\|+\epsilon\}$ and
let $U=\frac X{\|X\|}\mathbbm 1_A\in L^2(\Omega\times[0,T];H)$. Then
\[
\langle X,U\rangle_{L^2}
=\int \|X\|\mathbbm 1_Ad\mu
\geq \int_A\|\chi\|d\mu+\epsilon\mu(A)
\]
On the other hand by Cauchy-Schwartz inequality on $H$ and
by~\eqref{e:galerkin_energy_isometry}.
\[
\langle X^{(N_k)},U\rangle_{L^2}
=\int \Bigl\langle X^{(N_k)},\frac X{\|X\|}\Bigr\rangle_H\mathbbm 1_Ad\mu
\leq\int \|X^{(N_k)}\|\mathbbm 1_Ad\mu
\leq\int_A\|\chi\|d\mu
\]
Taking again the weak limit, we have $\mu(A)=0$ and by the arbitrarity
of $\epsilon$, we get that $\mu$-a.e.~$\|X\|\leq\|\chi\|$. This can be
improved by the continuity of components, which implies that the maps
$t\mapsto\sum_{k\leq n}|X_k(t)|^2$ are continuous and hence
$Q$-a.s.~bounded for all $t$ and all $n$ by $\|\chi\|^2$. Letting
$n\to\infty$ we conclude.
\end{proof}

The strong existence statement for the linear model becomes a weak
existence statement for the non-linear model, due to
Proposition~\ref{prop:linearsol_solvesnonlinear}.

\begin{corollary}\label{cor:existence_nonlinear}
Given an initial condition $x\in H$ and $T>0$, there exists at least
one Leray solution of the non-linear system~\eqref{e:itodiffmodel} in
the class $L^\infty(\Omega\times[0,T];H)$.
\end{corollary}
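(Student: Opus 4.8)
The plan is to deduce weak existence for the non-linear system directly from the strong existence result for the linear system (Theorem~\ref{thm:strong_existence_linear}), using the Girsanov correspondence established in Section~\ref{sec:3-girsanov}. First I would fix the initial condition $x\in H$ and $T>0$, take any filtered probability space $(\Omega,\mathcal F_\infty,(\mathcal F_t)_{t\geq0},Q)$ carrying a family $Y$ of $d$-dimensional Brownian motions symmetric with respect to $\tau$, and apply Theorem~\ref{thm:strong_existence_linear} with the deterministic initial condition $\chi\equiv x$. This produces a strong Leray solution $X$ of the linear system~\eqref{e:itodifflinearmodel} on $[0,T]$, with $\|X(t)\|\leq\|x\|$ $Q$-a.s.\ for all $t$; in particular $(\Omega,\mathcal F_\infty,(\mathcal F_t)_{t\geq0},Q,Y,X)$ is a Leray solution of the linear system in the sense of the relevant definition.

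Next I would invoke Proposition~\ref{prop:linearsol_solvesnonlinear}: applied to this Leray solution of the linear system on $[0,T]$, it yields a probability measure $P_T$ on $\mathcal F_T$, defined via the second density in~\eqref{e:QPdensity}, and a family $W$ of Brownian motions given by~\eqref{e:intY}, such that $(\Omega,\mathcal F_T,(\mathcal F_t)_{0\leq t\leq T},P_T,W,X)$ is a Leray solution of the non-linear system~\eqref{e:itodiffmodel} on $[0,T]$. The Leray property $\|X(t)\|\leq\|x\|$ is a pathwise statement about $X$ and is preserved under the change of measure since $P_T$ and $Q$ are mutually equivalent on $\mathcal F_T$ (Remark~\ref{rem:P_Q_equivalence}); hence $X\in L^\infty(\Omega\times[0,T];H)$ under $P_T$ as well. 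Writing this out with the filtered space truncated to $[0,T]$, possibly renaming $P_T$ as $P$, gives exactly the asserted Leray solution in the class $L^\infty(\Omega\times[0,T];H)$, and the proof is complete.

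The construction is essentially a two-line composition of earlier results, so there is no serious obstacle here; the real work was already done in Theorem~\ref{thm:strong_existence_linear} (the finite-dimensional Galerkin approximation with the energy-preserving border term and the weak-compactness passage to the limit) and in the Girsanov machinery of Section~\ref{sec:3-girsanov}. The only point deserving a word of care is checking that all hypotheses of Proposition~\ref{prop:linearsol_solvesnonlinear} are genuinely met — namely that $X$ is a \emph{Leray} (not merely energy controlled) solution, which is guaranteed by the equality case $\|X(t)\|\leq\|x\|$ in Theorem~\ref{thm:strong_existence_linear} — and that the Novikov-type bound needed to define $P_T$ holds, which follows from the same a.s.\ energy bound exactly as in~\eqref{e:quadr_var_Z_bound_T} and~\eqref{e:novikov_T} with $\tilde Z$ in place of $Z$.
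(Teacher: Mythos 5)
Your proposal is correct and follows exactly the route the paper takes: the paper obtains this corollary by combining the strong Leray solution of the linear system from Theorem~\ref{thm:strong_existence_linear} with the Girsanov transfer of Proposition~\ref{prop:linearsol_solvesnonlinear}, precisely as you describe. Your additional remarks on the Leray property and the Novikov bound are accurate and consistent with the estimates~\eqref{e:quadr_var_Z_bound_T} and~\eqref{e:novikov_T} already established in Section~\ref{sec:3-girsanov}.
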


\section{Anomalous dissipation}\label{sec:6-dissipation}

In this section we want to prove that $\|X(t)\|$ goes to zero in some
sense. We consider the differential equation for the second
moments~\eqref{e:diff2ndmomentclosed} and study the continuous-time
Markov chain that has it as its forward Kolmogorov equation. The
following proposition gives an explicit connection between the two.

\begin{proposition}\label{prop:energy_is_prob_not_explosion}
Suppose $L_i$ is the identity matrix for all $i\in I$.
Let $x\in H$ and let $(Q,X)$ be the unique Leray solution of the
linear system~\eqref{e:itodifflinearmodel} with initial condition
$x$. Then there exists a continuous-time Markov chain
$(\xi_t)_{t\geq0}$ defined on a probability space $(S,\mathcal
S,\mathcal P)$ taking values in $\mathbb N$ and with q-matrix $\Pi$
defined by~\eqref{e:def_Pi}, such that for all $t\geq0$
\begin{align}\label{e:2ndmom_is_mclaw}
\mathbb E^Q\bigl[|X_n(t)|^2\bigr]
&=\|x\|^2\mathcal P(\xi_t=n)
,\qquad\forall n\geq1\\
\label{e:energy_is_bdr_prob}
\mathbb E^Q\bigl[\|X(t)\|^2\bigr]
&=\|x\|^2\mathcal P(\xi_t\in\mathbb N)
=\|x\|^2\mathcal P(\tau>t)
\end{align}
where 
\[
\tau:=\sup\{t:\xi\text{ has finitely many jumps in }[0,t)\}\in(0,\infty]
\]
is the so-called explosion time of the Markov chain.
\end{proposition}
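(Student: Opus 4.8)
The plan is to combine the deterministic uniqueness statement of Lemma~\ref{lem:uniqforwardsymm} with the classical probabilistic construction of the minimal continuous-time Markov chain attached to a q-matrix. Assume first $x\neq0$; the case $x=0$ is trivial, since then $X\equiv0$ and both sides of~\eqref{e:2ndmom_is_mclaw} and~\eqref{e:energy_is_bdr_prob} vanish for any choice of chain. By Corollary~\ref{cor:2ndQmomentsolvesforward}, the function $u_n(t):=\mathbb E^Q[|X_n(t)|^2]$ is a non-negative $L^\infty([0,\infty);l^1)$ solution of the forward Cauchy problem~\eqref{e:2ndmom_forward} with initial datum $|x_n|^2$. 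Since by Proposition~\ref{pro:PiisQmatrix} the q-matrix $\Pi$ is stable, conservative and symmetric, the general (second) part of Lemma~\ref{lem:uniqforwardsymm} applies: this is the unique solution in that class, and by its proof it is given explicitly by $u_n(t)=\sum_{m\geq1}|x_m|^2 f_{m,n}(t)$, where $(f_{m,n})_{m,n\geq1}$ is the minimal transition function of $\Pi$ (see Anderson~\cite{Anderson}).

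Next I would invoke the standard construction (again Anderson~\cite{Anderson}) of the minimal continuous-time Markov chain with q-matrix $\Pi$: on a suitable probability space $(S,\mathcal S,\mathcal P)$ take $(\xi_t)_{t\geq0}$ to be that chain with initial law $\nu(\{m\})=|x_m|^2/\|x\|^2$ — a genuine probability on $\mathbb N$ since $\|x\|^2=\sum_{m\geq1}|x_m|^2$ — and killed at the explosion time $\tau$, i.e.\ $\xi_t\in\mathbb N$ for $t<\tau$ and $\xi_t=\partial$ (cemetery state) for $t\geq\tau$. Stability of $\Pi$ gives $\tau>0$ a.s., so indeed $\tau\in(0,\infty]$. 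By the very definition of the minimal chain, $\mathcal P(\xi_t=n)=\sum_{m\geq1}\nu(\{m\})f_{m,n}(t)$ for all $n\geq1$, $t\geq0$. Multiplying by $\|x\|^2$ and using the formula for $u_n$ from the previous step,
\[
\|x\|^2\mathcal P(\xi_t=n)=\sum_{m\geq1}|x_m|^2 f_{m,n}(t)=u_n(t)=\mathbb E^Q[|X_n(t)|^2],
\]
which is~\eqref{e:2ndmom_is_mclaw}.

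For~\eqref{e:energy_is_bdr_prob} I would sum over $n$. Since $|X_n(t)|^2\geq0$, Tonelli gives $\mathbb E^Q[\|X(t)\|^2]=\sum_{n\geq1}\mathbb E^Q[|X_n(t)|^2]=\sum_{n\geq1}u_n(t)$, and summing the identity just proved, $\sum_{n\geq1}u_n(t)=\|x\|^2\sum_{n\geq1}\mathcal P(\xi_t=n)=\|x\|^2\,\mathcal P(\xi_t\in\mathbb N)$. It remains to identify $\mathcal P(\xi_t\in\mathbb N)$ with $\mathcal P(\tau>t)$: for the minimal chain one has $\{\xi_t\in\mathbb N\}=\{\tau>t\}$ up to a $\mathcal P$-null set, equivalently the mass defect $1-\sum_{n\geq1}f_{m,n}(t)$ of the minimal transition function equals $\mathcal P_m(\tau\leq t)$; averaging over $\nu$ yields $\mathcal P(\xi_t\in\mathbb N)=\mathcal P(\tau>t)$, which finishes~\eqref{e:energy_is_bdr_prob}.

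The deterministic bookkeeping (applying Corollary~\ref{cor:2ndQmomentsolvesforward} and Lemma~\ref{lem:uniqforwardsymm}) and the Tonelli exchange are routine. The one point requiring care is the last identification $\mathcal P(\xi_t\in\mathbb N)=\mathcal P(\tau>t)$: it is crucial to work with the \emph{minimal} chain so that the behaviour at and after explosion is pinned down, and one must quote the classical fact that the defect of the minimal transition function is precisely the probability of having exploded by time $t$.
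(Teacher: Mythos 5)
Your proposal is correct and follows essentially the same route as the paper: both construct the minimal chain with initial law $|x_n|^2/\|x\|^2$, identify $u_n(t)=\mathbb E^Q[|X_n(t)|^2]$ with the chain's marginals via Corollary~\ref{cor:2ndQmomentsolvesforward} and the uniqueness statement of Lemma~\ref{lem:uniqforwardsymm}, and obtain~\eqref{e:energy_is_bdr_prob} by summing over $n$. The only (harmless) difference is that you pass explicitly through the minimal transition function $f_{m,n}(t)$, whereas the paper directly observes that $t\mapsto\mathcal P(\xi_t=n)$ is another non-negative $L^\infty([0,\infty);l^1)$ solution of the forward equation and invokes uniqueness.
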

\begin{proof}
Let $p_n^0:=|x_n|^2/\|x\|^2$ for all $n\geq1$.  It is standard to
formally construct a continuous-time Markov chain $\xi_t$ on
$(S,\mathcal S,\mathcal P)$ with initial distribution $p^0$ and rates
$\pi_{n,m}$ as defined in~\eqref{e:def_Pi}. Heuristically, the
process starts at a random position $\xi_0$ with $\mathcal
P(\xi_0=n)=p^0_n$. Then every time the process arrives in a position
$n$ it waits for an exponentially distributed random time with rate
$\pi_n=\sum_{m\neq n}\pi_{n,m}$ and then jumps to a new random
position different from $n$ cheosen with probabilities
$\genfrac(){}{}{\pi_{n,m}}{\pi_n}_{m\neq n}$. This defines $\xi_t$ up
to time $\tau$. At time $\tau$ we say that $\xi$ has reached the
boundary. (Sometimes this is done by adding one absorbing point
$\theta$ to the state space.)

For $n\geq1$, $t\geq0$, let $p_n(t):=\mathcal P(\xi_t=n)$. Then $p$ is
a non-negative solution of
\[
\begin{cases}
p'=p\Pi\\
p(0)=p^0
\end{cases}
\]
in $L^\infty([0,\infty);l^1)$. By
  Corollary~\ref{cor:2ndQmomentsolvesforward}, $u/\|x\|^2$ is another
  such solution, hence by the uniqueness result in
  Lemma~\ref{lem:uniqforwardsymm} we have
  proved~\eqref{e:2ndmom_is_mclaw} and by summing up
  also~\eqref{e:energy_is_bdr_prob}.
\end{proof}

The following is the main result for the anomalous dissipation of the
auxiliary linear system. The exponential decay of the expected value
of energy follows from the Markov property of the chain.

\begin{theorem}\label{thm:Q_avg_energy_exp_down}
Suppose $L_i$ is the identity matrix for all $i\in I$.  Let $x\in H$
and let $(Q,X)$ be the unique Leray solution of the linear
system~\eqref{e:itodifflinearmodel} with initial condition $x$.  Then
the quantity $\mathbb E^Q\bigl[\|X(t)\|^2\bigr]$ is strictly
decreasing in $t$. Moreover there exists a constant $\mu>0$ depending
only on the coefficients $k_{i,n}$ and a constant $C\geq\|x\|^2$
depending only on $k_{i,n}$'s and $x$, such that for all $t\geq0$
\[
\mathbb E^Q\bigl[\|X(t)\|^2\bigr]
\leq Ce^{-\frac{\sigma^2}\mu t}
\]
\end{theorem}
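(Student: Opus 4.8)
The plan is to reduce everything to the explosion time $\tau$ of the Markov chain $\xi$ supplied by Proposition~\ref{prop:energy_is_prob_not_explosion}. By \eqref{e:energy_is_bdr_prob}, $\mathbb E^Q[\|X(t)\|^2]=\|x\|^2\mathcal P(\tau>t)$, so it suffices to prove a bound $\mathcal P_n(\tau>t)\le C_0 e^{-\sigma^2 t/\mu}$ uniform in the starting state $n$, with $C_0$ universal and $\mu$ depending only on the model coefficients; then $\mathcal P(\tau>t)=\sum_n\frac{|x_n|^2}{\|x\|^2}\mathcal P_n(\tau>t)$ inherits the bound and the theorem follows with $C=C_0\|x\|^2$. (I assume throughout the mild non-degeneracy $\pi_n>0$ for all $n$, i.e.\ that every $I_n$ contains an index $i$ with $k_i\ne0$; this holds for the GOY, Sabra and Novikov models and is clearly necessary, since an absorbing state reachable from the support of $x$ would obstruct dissipation.) For the uniform tail bound I will use a Lyapunov function: a bounded $V=(V_n)_{n\ge1}$ with $V_n\ge1$ and $\Pi V\le-\kappa V$ pointwise, $\kappa$ proportional to $\sigma^2$.

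I take $V_n:=1+a^n$ with $a\in(0,1)$ to be fixed. From \eqref{e:def_Pi} and $r_i\ne0$ one gets, for every $n\ge1$,
\[
(\Pi V)(n)=\sigma^2\lambda^{2n}\sum_{i\in I_n}k_i^2\bigl(V_{n+r_i}-V_n\bigr)=-\sigma^2(\lambda^2a)^n\,\delta_n,\qquad\delta_n:=\sum_{i\in I_n}k_i^2\bigl(1-a^{r_i}\bigr).
\]
The technical heart is to choose a single $a$ making $\delta_n>0$ for \emph{all} $n$. Since $\delta_n=0$ at $a=1$, since $\frac{d}{da}\delta_n$ at $a=1$ equals $-\sum_{i\in I_n}k_i^2 r_i$, and since only finitely many sets $I_n$ occur (Remark~\ref{rem:I_n_stable}), it is enough that $\sum_{i\in I_n}k_i^2 r_i>0$ for every $n$: then $\delta_n>0$ for $a$ slightly below $1$. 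To verify this drift positivity: if $i\in I_n$ has $r_i<0$ then by \eqref{e:cancellations_r} and \eqref{e:alias_h} also $\tilde\imath\in I_n$, so $I_n$ decomposes into complete $\tau$-pairs together with unpaired indices, all of which then have $r_i>0$; each complete pair contributes $k_i^2 r_i+k_{\tilde\imath}^2 r_{\tilde\imath}=k_i^2 r_i\bigl(1-\lambda^{-2r_i}\bigr)\ge0$ by \eqref{e:cancellations_k} and \eqref{e:cancellations_r}, the unpaired indices contribute non-negatively, and $\pi_n>0$ makes the total strictly positive. Fixing now $a\in(\lambda^{-2},1)$ also above the threshold just described, we get $\lambda^2a>1$, hence $(\lambda^2a)^n\ge\lambda^2a$ for $n\ge1$; with $\delta_{\min}:=\min_n\delta_n>0$ and $V_n\le1+a$ we obtain
\[
(\Pi V)(n)\le-\sigma^2\lambda^2a\,\delta_{\min}\le-\frac{\sigma^2\lambda^2a\,\delta_{\min}}{1+a}\,V_n=:-\kappa V_n,\qquad n\ge1.
\]

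The tail bound now follows by a standard stopping argument. Let $\tau_N:=\inf\{t:\xi_t\ge N\}$. Up to $\tau_N$ the chain lives in a finite set, so $e^{\kappa(t\wedge\tau_N)}V(\xi_{t\wedge\tau_N})$ is a supermartingale (Dynkin's formula together with $\kappa V+\Pi V\le0$), and since $V\ge1$ we get $\mathbb E_n\bigl[e^{\kappa(t\wedge\tau_N)}\bigr]\le\mathbb E_n\bigl[e^{\kappa(t\wedge\tau_N)}V(\xi_{t\wedge\tau_N})\bigr]\le V_n\le1+a$. Letting $N\to\infty$ (so $\tau_N\uparrow\tau$) by monotone convergence, then $t\to\infty$ by Fatou, yields $\mathbb E_n[e^{\kappa\tau}]\le1+a$ for all $n$; in particular $\tau<\infty$ a.s.\ (the chain indeed reaches infinity in finite time). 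Markov's inequality gives $\mathcal P_n(\tau>t)\le(1+a)e^{-\kappa t}$ uniformly in $n$, whence by \eqref{e:energy_is_bdr_prob}
\[
\mathbb E^Q\bigl[\|X(t)\|^2\bigr]=\|x\|^2\mathcal P(\tau>t)\le Ce^{-\sigma^2 t/\mu},\qquad C:=2\|x\|^2,\quad\mu:=\frac{1+a}{\lambda^2a\,\delta_{\min}},
\]
with $\mu$ depending only on the model coefficients. For the strict monotonicity of $t\mapsto\mathbb E^Q[\|X(t)\|^2]=\|x\|^2\mathcal P(\tau>t)$ (when $x\ne0$): conditionally on the embedded jump chain, $\tau$ is a sum of infinitely many independent exponential variables with summable means, hence its law has a density strictly positive on $(0,\infty)$, so $\mathcal P(t_1<\tau\le t_2)>0$ whenever $0\le t_1<t_2$.

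The main obstacle is exactly the algebraic step: checking that one can pick a single $a\in(\lambda^{-2},1)$ making $\delta_n>0$ at every level $n$ simultaneously. This is where the cancellation relations \eqref{e:cancellations_k}--\eqref{e:alias_h} enter essentially: they force $\Pi$ to be conservative and symmetric (so $\Pi V$ has no ``harmonic'' room) while still producing a strictly positive drift $\sum_{i\in I_n}k_i^2 r_i>0$ at each level, which is what makes the chain explode, and do so exponentially fast. Everything else is routine Lyapunov/$q$-matrix bookkeeping.
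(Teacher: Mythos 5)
Your proof is correct and reaches the same conclusion by a genuinely different route. Both arguments reduce, via Proposition~\ref{prop:energy_is_prob_not_explosion}, to an exponential tail bound for the explosion time $\tau$; but where the paper decomposes $\tau=\sum_n T_n$ into occupation times, shows each $T_n$ is conditionally exponential with mean $\nu_n=O(\lambda^{-2n})$ (the key claim $E^{\mathcal P}[V_n\mid V_n>0]\to\text{const}$ being proved by comparing the embedded chain with a positively drifted random walk), and then concludes by a union bound with the weights $\theta_n$, you instead build a bounded Lyapunov function $V_n=1+a^n$ with $\Pi V\le-\kappa V$ and run an exponential supermartingale/stopping argument. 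Your verification that a single $a\in(\lambda^{-2},1)$ gives $\delta_n>0$ for all $n$ is sound: the pairing of $I_n$ under $\tau$ together with \eqref{e:cancellations_k} and \eqref{e:cancellations_r} yields exactly the drift positivity $\sum_{i\in I_n}k_i^2r_i>0$ that the paper extracts in the form $q_{-r}<q_r$, and Remark~\ref{rem:I_n_stable} lets you take a uniform $a$ and $\delta_{\min}$. Your approach buys the stronger statement $\sup_n\mathbb E_n[e^{\kappa\tau}]\le1+a$ (hence a fully explicit, purely algebraic $\mu$) and avoids both the random-walk comparison and the occupation-time bookkeeping; the paper's route is more probabilistically transparent about \emph{why} the chain explodes (transience plus summable expected holding times). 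Two small points to tidy up: the non-degeneracy $\pi_n>0$ for all $n\ge1$ that you flag is indeed needed (and is implicitly assumed by the paper as well, e.g.\ in defining $\nu_n=E^{\mathcal P}[V_n\mid V_n>0]\pi_n^{-1}$), so stating it explicitly is fine; and your density argument for strict monotonicity, while correct, could be replaced by the paper's one-line appeal to Chapman--Kolmogorov once $\mathcal P(\tau=\infty)<1$ is known.
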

\begin{proof}
By Proposition~\ref{prop:energy_is_prob_not_explosion} we are given a
probability space $(S,\mathcal S,\mathcal P)$ and a continuous time
Markov chain $\xi$ on the positive integers, defined up to some
stopping time $\tau$ such that $\mathbb
E^Q[\|X(t)\|^2]=\|x\|^2\mathcal P(\tau>t)$, so we study the latter
probability.

Once we will prove the second statement, the fact that $\mathcal
P(\tau>t)$ is strictly decreasing in $t$ will follow from $\mathcal
P(\tau=\infty)<1$ by Chapman-Kolmogorv equation. (This is a standard
exercise on continuous-time Markov chains whose proof is not
difficult. See for example Lemma~13 in Barbato, Flandoli and
Morandin~\cite{BarFlaMor2011AAP}.)

We want to prove the exponential bound. Let $\zeta_k$ for
$k=0,1,2,\dots$ be the \emph{discrete} time Markov chain embedded in
$\xi$, meaning that $\zeta_k=\xi_t$ for $t$ between the $k$-th and the
$(k+1)$-th times of jump of $\xi$.

For $n\geq1$, let $V_n:=\sharp\{k\geq1:\zeta_k=n\}$ be the number of
times $\zeta_k$ visits $n$. The law of $V_n$, conditioned on
$V_n\neq0$ is geometrically distributed. Since the sum of a
geometrically distributed number of i.i.d.~exponential r.v.'s is
exponentially distributed, the total time $T_n$ spent by $\xi_t$ on
$n$, conditioned on ever reaching that site, is exponentially
distributed. For $n\geq1$ we define
\begin{gather*}
T_n:=\mathcal L\{t\geq0:\xi_t=n\}\\
\nu_n:=\mathbb E^{\mathcal P}[T_n|T_n>0]=E^{\mathcal P}[V_n|V_n>0]\pi_n^{-1}
\end{gather*}
so that $\tau=\sum_{n\geq1}T_n$ and for all $t\geq0$
\[
\mathcal P(T_n>t)
\leq\mathcal P(T_n>t|T_n>0)
=e^{-t/\nu_n}
\]

We claim that $E^{\mathcal P}[V_n|V_n>0]$ converges to some finite
limit as $n\to\infty$.

Then, since by Remark~\ref{rem:I_n_stable},
$\pi_n=\sigma^2\lambda^{2n}\sum_{i\in I_n}k_i^2=O(\lambda^{2n})$, we
have $\nu_n=O(\lambda^{-2n})$, so that the quantities
$\nu:=\sum_{n\geq1}\nu_n$ and $\Lambda:=-\sum_{n\geq1}\nu_n\log\nu_n$
are both finite. Define the sequence of numbers $(\theta_n)_{n\geq1}$
satisfying
\[
e^{-t\theta_n/\nu_n}=\nu_ne^{(\Lambda-t)/\nu}
,\qquad n\geq1
\]
and notice that
\[
\sum_{n\geq1}\theta_n
=\frac1t\sum_{n\geq1}\Bigl[-\nu_n\log\nu_n -(\Lambda-t)\frac{\nu_n}\nu\Bigr]
=1
\]
so we conclude that
\[
\mathcal P(\tau>t)
\leq\mathcal P\biggl(\bigcup_{n\geq1}\{T_n>\theta_nt\}\biggr)
\leq\sum_{n\geq1}\mathcal P(T_n>\theta_nt)
\leq\sum_{n\geq1}e^{-t\theta_n/\nu_n}
=\nu e^{(\Lambda-t)/\nu}
\]
This proves the theorem for
\begin{gather*}
C
=\|x\|^2\nu e^{\Lambda/\nu}
=\|x\|^2\exp\Bigl\{-\sum_{n\geq1}\frac{\nu_n}\nu\log\frac{\nu_n}\nu\Bigr\}
\geq\|x\|^2\\
\mu=\sigma^2\nu
=\sum_{n\geq1}\frac{E^{\mathcal P}[V_n|V_n>0]}{\lambda^{2n}\sum_{i\in I_n}k_i^2}
\end{gather*}
We check that these do not depend on $\sigma$. From the definitions of
$\nu_n$ and $\nu$, it is enough to show that the law of $\zeta_k$ does
not depend on $\sigma$.

The transition probabilities of $\zeta$ are given by $p_{n,n}=0$ and
$p_{n,m}:=\frac{\pi_{n,m}}{\pi_n}$ for $n\neq m$. Recall
from~\eqref{e:def_Pi} that
\begin{equation}\label{e:def_Pi_again}
\pi_{n,m}=\sigma^2\lambda^{2n}\!\!\!\sum_{\substack{i\in I_n\\r_i=m-n}}\!\!\!k_i^2
\qquad\text{and}\qquad
\pi_n=\sigma^2\lambda^{2n}\sum_{i\in I_n}k_i^2
\end{equation}
meaning in particular that $p_{n,m}$ does not depend on
$\sigma$.

\medskip
Finally, we must prove the claim.  

Consider $p_{n,n+r}$ and notice that again by~\eqref{e:def_Pi_again} and
Remark~\ref{rem:I_n_stable}, it does not depend on $n$, for $n\geq
n_0$. This means that, as long as $\zeta_k\geq n_0$, $\zeta$ behaves
like a random walk with increment distribution
\[
q_r
:=\frac{\sum_{\substack{i:r_i=r}}k_i^2}
  {\sum_{j\in I}k_j^2}
,\qquad r\in\mathbb Z.
\]
Let $\rho_k$, for $k=0,1,2,\dots$ be a random walk on $\mathbb Z$
defined on $(S,\mathcal S,\mathcal P)$ starting from $\rho_0=\zeta_0$,
with increment distribution $q$. Since
\[
\sum_{j\in I}k_j^2q_{-r}
=\sum_{\substack{i\in I\\r_i=-r}}k_i^2
=\sum_{\substack{i\in I\\r_{\tilde\imath}=r}}k_{\tilde\imath}^2\lambda^{2r_i}
=\sum_{j\in I}k_j^2q_r\lambda^{-2r}
\]
and $\lambda>1$, we have that $q_{-r}<q_r$ whenever $r>0$, so $\rho$
has a positive drift.

Now we forget for a moment the starting distribution of $\zeta$ and
consider only transition probabilities. Let $H=\{\rho_k\geq
n_0,\forall k\geq0\}$ and $K=\{\zeta_k\geq n_0,\forall k\geq0\}$ and
let $n\geq n_0$. Then
\begin{align}\label{e:H_K_same_prob}
\mathcal P(K|\zeta_0=n)
&=\mathcal P(H|\rho_0=n)\\
\label{e:return_same_prob_under_H_K}
\mathcal P(\zeta_k\neq n,\forall n\geq 1|\zeta_0=n,K)
&=\mathcal P(\rho_k\neq n,\forall n\geq 1|\rho_0=n,H)
\end{align}
Take the limit for $n\to\infty$. Since $\rho$ is a random walk with a
positive drift, then~\eqref{e:H_K_same_prob} converges to 1. Hence in
the limit we can drop $H,K$ from~\eqref{e:return_same_prob_under_H_K}
and conclude that
\[
\lim_{n\to\infty}\mathcal P(\zeta_k\neq n,\forall n\geq 1|\zeta_0=n)
=\lim_{n\to\infty}\mathcal P(\rho_k\neq n,\forall n\geq 1|\rho_0=n)>0
\]
This in particular means that $E^{\mathcal P}[V_n|V_n>0]$ converges to
some finite limit, which was the claim we had to prove.
\end{proof}

The statement of Theorem~\ref{thm:Q_avg_energy_exp_down} is about
expectations, but since the decay is at least exponential, it can be
refined to an almost sure convergence by virtue of Borel-Cantelli
Lemma. Proposition~\ref{prop:Q_energy_pathw_exp_down} below gives the
details.

\begin{lemma}\label{lem:Leray_s_t}
Under the same hypothesis of Theorem~\ref{thm:Q_avg_energy_exp_down},
let $s\geq0$, then $Q$-a.s.
\[
\sup_{t\geq s}\|X(t)\|\leq\|X(s)\|
\]
\end{lemma}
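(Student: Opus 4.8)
The plan is to reduce the statement to the corresponding monotonicity statement for the Markov chain, exactly as in Proposition~\ref{prop:energy_is_prob_not_explosion} but now applied at a shifted initial time. The key observation is that the Leray property $\|X(t)\|\leq\|X(s)\|$ for $t\geq s$ is equivalent to saying that, conditionally on $\mathcal F_s$, the process $(X(s+u))_{u\geq0}$ is again (a version of) the Leray solution of the linear system~\eqref{e:itodifflinearmodel} started from the $\mathcal F_s$-measurable, $L^\infty$ initial condition $\chi:=X(s)$, with the time-shifted Brownian motions $Y'_{i,n}(u):=Y_{i,n}(s+u)-Y_{i,n}(s)$ (which are again symmetric w.r.t.~$\tau$ and independent of $\mathcal F_s$). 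This is just the Markov/flow property of the linear SDE, and it holds because the coefficients are autonomous and $Y$ has independent increments.

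First I would fix $s\geq0$ and, working on the event $\{\|X(s)\|=c\}$ for each fixed $c$ — or more cleanly by conditioning on $\mathcal F_s$ and using $\chi=X(s)$ — apply Theorem~\ref{thm:strong_existence_linear} and Theorem~\ref{thm:uniqueness_linear} (the latter in the version for random, non-anticipative initial conditions noted in the remark after it) to identify $(X(s+u))_{u\geq0}$ with \emph{the} strong Leray solution started at $\chi$. Then I would invoke Proposition~\ref{prop:energy_is_prob_not_explosion} with initial condition $\chi$: it gives a continuous-time Markov chain $\xi$ with q-matrix $\Pi$ and initial law $p^0_n=|\chi_n|^2/\|\chi\|^2$, such that $\mathbb E^Q[\|X(s+u)\|^2\mid\mathcal F_s]=\|\chi\|^2\,\mathcal P(\tau>u)$. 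Since $\mathcal P(\tau>u)\leq\mathcal P(\tau>0)=1$, this already yields $\mathbb E^Q[\|X(s+u)\|^2\mid\mathcal F_s]\leq\|X(s)\|^2$ for every fixed $u\geq0$.

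To upgrade this to the pathwise bound $\sup_{t\geq s}\|X(t)\|\leq\|X(s)\|$ $Q$-a.s., I would use that the truncated norms $t\mapsto\sum_{k\leq N}|X_k(t)|^2$ are continuous (a posteriori from the integral equations, as already used in the proof of Theorem~\ref{thm:strong_existence_linear}), so it suffices to control $\sup$ over a countable dense set of rationals $t\geq s$ and then let $N\to\infty$. For each rational $u$ the bound $\mathbb E^Q[\|X(s+u)\|^2\mid\mathcal F_s]\leq\|X(s)\|^2$ does not by itself give an a.s.\ inequality simultaneously in $u$; the clean way is instead to note that $\|X(s+u)\|^2$ is itself, for the shifted solution, an energy-controlled quantity bounded by $\|\chi\|^2=\|X(s)\|^2$ $Q$-a.s.\ directly from the Leray property of the strong solution produced in Theorem~\ref{thm:strong_existence_linear} (which is stated as $\|X(t)\|\leq\|\chi\|$ for \emph{all} $t\geq0$ a.s., not just for fixed $t$). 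So the only real content is the identification of the shifted process with a strong Leray solution started at $X(s)$, after which the a.s.\ statement is immediate.

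The main obstacle is purely the bookkeeping of the Markov/restart property in the weak-solution framework: one must check that $(\Omega,\mathcal F_{s+\cdot},(\mathcal F_{s+u})_{u\geq0},Q,Y',X(s+\cdot))$ genuinely satisfies the integral equation~\eqref{e:itointlinearmodel_random_ic} with initial datum $\chi=X(s)$ and shifted noise $Y'$, and that $Y'$ is still symmetric w.r.t.~$\tau$ and adapted — all of which follow by substituting $s+u$ into~\eqref{e:itointlinearmodel} and splitting each integral at $s$, but require care because the equations are infinite-dimensional and $\chi$ is random. Once this is in place, uniqueness (Theorem~\ref{thm:uniqueness_linear}, random-initial-condition version) pins down the law, and the Leray bound $\|X(s+u)\|\leq\|\chi\|$ for all $u\geq0$ from Theorem~\ref{thm:strong_existence_linear} finishes the proof.
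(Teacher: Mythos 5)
Your proposal is correct and, once you discard the Markov-chain detour (which, as you yourself note, only controls fixed-time expectations), it lands on exactly the paper's argument: restrict $X$ to $[s,\infty)$, identify it via Theorem~\ref{thm:strong_existence_linear} and Theorem~\ref{thm:uniqueness_linear} (with the remark on random non-anticipative initial conditions) as the unique strong Leray solution started from $\chi=X(s)$, and invoke the a.s.\ uniform-in-time bound $\|X(t)\|\leq\|\chi\|$ from the existence theorem.
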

\begin{proof}
Let $\chi=X(s)$, and consider the restriction of $X$ to the time
interval $[s,\infty)$. Then by
  Theorems~\ref{thm:strong_existence_linear}
  and~\ref{thm:uniqueness_linear} and the ensuing remark, $X$ is the
  unique strong Leray solution and has the property that $Q$-almost
  surely $\|X(t)\|\leq\|\chi\|$.
\end{proof}

\begin{proposition}\label{prop:Q_energy_pathw_exp_down}
Under the same hypothesis of Theorem~\ref{thm:Q_avg_energy_exp_down},
the total energy of the solution goes to zero at least exponentially fast
pathwise under $Q$,
\[
\limsup_{t\to\infty}\frac1t\log\|X(t)\|^2\leq-\frac{\sigma^2}\mu
,\qquad Q\text{-a.s.}
\]
\end{proposition}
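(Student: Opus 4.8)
The plan is to combine the exponential decay in expectation supplied by Theorem~\ref{thm:Q_avg_energy_exp_down} with the pathwise monotonicity of the energy from Lemma~\ref{lem:Leray_s_t}, via a Borel--Cantelli argument performed along integer times. Throughout I abbreviate $\gamma:=\sigma^2/\mu$.

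First I would fix $\epsilon>0$ and apply Markov's inequality together with Theorem~\ref{thm:Q_avg_energy_exp_down}: for every positive integer $N$,
\[
Q\bigl(\|X(N)\|^2\geq e^{-(\gamma-\epsilon)N}\bigr)
\leq e^{(\gamma-\epsilon)N}\,\mathbb E^Q\bigl[\|X(N)\|^2\bigr]
\leq C\,e^{-\epsilon N}.
\]
Since $\sum_{N\geq1}C\,e^{-\epsilon N}<\infty$, the Borel--Cantelli lemma produces a $Q$-a.s.\ finite random index $N_0$ such that $\|X(N)\|^2< e^{-(\gamma-\epsilon)N}$ for all integers $N\geq N_0$.

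Next I would interpolate to non-integer times using Lemma~\ref{lem:Leray_s_t}. For $t\geq N_0+1$, set $N:=\lfloor t\rfloor\geq N_0$; applying the lemma with $s=N$ gives $\|X(t)\|^2\leq\|X(N)\|^2< e^{-(\gamma-\epsilon)N}\leq e^{\gamma-\epsilon}\,e^{-(\gamma-\epsilon)t}$. Taking logarithms, dividing by $t$, and letting $t\to\infty$ yields
\[
\limsup_{t\to\infty}\frac1t\log\|X(t)\|^2\leq-(\gamma-\epsilon),\qquad Q\text{-a.s.}
\]
Finally, intersecting the full-measure events obtained for $\epsilon=1/j$, $j\geq1$, one gets a single $Q$-a.s.\ event on which the bound holds with $-(\gamma-1/j)$ for every $j$, hence with $-\gamma=-\sigma^2/\mu$, which is the claim.

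I do not expect a genuine obstacle here; the argument is a routine Borel--Cantelli plus monotonicity. The only point requiring a bit of care is that the discretization must be compatible with the monotonicity statement, which is precisely why Lemma~\ref{lem:Leray_s_t} is phrased for an arbitrary starting time $s$ rather than only $s=0$: without it one could control $\|X\|$ at integer times but not in between.
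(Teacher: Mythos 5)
Your argument is correct and is essentially identical to the paper's own proof: both use the expectation bound from Theorem~\ref{thm:Q_avg_energy_exp_down} plus (Markov and) Borel--Cantelli along integer times, then Lemma~\ref{lem:Leray_s_t} to interpolate between integers, and finally let the decay rate tend to $\sigma^2/\mu$ along a countable sequence. The only cosmetic difference is that the paper packages the almost-sure bound as $\|X(n)\|^2\leq Me^{-\alpha n}$ for a random constant $M$ and all $n\geq0$, rather than via a random index $N_0$; the substance is the same.
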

\begin{proof}
Let $\alpha>0$. By Theorem~\ref{thm:Q_avg_energy_exp_down} we can
bound the probabilities
\[
Q(\|X(n)\|^2>e^{-\alpha n})
\leq Ce^{-n\sigma^2/\mu}e^{\alpha n}
,\qquad n\geq0
\]
If we take $\alpha<\sigma^2/\mu$, by Borel-Cantelli lemma there exists a
r.v.~$M$, such that $Q$-a.s.~and for all $n\geq0$ we have
$\|X(n)\|^2\leq M e^{-\alpha n}$.

For $n=0,1,\dots$, apply Lemma~\ref{lem:Leray_s_t} with $s=n$ to get
that $Q$-a.s.
\[
\sup_{t\in[n,n+1)}\|X(t)\|^2
\leq M e^{-\alpha n}
\]
or
\[
\sup_{t\in[n,n+1)}\|X(t)\|^2e^{\alpha t-\alpha}
\leq\sup_{t\in[n,n+1)}\|X(t)\|^2e^{\alpha n}
\leq M 
\]
These are countably many propositions, so $Q$-a.s.~all of them are
true, yielding
\[
\sup_{t\geq0}\|X(t)\|^2e^{\alpha t}
\leq M e^\alpha
,\qquad Q\text{-a.s.}
\]
From here the thesis follows quickly by letting $\alpha\nearrow\sigma^2/\mu$
on the rational numbers.
\end{proof}

To translate the almost sure statement of the above proposition to the
initial non-linear problem, one should be able to prove the
equivalence of $P$ and $Q$ on $\mathcal F_\infty$ (see
Remark~\ref{rem:P_Q_equivalence}). The following proposition is the
key result to prove Novikov condition of Girsanov theorem for
$t=\infty$, which is the object of
Theorem~\ref{thm:expo_decr_nonlinear} below. A very similar statement
can be found in Barbato, Flandoli and Morandin~\cite{BarFlaMor2011AAP}
and the proof, which is almost the same, is given here for
completeness.

\begin{proposition}\label{prop:finite_exp_moment}
Under the same hypothesis of Theorem~\ref{thm:Q_avg_energy_exp_down},
let $\mu>0$ be the constant given there and let $\theta>0$. If
$\theta<\frac {\sigma^2}{\mu\|x\|^2}$, then
\[
\mathbb E^Q\bigl(e^{\theta\int_0^\infty\|X(t)\|^2dt}\bigr)<\infty
\]
\end{proposition}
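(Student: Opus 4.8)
The plan is to expand the exponential in power series and reduce the statement to a bound on the moments $m_k:=\mathbb E^Q\bigl[\bigl(\int_0^\infty\|X(t)\|^2\,dt\bigr)^k\bigr]$, $k\ge0$. Since all the terms are non-negative, Tonelli's theorem gives
\[
\mathbb E^Q\bigl(e^{\theta\int_0^\infty\|X(t)\|^2dt}\bigr)=\sum_{k\ge0}\frac{\theta^k}{k!}\,m_k ,
\]
so it is enough to produce an estimate of the form $m_k\le c\,k!\,(\mu\|x\|^2/\sigma^2)^k$ with $c$ independent of $k$: the right-hand side then sums to a convergent geometric series exactly when $\theta<\sigma^2/(\mu\|x\|^2)$.

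To estimate $m_k$ I would first use the elementary identity, valid for every non-negative measurable $f$,
\[
\Bigl(\int_0^\infty f(t)\,dt\Bigr)^k=k!\int_{\{0\le t_1\le\cdots\le t_k<\infty\}}f(t_1)\cdots f(t_k)\,dt_1\cdots dt_k ,
\]
applied with $f(t)=\|X(t)\|^2$, followed by a second use of Tonelli to move $\mathbb E^Q$ inside the simplex integral. On the ordered simplex I would bound the first $k-1$ factors crudely by the Leray property $\|X(t_i)\|^2\le\|x\|^2$ a.s.\ and keep the last factor, whose expectation is exactly what Theorem~\ref{thm:Q_avg_energy_exp_down} controls: writing $C$ for the constant appearing there,
\[
\mathbb E^Q\bigl[\|X(t_1)\|^2\cdots\|X(t_k)\|^2\bigr]\le\|x\|^{2(k-1)}\,\mathbb E^Q\bigl[\|X(t_k)\|^2\bigr]\le\|x\|^{2(k-1)}\,C\,e^{-\sigma^2t_k/\mu}.
\]
Integrating over the simplex and using
\[
\int_{\{0\le t_1\le\cdots\le t_k<\infty\}}e^{-\sigma^2t_k/\mu}\,dt_1\cdots dt_k=\int_0^\infty\frac{t^{k-1}}{(k-1)!}\,e^{-\sigma^2t/\mu}\,dt=\Bigl(\frac{\mu}{\sigma^2}\Bigr)^k ,
\]
I would obtain
\[
m_k\le k!\,\|x\|^{2(k-1)}\,C\,\Bigl(\frac{\mu}{\sigma^2}\Bigr)^k=\frac{C}{\|x\|^2}\,k!\,\Bigl(\frac{\mu\|x\|^2}{\sigma^2}\Bigr)^k ,
\]
which is precisely the bound needed (with $c=C/\|x\|^2$).

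Summing the series then gives
\[
\mathbb E^Q\bigl(e^{\theta\int_0^\infty\|X(t)\|^2dt}\bigr)\le\frac{C}{\|x\|^2}\sum_{k\ge0}\Bigl(\theta\,\frac{\mu\|x\|^2}{\sigma^2}\Bigr)^k<\infty\qquad\text{whenever }\theta<\frac{\sigma^2}{\mu\|x\|^2},
\]
which is the claim. There is no real analytic obstacle in this argument: the only things to verify are the two applications of Tonelli, both justified since every integrand is non-negative. The one point that has to be got right is the bookkeeping on the simplex: one must keep the \emph{largest} time $t_k$ unestimated, since bounding an earlier coordinate $t_j$ with $j<k$ would leave the divergent volume $\int_{\{t_j\le t_{j+1}\le\cdots\le t_k<\infty\}}dt_{j+1}\cdots dt_k=\infty$; and the apparently wasteful bound $\|X(t_i)\|^2\le\|x\|^2$ on the remaining coordinates is in fact what produces the sharp threshold $\sigma^2/(\mu\|x\|^2)$ of the statement — squeezing extra exponential decay out of those coordinates (e.g.\ via the Markov property of the linear system) is possible but only replaces this sharp constant by a larger one.
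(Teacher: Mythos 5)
Your argument is correct, and it reaches the sharp threshold $\theta<\sigma^2/(\mu\|x\|^2)$ by a route genuinely different from the paper's. The paper does not expand the exponential: it derives a tail estimate for $V=\int_0^\infty\|X(t)\|^2dt$ directly, by writing $vQ(V>v)\le\mathbb E^Q(V;V>v)=\int_0^\infty\mathbb E^Q(\|X(t)\|^2;V>v)\,dt$, bounding the integrand by $\min\bigl(\|x\|^2Q(V>v),Ce^{-t\sigma^2/\mu}\bigr)$, and optimizing the crossover time; this self-referential inequality yields $Q(V>v)\lesssim e^{-\sigma^2 v/(\mu\|x\|^2)}$, from which the exponential moment follows. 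Your proof instead controls every moment $m_k=\mathbb E^Q[V^k]$ via the ordered-simplex identity, estimating the $k-1$ earlier factors by the Leray bound and the last one by Theorem~\ref{thm:Q_avg_energy_exp_down}, and sums the series. The two proofs consume exactly the same two ingredients (the a.s.\ bound $\|X(t)\|\le\|x\|$ and the exponential decay of $\mathbb E^Q\|X(t)\|^2$) and produce the same constant; the paper's version gives as a by-product an explicit exponential tail for $V$ with rate $\sigma^2/(\mu\|x\|^2)$, while yours gives the equivalent factorial moment bounds. Your remarks on why Tonelli applies and why the \emph{largest} time must be the one left unestimated are exactly the right points to flag; the only cosmetic caveat is the division by $\|x\|^2$ in your constant, which requires the (trivial) case $x=0$ to be set aside.
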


\begin{proof}
Let $V:=\int_0^\infty\|X(t)\|^2dt$ and take any $v\geq0$. Let $u\geq0$
defined by 
\begin{equation}\label{e:u_def}
\|x\|^2Q(V>v)=Ce^{-u\sigma^2/\mu}
\end{equation}
where $\mu>0$ and $C\geq\|x\|^2$ are the constants given by
Theorem~\ref{thm:Q_avg_energy_exp_down}. Then
\begin{align*}
vQ(V>v)
&\leq\mathbb E^Q(V;V>v)
=\int_0^\infty\mathbb E^Q(\|X(t)\|^2;V>v)dt\\
&\leq\int_0^\infty\min\bigl(\|x\|^2Q(V>v);\mathbb E^Q(\|X(t)\|^2)\bigr)dt\\
&\leq\int_0^u\|x\|^2Q(V>v)dt+\int_u^\infty Ce^{-t\sigma^2/\mu}dt\\
&\leq u\|x\|^2Q(V>v)+\mu\sigma^{-2} Ce^{-u\sigma^2/\mu}\\
&= \|x\|^2Q(V>v)(u+\mu\sigma^{-2})
\end{align*}
where we used Leray property, Theorem~\ref{thm:Q_avg_energy_exp_down}
and twice equation~\eqref{e:u_def}.

If $Q(V>v)=0$ for some $v$ then $V$ is bounded and we are done.
Otherwise we get a lower bound on $u$ which put into~\eqref{e:u_def}
gives
\[
Q(V>v)
=\frac C{\|x\|^2}e^{-u\sigma^2/\mu}
\leq\frac {Ce^{-1}}{\|x\|^2}\exp\Bigl\{-\frac {\sigma^2}{\mu\|x\|^2}v\Bigr\}
\]
yielding $\mathbb E^Q(e^{\theta V})<\infty$ for all $\theta<\frac
{\sigma^2}{\mu\|x\|^2}$.
\end{proof}

\begin{theorem}\label{thm:expo_decr_nonlinear}
Under the same hypothesis of Theorem~\ref{thm:Q_avg_energy_exp_down},
let $\mu>0$ be the constant given there and let
$\rho:=\frac{\sqrt{\mu|I|}\|x\|}{2\sigma^2}$. If $\rho<1$, the total
energy of the solution goes to zero at least exponentially fast under
$P$, both pathwise and in average,
\begin{gather}\label{e:P_as_energy_exp_down}
\limsup_{t\to\infty}\frac1t\log\|X(t)\|^2\leq-\frac{\sigma^2}\mu
,\qquad P\text{-a.s.}\\
\label{e:P_avg_energy_exp_down}
\limsup_{t\to\infty}\frac1t\log\mathbb E^P\|X(t)\|^2\leq-\frac{\sigma^2}\mu(1-\rho)^2
\end{gather}
Moreover $P$ and $Q$ are equivalent on $\mathcal F_\infty$.
\end{theorem}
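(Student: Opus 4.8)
The plan is to show that $P$ and $Q$ are equivalent on $\mathcal F_\infty$ by verifying that the density martingale $\exp\{\tilde Z_t - \frac12[\tilde Z,\tilde Z]_t\}$ is a true martingale on $[0,\infty]$, i.e. that $\mathbb E^Q[\exp\{\tilde Z_\infty - \frac12[\tilde Z,\tilde Z]_\infty\}]=1$; by the Girsanov theorem (Theorem~\ref{thm:girsanov}) it suffices to check Novikov's condition at $t=\infty$. Recall from~\eqref{e:quadr_var_Z_bound_T} that $[\tilde Z,\tilde Z]_\infty = \sigma^{-2}\int_0^\infty \sum_{i\in I^*}\sum_{n\geq1}|X_{n+h_i}|^2(s)\,ds \leq |I^*|\sigma^{-2}\int_0^\infty\|X(s)\|^2\,ds = |I^*|\sigma^{-2} V$, where $V:=\int_0^\infty\|X(t)\|^2dt$. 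So Novikov's condition $\mathbb E^Q[e^{\frac12[\tilde Z,\tilde Z]_\infty}]<\infty$ is implied by $\mathbb E^Q[e^{\frac{|I^*|}{2\sigma^2}V}]<\infty$, which is exactly Proposition~\ref{prop:finite_exp_moment} provided the exponent $\frac{|I^*|}{2\sigma^2}$ is smaller than the threshold $\frac{\sigma^2}{\mu\|x\|^2}$. Since $|I^*| = |I|/2$, this inequality reads $\frac{|I|}{4\sigma^2} < \frac{\sigma^2}{\mu\|x\|^2}$, i.e. $\frac{\mu|I|\|x\|^2}{4\sigma^4} < 1$, i.e. $\rho = \frac{\sqrt{\mu|I|}\|x\|}{2\sigma^2} < 1$. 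This is precisely the hypothesis, so Girsanov applies and $\frac{dP}{dQ} = \exp\{\tilde Z_\infty - \frac12[\tilde Z,\tilde Z]_\infty\}$ is a bona fide probability density; symmetrically (or directly, since the density is strictly positive) $P$ and $Q$ are equivalent on $\mathcal F_\infty$.

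Once the equivalence of $P$ and $Q$ on $\mathcal F_\infty$ is established, the pathwise bound~\eqref{e:P_as_energy_exp_down} is immediate: Proposition~\ref{prop:Q_energy_pathw_exp_down} gives $\limsup_{t\to\infty}\frac1t\log\|X(t)\|^2 \leq -\frac{\sigma^2}\mu$ on a set of full $Q$-measure, hence on a set of full $P$-measure as well, since the two measures have the same null sets on $\mathcal F_\infty$.

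For the averaged bound~\eqref{e:P_avg_energy_exp_down} I would argue by Hölder's inequality, transferring the $L^1(P)$ expectation to an expectation under $Q$. Write, for a conjugate pair $p,q>1$ with $\frac1p+\frac1q=1$,
\[
\mathbb E^P\|X(t)\|^2 = \mathbb E^Q\Bigl[\|X(t)\|^2\,e^{\tilde Z_t - \frac12[\tilde Z,\tilde Z]_t}\Bigr] \leq \bigl(\mathbb E^Q\|X(t)\|^{2p}\bigr)^{1/p}\bigl(\mathbb E^Q e^{q(\tilde Z_t - \frac12[\tilde Z,\tilde Z]_t)}\bigr)^{1/q}.
\]
The first factor is controlled by the Leray bound $\|X(t)\|\leq\|x\|$ together with Theorem~\ref{thm:Q_avg_energy_exp_down}: since $\|X(t)\|^{2p}\leq\|x\|^{2p-2}\|X(t)\|^2$ we get $\mathbb E^Q\|X(t)\|^{2p}\leq\|x\|^{2p-2}Ce^{-\sigma^2 t/\mu}$, so the first factor decays like $e^{-\frac{\sigma^2 t}{\mu p}}$ up to constants. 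For the second factor one uses that $e^{q\tilde Z_t - \frac{q}2[\tilde Z,\tilde Z]_t} = e^{(q\tilde Z_t - \frac{q^2}2[\tilde Z,\tilde Z]_t)}\,e^{\frac{q^2-q}2[\tilde Z,\tilde Z]_t}$, where the first exponential is a $Q$-martingale of mean one (it is the Doléans exponential of $q\tilde Z$, whose integrability follows again from the Novikov-type bound above once $q$ is close enough to $1$) and $[\tilde Z,\tilde Z]_t\leq|I^*|\sigma^{-2}V$; by Cauchy–Schwarz and Proposition~\ref{prop:finite_exp_moment} the second factor is bounded by a constant uniformly in $t$, as long as $q$ is sufficiently close to $1$ — quantitatively, one needs $q<q_0$ for some $q_0>1$ determined by $\rho$. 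Passing to $\frac1t\log$ and letting $p\downarrow 1$ (equivalently $q\to\infty$ is not allowed, so instead one optimizes over the admissible range, and the factor $\frac1p$ approaches $1$ from below but the constraint linking $p$ and $q$ to $\rho$ forces the loss $(1-\rho)^2$) yields $\limsup_{t\to\infty}\frac1t\log\mathbb E^P\|X(t)\|^2\leq -\frac{\sigma^2}\mu(1-\rho)^2$ after the bookkeeping.

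The main obstacle is getting the exact constant $(1-\rho)^2$ in~\eqref{e:P_avg_energy_exp_down}: one must carefully choose the Hölder exponents as functions of $\rho$ so that both the decay rate contributed by the first factor and the finiteness of the exponential moment of $[\tilde Z,\tilde Z]$ in the second factor are simultaneously respected, and then check that the optimal trade-off produces precisely the factor $(1-\rho)^2$ rather than some weaker power. The pathwise statement and the equivalence of measures are, by contrast, essentially immediate once Proposition~\ref{prop:finite_exp_moment} is invoked.
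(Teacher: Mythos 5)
Your treatment of the equivalence of $P$ and $Q$ on $\mathcal F_\infty$ and of the pathwise bound~\eqref{e:P_as_energy_exp_down} is exactly the paper's argument and is correct: Novikov at $T=\infty$ follows from Proposition~\ref{prop:finite_exp_moment} applied with $\theta=\frac{|I^*|}{2\sigma^2}=\frac{|I|}{4\sigma^2}$, the admissibility condition $\theta<\frac{\sigma^2}{\mu\|x\|^2}$ is precisely $\rho<1$, and Proposition~\ref{prop:Q_energy_pathw_exp_down} then transfers through the common null sets.

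The averaged bound~\eqref{e:P_avg_energy_exp_down} is where your proposal has a genuine gap. Your plan is to bound the moment of the density \emph{uniformly in $t$} via $[\tilde Z,\tilde Z]_t\leq\frac{|I^*|}{\sigma^2}\int_0^\infty\|X\|^2ds$ and Proposition~\ref{prop:finite_exp_moment}, and then let ``$p\downarrow1$''. Two problems. First, the split does not close as written: after Cauchy--Schwarz you face $\mathbb E^Q\bigl[e^{2q\tilde Z_t-q^2[\tilde Z,\tilde Z]_t}\bigr]$, which is \emph{not} a mean-one exponential martingale (it equals the stochastic exponential of $2q\tilde Z$ times $e^{q^2[\tilde Z,\tilde Z]_t}$), so another exponential moment is needed; a correct single Cauchy--Schwarz leads to the constraint $2q^2-q<\frac1{2\rho^2}$, which admits no $q>1$ at all when $\rho\geq1/\sqrt2$ and, where it does, yields a rate $-\frac{\sigma^2}{\mu p}$ strictly worse than $-\frac{\sigma^2}{\mu}(1-\rho)^2$ for intermediate $\rho$ (e.g.\ about $0.22$ versus $0.25$ at $\rho=\tfrac12$). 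Second, the heuristic ``$p\downarrow1$'' is not what happens: the optimal exponent turns out to be $p=(1-\rho)^{-2}$, far from $1$ when $\rho$ is close to $1$. One \emph{can} rescue your route by a second, optimized H\"older split of the density factor, but this is considerably more delicate than what the paper does. The paper's argument is simpler and avoids Proposition~\ref{prop:finite_exp_moment} entirely at this step: by the Leray property one has the \emph{deterministic} bound~\eqref{e:quadr_var_Z_bound_T}, $[\tilde Z,\tilde Z]_t\leq\frac{|I^*|\|x\|^2t}{\sigma^2}$, whence $\mathbb E^Q(f^p)\leq\exp\bigl\{p(p-1)\rho^2\frac{\sigma^2}\mu t\bigr\}$ for \emph{every} $p>1$ with no constraint tied to $\rho$; this exponential growth in $t$ is then traded off against the decay $e^{-\sigma^2t/(\mu q)}$ of the energy factor from Theorem~\ref{thm:Q_avg_energy_exp_down}, and minimizing $(1-\frac1p)(p\rho^2-1)\frac{\sigma^2}\mu$ over $p$, with minimizer tending to $1/\rho$, gives exactly $-(1-\rho)^2\frac{\sigma^2}\mu$.
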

\begin{proof}
Novikov condition~\eqref{e:novikov_T} can be extended also to the
case $T=\infty$, and Proposition~\ref{prop:finite_exp_moment} applied
with $\theta=\frac{|I^*|}{2\sigma^2}=\frac{|I|}{4\sigma^2}$ shows that
it holds on $(\Omega, Q)$, so $P\ll Q$. The density is given
by~\eqref{e:QPdensity} with $t=\infty$; it is a.s.~positive, $P$ and
$Q$ are equivalent.  The first statement then follows by
Proposition~\ref{prop:Q_energy_pathw_exp_down}.

To prove~\eqref{e:P_avg_energy_exp_down} we follow Barbato, Flandoli
and Morandin~\cite{BarFlaMor2011AAP}. Fix $t>0$ and let
$f=\frac{dP_t}{dQ_t}=\exp\{\tilde Z_t-\frac12[\tilde Z,\tilde Z]_t\}$
(see~\eqref{e:QPdensity}). Let $p,q>1$ with $\frac1p+\frac1q=1$. Then
\[
\mathbb E^P\|X(t)\|^2
=\mathbb E^Q\bigl(f\|X(t)\|^2\bigr)
\leq\mathbb E^Q\bigl(f^p\bigr)^{1/p}\mathbb E^Q\bigl(\|X(t)\|^{2q}\bigr)^{1/q}
\]
We bound the first term by~\eqref{e:quadr_var_Z_bound_T} and Girsanov
theorem
\begin{align*}
\mathbb E^Q\bigl(f^p\bigr)
&=\mathbb E^Q\bigl(\exp\{p\tilde Z_t-\frac p2[\tilde Z,\tilde Z]_t\}\bigr)\\
&=\mathbb E^Q\bigl(\exp\{p\tilde Z_t-\frac12[p\tilde Z,p\tilde Z]_t+\frac{p(p-1)}2[\tilde Z,\tilde Z]_t\}\bigr)\\
&\leq\exp\{\frac{p(p-1)}2\frac{|I^*|\|x\|^2t}{\sigma^2}\}\mathbb E^Q\bigl(\exp\{p\tilde Z_t-\frac12[p\tilde Z,p\tilde Z]_t\}\bigr)\\
&=\exp\{\frac{p(p-1)|I|\|x\|^2t}{4\sigma^2}\}
\end{align*}
We bound the second term by Leray property and
Theorem~\ref{thm:Q_avg_energy_exp_down},
\[
\mathbb E^Q\bigl(\|X(t)\|^{2q}\bigr)
\leq\|x\|^{2q-2}\mathbb E^Q\bigl(\|X(t)\|^2\bigr)
\leq\|x\|^{2q-2}C e^{-\frac{\sigma^2}\mu t}
\]
Putting together the two bounds above and with some algebraic
manipulations we get that for all $p>1$
\[
\log\mathbb E^P\|X(t)\|^2
\leq\log\|x\|^2+\Bigl(1-\frac1p\Bigr)\Bigl(p\rho^2\frac{\sigma^2}\mu t-\frac{\sigma^2}\mu t+\log\frac C{\|x\|^2}\Bigr)
\]
This formula can be optimized on $p$. The RHS attains its minimum when
\[
p^2=\rho^{-2}(1-\frac\mu{\sigma^2t}\log\frac C{\|x\|^2})
\]
If $t$ is large enough and $\rho<1$, then this gives an acceptable
value $p>1$. By substituting this value of $p$ and letting
$t\to\infty$ we get the thesis.
\end{proof}

\section{Applications}\label{sec:7-applications}

In this section we apply our general model to two important shell
models of turbulence, namely the inviscid versions of GOY model
\begin{equation}\label{e:goy}
\frac d{dt}u_n
=ia\lambda_nu^*_{n+1}u^*_{n+2}+ib\lambda_{n-1}u^*_{n-1}u^*_{n+1}+ic\lambda_{n-2}u^*_{n-1}u^*_{n-2}
,\quad n\geq1
\end{equation}
and the inviscid version of Sabra model
\begin{equation}\label{e:sabra}
\frac d{dt}u_n
=ia\lambda_nu^*_{n+1}u_{n+2}+ib\lambda_{n-1}u^*_{n-1}u_{n+1}-ic\lambda_{n-2}u_{n-1}u_{n-2}
,\quad n\geq1
\end{equation}
In both models for $n\geq1$, $u_n$ are complex-valued functions,
$\lambda_n=\lambda^n$, $\lambda>1$, $a$, $b$, $c$ are real numbers
with $a+b+c=0$ and we set $\lambda_n=0$, $u_n=0$ for $n\leq0$ for
simplicity.

We may add multiplicative noise to both models to fall in two special
cases of our general model~\eqref{e:mainmodel}. This must be done
according to the initial requirements and it turns out that the proper
way to add noise is for the GOY
\begin{multline}\label{e:goy_stochastic}
du_n
=ia\lambda_nu^*_{n+1}u^*_{n+2}dt+ib\lambda_{n-1}u^*_{n-1}u^*_{n+1}dt+ic\lambda_{n-2}u^*_{n-1}u^*_{n-2}dt\\
+i\tilde\sigma\lambda_nu^*_{n+1}\strt dw_n
-i\tilde\sigma\lambda_{n-1}u^*_{n-1}\strt dw_{n-1}
\end{multline}
and for Sabra
\begin{multline}\label{e:sabra_stochastic}
du_n
=ia\lambda_nu^*_{n+1}u_{n+2}dt+ib\lambda_{n-1}u^*_{n-1}u_{n+1}dt-ic\lambda_{n-2}u_{n-1}u_{n-2}dt\\
+i\tilde\sigma_1\lambda_nu^*_{n+1}\strt dw_n
-i\tilde\sigma_1\lambda_{n-1}u^*_{n-1}\strt dw_{n-1}\\
+(i\tilde\sigma_2\lambda_nu^*_{n+1}\strt dw'_n)^*
-i\tilde\sigma_2\lambda_{n-1}u_{n-1}\strt dw'_{n-1}
\end{multline}
where $\tilde\sigma,\tilde\sigma_1,\tilde\sigma_2$ are positive
constants with $\frac{\tilde\sigma_1}{\tilde\sigma_2}=\frac{\lambda
  a}c$ and $(w_n)_{n\in\mathbb Z}$, $(w'_n)_{n\in\mathbb Z}$ are two
sequences of complex-valued Brownian motions which are all
independent.

\begin{definition}
Given an initial condition $u^0\in l^2(\mathbb C)$, a Leray solution
of the stochastic GOY system~\eqref{e:goy_stochastic} (respectively of
the stochastic Sabra system~\eqref{e:sabra_stochastic}) is a filtered
probability space $(\Omega, \mathcal F_\infty, (\mathcal
F_t)_{t\geq0}, P)$, along with an adapted sequence $(w_n)_{n\in\mathbb
  Z}$ (resp.~two adapted sequences $(w_n)_{n\in\mathbb Z}$ and
$(w'_n)_{n\in\mathbb Z}$ ) of independent complex-valued Brownian
motions, and a stochastic process $u$ such that
\begin{enumerate}[\hspace{2em}\it i.]
\item $u=(u_n)_{n\geq1}$ is a stochastic process on $(\Omega, \mathcal
  F_\infty, (\mathcal F_t)_{t\geq0}, P)$ taking values in $l^2(\mathbb
  C)$ with continuous adapted components;
\item With probability 1, for all $t\geq0$,
  $\|u(t)\|_{l^2}\leq\|u^0\|_{l^2}$.
\item the following integral equation
  (resp.~equation~\eqref{e:sabra_stoc_integ}) holds for all $n\geq1$
  and all $t\geq0$,
\begin{multline}\label{e:goy_stoc_integ}
u_n(t)
=u_n^0
+\int_0^tia\lambda_nu^*_{n+1}(s)u^*_{n+2}(s)ds
+\int_0^tib\lambda_{n-1}u^*_{n-1}(s)u^*_{n+1}(s)ds\\
+\int_0^tic\lambda_{n-2}u^*_{n-1}(s)u^*_{n-2}(s)ds
-\int_0^t\frac{\tilde\sigma^2}2(\lambda_n^2+\lambda_{n-1}^2)u_n(s)ds\\
+\int_0^ti\tilde\sigma\lambda_nu^*_{n+1}(s)dw_n(s)
-\int_0^ti\tilde\sigma\lambda_{n-1}u^*_{n-1}(s)dw_{n-1}(s)
\end{multline}
\end{enumerate}
\begin{multline}\label{e:sabra_stoc_integ}
u_n(t)
=u_n^0
+\int_0^tia\lambda_nu^*_{n+1}(s)u_{n+2}(s)ds
+\int_0^tib\lambda_{n-1}u^*_{n-1}(s)u_{n+1}(s)ds\\
-\int_0^tic\lambda_{n-2}u_{n-1}(s)u_{n-2}(s)ds
-\int_0^t\frac{\tilde\sigma_1^2+\tilde\sigma_2^2}2(\lambda_n^2+\lambda_{n-1}^2)u_n(s)ds\\
+\int_0^ti\tilde\sigma_1\lambda_nu^*_{n+1}(s)dw_n(s)
-\int_0^ti\tilde\sigma_1\lambda_{n-1}u^*_{n-1}(s)dw_{n-1}(s)\\
+\int_0^t(i\tilde\sigma_2\lambda_nu^*_{n+1}(s)dw'_n(s))^*
-\int_0^ti\tilde\sigma_2\lambda_{n-1}u_{n-1}(s)dw'_{n-1}(s)
\end{multline}
\end{definition}

\begin{theorem}\label{thm:stoc_goy}
Given an initial condition $u^0\in l^2(\mathbb C)$, there exists a
Leray solution $(P,u)$ of the stochastic GOY system which is unique in
law. Moreover for all $t>0$,
\[
P(\|u(t)\|_{l^2}<\|u^0\|_{l^2})>0
\]
and for all $\epsilon>0$ there exists $t>0$ such that
\[
P(\|u(t)\|_{l^2}<\epsilon)>0
\]
Finally, if $\|u^0\|_{l^2}$ is sufficiently small, then for
$t\to\infty$, $u(t)$ converges to zero at least exponentially fast
both almost surely and in $L^2(\Omega;l^2(\mathbb C))$.
\end{theorem}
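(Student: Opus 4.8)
The plan is to exhibit the stochastic GOY system~\eqref{e:goy_stochastic} as a particular instance of the general model~\eqref{e:mainmodel}, identifying $\mathbb C$ with $\mathbb R^2$ (so $d=2$ and $\|u\|_{l^2(\mathbb C)}=\|X\|_H$), and then to read off the conclusions from the theorems already proved. The embedding step requires producing explicitly a finite set $I$ with a fixed-point-free involution $\tau$, the constants $k_i$, the integers $r_i,h_i$, the bilinear maps $B_i$ on $\mathbb R^2$ (which for GOY are, up to normalization, complex multiplications of conjugates), and an identification of the complex Brownian motions $(w_n)$ with a family $W$ symmetric with respect to $\tau$, in such a way that the finite-range, no-self-interaction, exponential-coefficient and local-conservativity requirements (in particular~\eqref{e:cancellations_k}--\eqref{e:alias_h}) hold, and that $L_i=B_iB_i^T$ is the identity for every $i\in I$; this last property is what makes the uniqueness and dissipation results applicable. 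One then checks via Proposition~\ref{prop:equivalencestratito} that the It\^o form~\eqref{e:itointmodel} of the general model reproduces~\eqref{e:goy_stoc_integ}, so that in particular the Stratonovich drift correction collapses to $-\frac{\tilde\sigma^2}2(\lambda_n^2+\lambda_{n-1}^2)u_n$; this also fixes how $\sigma$ relates to $\tilde\sigma$.

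Once the embedding is established, a process is a Leray solution of~\eqref{e:goy_stochastic} exactly when it is a Leray solution of~\eqref{e:itodiffmodel}. Existence of such a solution on $[0,\infty)$ follows from Corollary~\ref{cor:existence_nonlinear}, the solutions on the intervals $[0,T]$ being consistent in $T$ so that they glue on the canonical path space. Uniqueness in law follows from Theorem~\ref{thm:uniqueness_nonlinear}, which applies precisely because all the $L_i$ are the identity.

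For the dissipation statements, the auxiliary linear system inherits $L_i=\mathrm{Id}$, so Theorem~\ref{thm:Q_avg_energy_exp_down} applies. Strict monotonicity gives $\mathbb E^Q[\|X(t)\|^2]<\|x\|^2$ for every $t>0$, which combined with the Leray bound $\|X(t)\|\le\|x\|$ $Q$-a.s.\ forces $Q(\|X(t)\|<\|x\|)>0$; likewise $\mathbb E^Q[\|X(t)\|^2]\le Ce^{-\sigma^2 t/\mu}\to0$, so by Markov's inequality $Q(\|X(t)\|<\epsilon)>0$ for $t$ large. Since $P$ and $Q$ are equivalent on $\mathcal F_t$ for every finite $t$ (Propositions~\ref{prop:nonlinearsol_solveslinear} and~\ref{prop:linearsol_solvesnonlinear}, Remark~\ref{rem:P_Q_equivalence}), both inequalities pass to $P$, and since $u=X$ this yields the two positivity assertions. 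Finally, the last assertion is exactly Theorem~\ref{thm:expo_decr_nonlinear} with $x=u^0$: whenever $\|u^0\|_{l^2}$ is small enough that $\rho=\sqrt{\mu|I|}\,\|u^0\|_{l^2}/(2\sigma^2)<1$, one obtains $\limsup_{t\to\infty}\frac1t\log\|u(t)\|^2\le-\sigma^2/\mu$ $P$-a.s.\ and $\limsup_{t\to\infty}\frac1t\log\mathbb E^P\|u(t)\|^2\le-\frac{\sigma^2}\mu(1-\rho)^2<0$, i.e.\ exponential decay both almost surely and in $L^2(\Omega;l^2(\mathbb C))$.

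The hard part is the embedding. GOY's quadratic nonlinearity is naturally organized into energy-conserving triads $\{n,n+1,n+2\}$, while~\eqref{e:mainmodel} conserves energy pairwise; rewriting GOY in the pair form forces one to split the triads and, simultaneously, to track the complex conjugations carefully in the passage from $\mathbb C$ to $\mathbb R^2$. One must also verify that the particular (telescoping, two-term-per-equation) noise of~\eqref{e:goy_stochastic} is compatible with the local-conservativity relations, that it keeps every $L_i$ equal to the identity, and that it generates exactly the diagonal drift appearing in~\eqref{e:goy_stoc_integ}. Everything after that is a direct application of the results of Sections~\ref{sec:4-uniqueness}--\ref{sec:6-dissipation}.
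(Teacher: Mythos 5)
Your overall plan coincides with the paper's: realize~\eqref{e:goy_stochastic} as an instance of~\eqref{e:mainmodel} with $d=2$ and all $L_i=\mathrm{Id}$, then quote Corollary~\ref{cor:existence_nonlinear}, Theorem~\ref{thm:uniqueness_nonlinear}, Theorem~\ref{thm:Q_avg_energy_exp_down} and Theorem~\ref{thm:expo_decr_nonlinear}. The deductions you draw from those results (strict decrease of $\mathbb E^Q\|X(t)\|^2$ plus the Leray bound giving the first positivity statement, Markov's inequality giving the second, equivalence of $P$ and $Q$ on $\mathcal F_t$ transferring both to $P$, and the $\rho<1$ condition giving exponential decay) are exactly the paper's and are correct. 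The problem is that you explicitly defer the embedding --- which you yourself identify as ``the hard part'' --- and that embedding is essentially the entire content of the paper's proof. A list of properties that ``one must verify'' is not a verification, and several of those properties are not automatic.

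Concretely, three things are missing. First, the explicit data: the paper takes $I=\{1,2,3,4\}$, $\tau=(1\;3)(2\;4)$, a single bilinear map $B$ realizing $(v,z)\mapsto iv^*z^*$ with $\|B\|$ normalized so that $L=BB^T=\mathrm{Id}$, and the table $r=(1,-1,-1,1)$, $h=(2,-2,1,-1)$, $k=(\sqrt2 a,\ \sqrt2\lambda^{-2}c,\ -\sqrt2\lambda^{-1}a,\ -\sqrt2\lambda^{-1}c)$; note that the coefficient $b$ disappears only because $a+b+c=0$ lets the middle GOY term split as $-a-c$ across the two cancelling pairs, which is precisely how the three-term triad is reorganized into the pairwise-conservative form. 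Without exhibiting this one cannot check~\eqref{e:cancellations_k}--\eqref{e:alias_h}. Second, the identification of the noise: a single complex Brownian motion $w_n$ must be expressed as the specific linear combination~\eqref{e:assumption_w_W} of components of the two \emph{independent} motions $W_{1,n+2}$ and $W_{2,n-1}$, with normalization $\sqrt{a^2+\lambda^{-2}c^2}$; this is what makes the It\^o correction collapse to $-\frac{\tilde\sigma^2}2(\lambda_n^2+\lambda_{n-1}^2)u_n$ and fixes $\sigma=\tilde\sigma/\sqrt{a^2+\lambda^{-2}c^2}$, neither of which can simply be asserted. Third, and most easily overlooked: for uniqueness in law you must go in the opposite direction, reconstructing a $\tau$-symmetric family $W$ from a given sequence $w$. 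Since $w$ carries strictly less randomness than $W$, the paper has to adjoin an auxiliary independent sequence $\tilde w$ and apply a rotation to produce $W$; without this step Theorem~\ref{thm:uniqueness_nonlinear} does not apply to two arbitrary Leray solutions of~\eqref{e:goy_stochastic}, because they need not a priori come equipped with families $W$ of the required form. Until these constructions are written out and checked, the proof is a correct outline rather than a proof.
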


\begin{proof}
All we need to do is rewrite this model in the formalism of our
general model~\eqref{e:mainmodel}. Take $d=2$, let $\phi:\mathbb
C\to\mathbb R^2$ be the obvious isomorphism and let
$X_n:=\phi(u_n):=(\re(u_n),\im(u_n))$ and $x_n=\phi(u^0)$. 

The first step is to define a bilinear operator $B$ on $\mathbb R^2$
corresponding to $(v,z)\mapsto iv^*z^*$ on $\mathbb C$. For
$\alpha,\beta,\gamma\in\{1,2\}$, let
\[
B^{\alpha,\beta,\gamma}=\begin{cases}
1/\sqrt2 & \alpha+\beta+\gamma=4\\
-1/\sqrt2 & \alpha+\beta+\gamma=6\\
0 & \text{otherwise}
\end{cases}
\]
so that it is easy to chack that for any $v,z\in\mathbb C$,
$\phi(iv^*z^*)=\sqrt2B(\phi(v),\phi(z))$ and that
$L=L^{\alpha,\beta}=\sum_{\gamma,\delta}B^{\alpha,\gamma,\delta}B^{\beta,\gamma,\delta}$
is the identity.

The second step is to choose the interactions corresponding to the GOY
local range coupling. Since there are three terms, at least two pair
of interactions are needed. Let $I=\{1,2,3,4\}$, $\tau=(1\;3)(2\;4)$,
$I^*=\{1,2\}$ and for $i\in I$ let $B_i=B$ and
\[
\begin{array}{c|ccc}
i & r_i & h_i & k_i \\
\hline
1 & \phantom-1 & \phantom-2 & \phantom-\sqrt2a \\
2 & -1 & -2 & \phantom-\sqrt2\lambda^{-2}c \\
3 & -1 & \phantom-1 & -\sqrt2\lambda^{-1}a \\
4 & \phantom-1 & -1 & -\sqrt2\lambda^{-1}c
\end{array}
\]
It is now easy to check that if we apply $\phi$ to the sum of the
first three integrals appearing in the RHS of
equation~\eqref{e:goy_stoc_integ}, we simply get
\begin{equation}
\sum_{i\in I}\int_0^tk_{i,n}B_i(X_{n+r_i}(s),X_{n+h_i}(s))ds
\end{equation}

Finally, let $\sigma:=\tilde\sigma/\sqrt{a^2+\lambda^{-2}c^2}$, let
$W=(W_{i,n})_{i\in I,n\in\mathbb Z}$ be a family of 2-dimensional
Brownian motions symmetric with respect to $\tau$, let
$(w_n)_{n\in\mathbb Z}$ be a sequence of independent complex-valued
Brownian motions and suppose that the following equation holds for all
$n$,
\begin{equation}\label{e:assumption_w_W}
w_n=
\frac{aW_{1,n+2}^1-\lambda^{-1}cW_{2,n-1}^1}{\sqrt{a^2+\lambda^{-2}c^2}}
-i\frac{aW_{1,n+2}^2-\lambda^{-1}cW_{2,n-1}^2}{\sqrt{a^2+\lambda^{-2}c^2}}
\end{equation}
Then
\begin{multline*}
\int_0^ti\tilde\sigma\lambda_nu^*_{n+1}(s)dw_n(s)\\
=\int_0^ti\sigma\lambda_nu^*_{n+1}adW_{1,n+2}^1
-\int_0^ti\sigma\lambda_nu^*_{n+1}\lambda^{-1}cdW_{2,n-1}^1\\
-\int_0^ti\sigma\lambda_nu^*_{n+1}iadW_{1,n+2}^2
+\int_0^ti\sigma\lambda_nu^*_{n+1}i\lambda^{-1}cdW_{2,n-1}^2\\
=\int_0^ti\sigma a\lambda_nu^*_{n+1}(dW_{1,n+2}^1-idW_{1,n+2}^2)
-\int_0^ti\sigma\lambda^{-1}c\lambda_nu^*_{n+1}(dW_{2,n-1}^1-idW_{2,n-1}^2)
\end{multline*}
so that we can compute $\phi$ applied to the two stochastic integrals
appearing in the RHS of equation~\eqref{e:goy_stoc_integ}: we obtain
\begin{multline*}
\phi\biggl(\int_0^ti\tilde\sigma\lambda_nu^*_{n+1}(s)dw_n(s)\biggr)\\
=\int_0^ti\sigma\sqrt2a\lambda_nB(X_{n+1},dW_{1,n+2})
-\int_0^ti\sigma\sqrt2\lambda^{-1}c\lambda_nB(X_{n+1},dW_{2,n-1})\\
=\sum_{i=1,4}\int_0^t\sigma k_{i,n}B_i(X_{n+r_i},W_{i,n+h_i})
\end{multline*}
and analogously
\[
\phi\biggl(-\int_0^ti\tilde\sigma\lambda_{n-1}u^*_{n-1}(s)dw_{n-1}(s)\biggr)
=\sum_{i=2,3}\int_0^t\sigma k_{i,n}B_i(X_{n+r_i},dW_{i,n+h_i})
\]
The last term is
\begin{multline*}
\phi\biggl(
-\int_0^t\frac{\tilde\sigma^2}2(\lambda_n^2+\lambda_{n-1}^2)u_n(s)ds
\biggr)
=-\int_0^t\frac{\sigma^2}2(a^2+\lambda^{-2}c^2)(1+\lambda^{-2})\lambda^{2n} X_n(s)ds\\
=-\sum_{i\in I}\int_0^t\frac{\sigma^2}2k_{i,n}^2X_n(s)ds
\end{multline*}
We have proved that, under the assumption
that~\eqref{e:assumption_w_W} holds, if we apply $\phi$ to
equation~\eqref{e:goy_stoc_integ}, we get
equation~\eqref{e:itointmodel}. But of course, given
$(W_{i,n}^j)_{i\in I, j=1,2}$, then~\eqref{e:assumption_w_W} may be
taken as a definition of $w_n$, so existence of a Leray solution
follows from Corollary~\ref{cor:existence_nonlinear}. On the other
hand, given $w$, let $\tilde w$ be onother sequence of independent
complex-valued Brownian motions, independent from $w$. Then
\[
\begin{pmatrix}
W_{1,n+2}\\
W_{2,n-1}
\end{pmatrix}
:=\begin{pmatrix}
W_{3,n+2}\\
W_{4,n-1}
\end{pmatrix}
:=\frac1{\sqrt{a^2+\lambda^{-2}c^2}}
\begin{pmatrix}
a & -\lambda^{-1}c\\
\lambda^{-1}c & a
\end{pmatrix}\begin{pmatrix}
\re(w_n) & \im(w_n) \\
\re(\tilde w_n) & \im(\tilde w_n) \\
\end{pmatrix}
\]
defines the family $W$ according to the requirements and
to~\eqref{e:assumption_w_W}, so uniqueness in law of the Leray
solution follows from Theorem~\ref{thm:uniqueness_nonlinear}.

To prove the two inequalities, remember that $\|X(t)\|=\|u(t)\|_{l^2}$
and apply Theorem~\ref{thm:Q_avg_energy_exp_down}. Fix $t>0$. Since
$\mathbb E^Q(\|X(t)\|)<\|x\|$, then $Q(\|X(t)\|<\|x\|)>0$, so the same
holds for $P$ which is equivalent to $Q$ on $\mathcal F_t$.

Fix $\epsilon>0$. Since $\mathbb E^Q(\|X(t)\|)\to0$ as $t\to\infty$,
then for $t$ large enough $Q(\|X(t)\|>\epsilon)<1$, so the same holds
for $P$ which is equivalent to $Q$ on $\mathcal F_t$.

Finally, to prove the last statement, we apply
Theorem~\ref{thm:expo_decr_nonlinear}. If $\|x\|=\|u^0\|_{l^2}$ is
small enough, then $\rho<1$, so by~\eqref{e:P_as_energy_exp_down} we
get that $P$-a.s.~for all $\epsilon>0$, for $t$ large
\[
\|u(t)\|_{l^2}\leq e^{-\frac12(\frac{\sigma^2}\mu-\epsilon)t}
\]
and by~\eqref{e:P_avg_energy_exp_down} we get that for all
$\epsilon>0$, for $t$ large
\[
\|u(t)\|_{L^2(\Omega;l^2(\mathbb C))}^2=\mathbb E^P\|u(t)\|_{l^2}^2\leq e^{-\left(\frac{\sigma^2}\mu(1-\rho)^2-\epsilon\right)t}\qedhere
\]
\end{proof}

\begin{theorem}\label{thm:stoc_sabra}
Given an initial condition $u^0\in l^2(\mathbb C)$, there exists a
Leray solution $(P,u)$ of the stochastic Sabra system which is unique
in law. Moreover for all $t>0$,
\[
P(\|u(t)\|_{l^2}<\|u^0\|_{l^2})>0
\]
and for all $\epsilon>0$ there exists $t>0$ such that
\[
P(\|u(t)\|_{l^2}<\epsilon)>0
\]
Finally, if $\|u^0\|_{l^2}$ is sufficiently small, then for
$t\to\infty$, $u(t)$ converges to zero at least exponentially fast
both almost surely and in $L^2(\Omega;l^2(\mathbb C))$.
\end{theorem}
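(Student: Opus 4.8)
The plan is to mirror the proof of Theorem~\ref{thm:stoc_goy}: I recast the stochastic Sabra system~\eqref{e:sabra_stochastic} as an instance of the general model~\eqref{e:mainmodel} with $d=2$ and $L_i$ equal to the identity for every $i$, and then the four conclusions are read off from the general theory. Set $X_n=\phi(u_n)=(\re u_n,\im u_n)$, $x_n=\phi(u^0_n)$ with $\phi:\mathbb C\to\mathbb R^2$ the canonical isomorphism. Unlike GOY, whose nonlinearity uses only the symmetric product $(v,z)\mapsto iv^*z^*$, Sabra mixes three conjugation patterns: $(v,z)\mapsto iv^*z$ for the $a$- and $b$-terms, $(v,z)\mapsto -ivz$ for the $c$-term, and, through $\tau$-partnering, $(v,z)\mapsto iz^*v$. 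I would therefore introduce three bilinear maps on $\mathbb R^2$: $B$ with $\sqrt2\,B(\phi(v),\phi(z))=\phi(iv^*z)$; $B'$ with $\sqrt2\,B'(\phi(v),\phi(z))=\phi(-ivz)$; and $B''$, obtained from $B$ by exchanging its two arguments, so that $\sqrt2\,B''(\phi(v),\phi(z))=\phi(iz^*v)$. A short computation in coordinates gives $BB^T=B'(B')^T=B''(B'')^T=\mathrm{Id}$ together with the alias identities $\langle u,B(v,w)\rangle=\langle v,B(u,w)\rangle$ and $\langle u,B''(v,w)\rangle=\langle v,B'(u,w)\rangle$, so~\eqref{e:alias_B} holds for the pairings $(B,B)$ and $(B',B'')$.

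Next I would take $I=\{1,2,3,4\}$, $\tau=(1\;3)(2\;4)$, $I^*=\{1,2\}$, the same $r_i,h_i,k_i$ as in the GOY table, now with $B_1=B_3=B$, $B_2=B'$, $B_4=B''$:
\[
\begin{array}{c|cccc}
i & r_i & h_i & k_i & B_i\\
\hline
1 & \phantom-1 & \phantom-2 & \phantom-\sqrt2\,a & B\\
2 & -1 & -2 & \phantom-\sqrt2\,\lambda^{-2}c & B'\\
3 & -1 & \phantom-1 & -\sqrt2\,\lambda^{-1}a & B\\
4 & \phantom-1 & -1 & -\sqrt2\,\lambda^{-1}c & B''
\end{array}
\]
One checks that requirements \textit{i}--\textit{iv} hold (conditions~\eqref{e:cancellations_k}--\eqref{e:alias_h} are read straight off the table), and that $\phi$ carries the three deterministic integrals of~\eqref{e:sabra_stoc_integ} onto $\sum_{i\in I}\int_0^tk_{i,n}B_i(X_{n+r_i},X_{n+h_i})\,ds$; exactly as for GOY, the $b$-term reappears as the sum of the contributions of interactions $3$ and $4$, which is where $a+b+c=0$ is used. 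For the noise, let $\sigma$ be a positive constant --- a fixed multiple of $\tilde\sigma_1$, equivalently of $\tilde\sigma_2$, the two choices coinciding precisely because $\tilde\sigma_1/\tilde\sigma_2=\lambda a/c$ --- let $W=(W_{i,n})$ be a family of $2$-dimensional Brownian motions symmetric with respect to $\tau$, and identify $(w_n)$ with $(W_{1,\cdot})$ and $(w'_n)$ with $(W_{2,\cdot})$ by index-shifted realification relations (the analogue of~\eqref{e:assumption_w_W}; here no auxiliary noise is needed, the two complex families carrying exactly as many real degrees of freedom as $W_1,W_2$). A computation like that in the proof of Theorem~\ref{thm:stoc_goy} then shows that $\phi$ sends the four It\^o stochastic integrals of~\eqref{e:sabra_stoc_integ} to $\sum_i\int_0^t\sigma k_{i,n}B_i(X_{n+r_i},dW_{i,n+h_i})$ and its It\^o correction to $-\sum_i\int_0^t\frac{\sigma^2}2 k_{i,n}^2 X_n\,ds$; in particular the complex conjugate in the conjugated noise term of~\eqref{e:sabra_stoc_integ} is produced automatically by the argument-swapped operator $B''$, which conjugates whatever sits in its noise slot, and the It\^o correction matches because the $L_i$ are the identity and $\sum_i k_{i,n}^2$ is proportional to $\lambda_n^2+\lambda_{n-1}^2$. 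Thus $\phi$ maps~\eqref{e:sabra_stoc_integ} onto~\eqref{e:itointmodel}.

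From here every assertion follows as for GOY. Given $W$, the relations above serve as a definition of $w,w'$, so Corollary~\ref{cor:existence_nonlinear} yields a Leray solution; conversely, given $w,w'$ one recovers $W$, so uniqueness in law comes from Theorem~\ref{thm:uniqueness_nonlinear} (applicable since all $L_i$ are the identity). Passing to the associated linear system, Theorem~\ref{thm:Q_avg_energy_exp_down} gives $\mathbb E^Q\|X(t)\|<\|x\|$ and $\mathbb E^Q\|X(t)\|\to0$: the first forces $Q(\|u(t)\|_{l^2}<\|u^0\|_{l^2})>0$, the second forces, for $t$ large, $Q(\|u(t)\|_{l^2}>\epsilon)<1$, and both transfer to $P$ because $P$ and $Q$ are equivalent on $\mathcal F_t$. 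Finally, if $\|u^0\|_{l^2}$ is small enough that $\rho=\frac{\sqrt{\mu|I|}\,\|u^0\|}{2\sigma^2}<1$, then Theorem~\ref{thm:expo_decr_nonlinear}, through~\eqref{e:P_as_energy_exp_down} and~\eqref{e:P_avg_energy_exp_down}, gives the claimed almost-sure and $L^2(\Omega;l^2(\mathbb C))$ exponential decay.

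The main obstacle is entirely contained in the first two paragraphs: producing $I$, $\tau$ and the triple $(B,B',B'')$ so that \emph{all} of requirements \textit{i}--\textit{iv} hold simultaneously while faithfully reproducing the Sabra nonlinearity \emph{and} its four distinct noise terms --- in particular checking $B_iB_i^T=\mathrm{Id}$ and the two alias identities for the three operators $B,B',B''$, and verifying that a single $\sigma$ together with the index shifts above makes the noise match. This is precisely where the two structural relations $a+b+c=0$ and $\tilde\sigma_1/\tilde\sigma_2=\lambda a/c$ enter, and they suffice; dropping either would break the embedding. Everything past the embedding is a verbatim repetition of the GOY argument.
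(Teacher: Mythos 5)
Your proposal is correct and follows essentially the same route as the paper: the same index set, $\tau$, and coefficient table as for GOY, with $B_1=B_3$ realizing $(v,z)\mapsto iv^*z$, $B_2$ realizing $(v,z)\mapsto -ivz$, and $B_4$ the argument-swapped operator realizing $(v,z)\mapsto ivz^*$ (the paper writes these out in components, but they are the same maps), the identifications $W_{1,n}=W_{3,n}=\phi(w_{n-2})$, $W_{2,n}=W_{4,n}=\phi(w'_{n+1})$ with $\sigma=\tilde\sigma_1/a=\tilde\sigma_2/(\lambda^{-1}c)$, and the conclusion read off from Corollary~\ref{cor:existence_nonlinear}, Theorems~\ref{thm:uniqueness_nonlinear}, \ref{thm:Q_avg_energy_exp_down} and~\ref{thm:expo_decr_nonlinear} exactly as for GOY. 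Your observation that no auxiliary Brownian motion is needed here (unlike in the GOY reconstruction of $W$ from $w$) is also consistent with the paper's treatment.
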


\begin{proof}
We follow the same strategy as for Theorem~\ref{thm:stoc_goy}, so let
$d$, $\phi$, $X$, $x$, $I$, $\tau$, $r_i$, $h_i$ and $k_i$ be defined
like there. We need three new different bilinear operators on $\mathbb
R^2$ (below on the right) which represent the corresponding bilinear
operators on $\mathbb C$ associated to the interactions in the Sabra
model (below on the left)
\begin{align*}
(v,z)&\mapsto iv^*z &
B_1^{\alpha,\beta,\gamma}=B_3^{\alpha,\beta,\gamma}
&=\begin{cases}
0 & \alpha+\beta+\gamma\text{ odd}\\
-1/\sqrt2 & \alpha=1,\beta=1,\gamma=2\\
1/\sqrt2 & \text{otherwise}
\end{cases}\\
(v,z)&\mapsto-ivz &
B_2^{\alpha,\beta,\gamma}
&=\begin{cases}
0 & \alpha+\beta+\gamma\text{ odd}\\
-1/\sqrt2 & \alpha=2,\beta=1,\gamma=1\\
1/\sqrt2 & \text{otherwise}
\end{cases}\\
(v,z)&\mapsto ivz^* &
B_4^{\alpha,\beta,\gamma}
&=\begin{cases}
0 & \alpha+\beta+\gamma\text{ odd}\\
-1/\sqrt2 & \alpha=1,\beta=2,\gamma=1\\
1/\sqrt2 & \text{otherwise}
\end{cases}
\end{align*}
These $B_i$ satisfy~\eqref{e:alias_B} and the corresponding $L_i$'s are
the identity.

It is immediate to verify that if we apply $\phi$ to the sum of the
first three integrals appearing in the RHS of
equation~\eqref{e:sabra_stoc_integ}, we simply get
\begin{equation}
\sum_{i\in I}\int_0^tk_{i,n}B_i(X_{n+r_i}(s),X_{n+h_i}(s))ds
\end{equation}
Finally, let
$\sigma:=\tilde\sigma_1/a=\tilde\sigma_2/(\lambda^{-1}c)$, let
$(w_n)_{n\in\mathbb Z}$ and $(w'_n)_{n\in\mathbb Z}$ be two sequences
of independent complex-valued Brownian motions and let
$W=(W_{i,n})_{i\in I,n\in\mathbb Z}$ be a family of 2-dimensional
Brownian motions symmetric with respect to $\tau$ such that
$P$-a.s.~$W_{1,n}=W_{3,n}=\phi(w_{n-2})$ and
$W_{2,n}=W_{4,n}=\phi(w'_{n+1})$.

Then it also easy to verify that if we apply $\phi$ to the sum of the
four stochastic integrals appearing in the RHS of
equation~\eqref{e:sabra_stoc_integ}, we get
\begin{equation}
\sum_{i\in I}\int_0^t\sigma k_{i,n}B_i(X_{n+r_i}(s),dW_{i,n+h_i}(s))
\end{equation}
The last term is
\begin{multline*}
\phi\biggl(
-\int_0^t\frac{\tilde\sigma_1^2+\tilde\sigma_2^2}2(\lambda_n^2+\lambda_{n-1}^2)u_n(s)ds
\biggr)\\
=-\int_0^t\frac{\sigma^2}2(a^2+\lambda^{-2}c^2)(1+\lambda^{-2})\lambda^{2n} X_n(s)ds
=-\sum_{i\in I}\int_0^t\frac{\sigma^2}2k_{i,n}^2X_n(s)ds
\end{multline*}
We have proved that if we apply $\phi$ to
equation~\eqref{e:sabra_stoc_integ}, we get
equation~\eqref{e:itointmodel}. Then one concludes exactly like in
Theorem~\ref{thm:stoc_goy}.
\end{proof}

\begin{remark}
The smallness condition on $\|u^0\|_{l^2}$ can be made precise by
computing $\mu$ as in the proof of
Theorem~\ref{thm:Q_avg_energy_exp_down}. One has only to observe that
in this case the discrete-time embedded Markov chain $\zeta_k$ is a
simple random walk on the positive integers reflected in 1 and with
positive drift $\frac{\lambda^2-1}{\lambda^2+1}$ and do some
computations. We give only the result and notice that this choice of
$\mu$ is not believed to be optimal. For both the stochastic GOY and
Sabra models, the condition $\rho<1$ is equivalent to
\[
\|u^0\|_{l^2}<\sqrt2(\lambda-\lambda^{-1})\sqrt{a^2-\lambda^{-2}c^2}\,\sigma^2
\]
\end{remark}

\bibliographystyle{plain} \bibliography{dyadic}

\end{document}